\theoremstyle{plain}
\newtheorem*{theorem*}{Theorem}
\newtheorem{theorem}{Theorem}[section]
\newtheorem{lemma}[theorem]{Lemma}
\newtheorem{corollary}[theorem]{Corollary}
\newtheorem{proposition}[theorem]{Proposition}
\newtheorem{propdef}{Proposition-Definition}
\theoremstyle{definition}
\newtheorem{definition}[theorem]{Definition}
\newtheorem{remark}[theorem]{Remark}
\newtheorem{example}[theorem]{Example}
\begin{document}
\title[Equivariant coherent sheaves on the exotic nilpotent cone]{Equivariant coherent sheaves on the exotic nilpotent cone}
\author{Vinoth Nandakumar}
\maketitle

\begin{abstract}
Let $G=Sp_{2n}(\mathbb{C})$, and $\mathfrak{N}$ be Kato's exotic nilpotent cone. Following techniques used by Bezrukavnikov in \cite{bezrukavnikov} to establish a bijection between $\Lambda^+$, the dominant weights for an arbitrary simple algebraic group $H$, and $\textbf{O}$, the set of pairs consisting of a nilpotent orbit and a finite-dimensional irreducible representation of the isotropy group of the orbit, we prove an analogous statement for the exotic nilpotent cone. First we prove that dominant line bundles on the exotic Springer resolution $\widetilde{\mathfrak{N}}$ have vanishing higher cohomology, and compute their global sections using techniques of Broer. This allows to show that the direct images of these dominant line bundles constitute a quasi-exceptional set generating the category $D^b(Coh^G(\mathfrak{N}))$, and deduce that the resulting $t$-structure on $D^b(Coh^G(\mathfrak{N}))$ coincides with the perverse coherent $t$-structure. The desired result now follows from the bijection between costandard objects and simple objects in the heart of the $t$-structure on $D^b(Coh^G(\mathfrak{N}))$.
\end{abstract}
\tableofcontents
\section{Introduction}

Let $H$ be a simple algebraic group; denote by $\mathfrak{h}$ its Lie algebra and $\mathcal{N} \subset \mathfrak{h}$ its nilpotent cone. Let $\Lambda^+$ denote the set of dominant weights for $H$, and $\textbf{O}$ denote the set of pairs $(\mathcal{O}, L)$, where $\mathcal{O}$ is a $H$-orbit on $\mathcal{N}$, and $L$ is a finite-dimensional irreducible representation of the isotropy group $G^x$ of the orbit $\mathcal{O}$, where $x \in \mathcal{O}$. Using geometric methods, in \cite{bezrukavnikov}, Bezrukavnikov shows that there is a canonical bijection between $\Lambda^+$ and $\textbf{O}$ (a result that was previously conjectured by Lusztig and Vogan).

Now let $G=Sp_{2n}(\mathbb{C})$, and $\mathfrak{N}$ be Kato's exotic nilpotent cone, defined as follows:
\begin{align*} \mathfrak{N} := \mathbb{C}^{2n} \times \{ x \in \text{End}(\mathbb{C}^{2n}) \text{ } | \text{ } x \text{ nilpotent}, \langle xv,v \rangle = 0 \text{ } \forall \text{ } v \in \mathbb{C}^{2n} \} \end{align*}
The main purpose is to establish an exotic analogue of Bezrukavnikov's bijection, i.e. a bijection between $\Lambda^+$, the dominant weights for $G$, and $\mathbb{O}$, the set of pairs $(\mathcal{O}, L)$, where $\mathcal{O}$ is a $G$-orbit on $\mathfrak{N}$, and $L$ is a finite-dimensional irreducible representation of the isotropy group $G^{(v,x)}$ of the orbit $\mathcal{O}$, where $(v,x) \in \mathcal{O}$.

The exotic nilpotent cone was originally introduced by Kato in \cite{kato1} to study multi-parameter affine Hecke algebras, via the equivariant K-theory of the exotic Steinberg variety (following techiques used by Kazhdan, Lusztig and Ginzburg in the case of one-parameter affine Hecke algebras). The $G$-orbits on $\mathfrak{N}$ are proven to be in bijection with $\mathcal{Q}_n$, the set of bi-partitions of $n$ (and thus also with irreducible representations of the type $C$ Weyl group). In \cite{kato2}, Kato explicitly realizes this bijection via an exotic Springer correspondence; this correspondence is somewhat cleaner than the type $C$ Springer correspondence since there are no non-trivial local systems on $G$-orbits in $\mathfrak{N}$.

In \cite{enhanced}, Achar and Henderson make precise conjectures describing the intersection cohomology of orbit closures in the exotic nilpotent cone. The work of Achar, Henderson and Sommers in \cite{special} studies special pieces for $\mathfrak{N}$, which turn out to have the same number of $\mathbb{F}_q$ points as Lusztig's special pieces for the ordinary nilpotent cone. These results all demonstrate a strong connection between the exotic nilpotent cone and the ordinary nilpotent cone of type $C$; the present work draws another parallel between the geometry of the exotic nilpotent cone $\mathfrak{N}$ and the geometry of the ordinary nilpotent cone $\mathcal{N}$ of type $C$. 

Below we describe the main results of paper in more detail. 

\textbf{Section 2}: After recalling some of the basic properties of the exotic nilpotent cone, and some results of Achar, Henderson and Sommers on resolutions of special orbit closures, here we study the cohomology of dominant line bundles on the exotic Springer resolution $\widetilde{\mathfrak{N}}$. We closely follow the methods developed by Broer in \cite{broer1}, \cite{broer2} that prove analogous results in the case of the ordinary nilpotent cone $\mathcal{N}$. Defining $\mathcal{O}_{\widetilde{\mathcal{N}}}(\lambda) := p^* \mathcal{O}_{G/B}(\lambda)$ where $p: \widetilde{\mathfrak{N}} \rightarrow G/B$ is the projection, we first prove that $H^i(\widetilde{\mathcal{N}},\mathcal{O}_{\widetilde{\mathcal{N}}}(\lambda))=0$ using a theorem of Grauert-Riemenschneider. By using the additivity of the Euler characteristic along with Borel-Weil-Bott, we then compute the structure $H^0(\widetilde{\mathcal{N}},\mathcal{O}_{\widetilde{\mathcal{N}}}(\lambda))$ as a $G$-module.

\textbf{Section 3}: First we recall the theory of quasi-exceptional sets (following Section $2$ of \cite{bezrukavnikov}). Given a triangulated category $\mathcal{C}$, two ordered set of objects $\nabla=\{ \nabla_i | i \in I \}, \triangle=\{ \triangle_i | i \in I \}$ are said to constitute a dualizable quasi-exceptional set if they satisfy certain conditions. From a quasi-exceptional set generating a category $\mathcal{C}$, one obtains a $t$-structure on $\mathcal{C}$ whose heart is a quasi-hereditary category with simple objects also indexed by $I$. Letting $\mathcal{C} = D^b(Coh^G(\mathfrak{N}))$, define $\nabla_{\lambda} := R\pi_* \mathcal{O}_{\widetilde{\mathfrak{N}}}(\lambda)[\frac{\text{dim}(\mathfrak{N})}{2}]$ where $\pi:  \widetilde{\mathfrak{N}} \rightarrow \mathfrak{N}$ is the resolution of singularities. Then the main result is that $\nabla=\{ \nabla_{\lambda} | \lambda \in \Lambda^+ \} $ and $\triangle=\{ \nabla_{w_0 \cdot \lambda} | \lambda \in \Lambda^+ \}$ consitute a quasi-exceptional set generating $D^b(Coh^G(\mathfrak{N}))$. The results of Section $2$ are needed to prove this claim.

\textbf{Section 4}: After recalling some of the theory of perverse coherent sheaves (developed by Deligne and Bezrukavnikov), in this section we compare the $t$-structure constructed on $D^b(Coh^G(\mathfrak{N}))$ with the perverse coherent $t$-structure corresponding to the middle perversity. We first prove that $\nabla_{\lambda}$ is a perverse coherent sheaf for all $\lambda \in \Lambda$ (this essentially follows from the fact that $\pi: \widetilde{\mathfrak{N}} \rightarrow \mathfrak{N}$ is semi-small). We then deduce quasi-exceptional $t$-structure coincides with the perverse coherent $t$-structure. Since the simple perverse coherent sheaves are indexed by $\mathbb{O}$, the bijection between simple objects and co-standard objects in the heart of the $t$-structure on $D^b(Coh^G(\mathfrak{N}))$ gives the required bijection $\mathbb{O} \leftrightarrow \Lambda^+$.

We remark that there are a few differences between the proof of the bijection $\textbf{O} \leftrightarrow \Lambda^+$ for $\mathcal{N}$, and the bijection $\mathbb{O} \leftrightarrow \Lambda^+$ for $\mathfrak{N}$. While the canonical line bundle on the Springer resolution $\widetilde{\mathcal{N}}$ is trivial, the canonical line bundle on the exotic Springer resolution $\widetilde{\mathfrak{N}}$ is the anti-dominant line bundle $\mathcal{O}_{\widetilde{\mathfrak{N}}}(-\epsilon_1-\cdots-\epsilon_n)$. As a result, for the proofs in Section $3$, we use the twisted Weyl group action on $\Lambda$, $w \cdot \lambda = w(\lambda+\theta)-\theta$ with $\theta=\frac{1}{2}(\epsilon_1+\cdots+\epsilon_n)$ instead of the ordinary action. Second, in the case of the ordinary nilpotent cone $\mathcal{N}$, the proof that the image of the functor $R\pi_*: D^b(Coh^H(\widetilde{\mathcal{N}})) \rightarrow D^b(Coh^H(\mathcal{N}))$ generates $D^b(Coh^H(\mathcal{N}))$ (see Lemma $7$ in \cite{bezrukavnikov}) uses the Jacobson-Morozov resolution of orbit closures in $\mathcal{N}$. In the case of $\mathfrak{N}$, the resolutions of an arbitrary orbit closure $\overline{\mathbb{O}_{\mu, \nu}}$ are not yet known. However, resolutions of special orbit closures have been recently developed in \cite{special}, and these are sufficient to prove the corresponding result in the case of the exotic nilpotent cone. It seems possible that there is an easier proof of this result without using the results of \cite{special}.

\textbf{Acknowledgements.} I would like to thank my advisor Roman Bezrukavnikov for numerous helpful discussions and insights, and Pramod Achar who suggested to me the existence of an exotic analogue of the bijection between $\Lambda^+$ and $\textbf{O}$. I am also grateful to Anthony Henderson, David Vogan, and Gus Lehrer for helpful comments; and Dan Roozemund for help with Magma computations. I would also like to thank the University of Sydney for their hospitality, where part of this work was completed.

\section{Geometry of the exotic nilpotent cone}
\subsection{Recollections}

Here we recall the definition and basic properties of Kato's exotic nilpotent cone, following \cite{kato1} and Section $6$ of \cite{enhanced}. 

\begin{definition} Let $\mathbb{V}$ be a $2n$-dimensional $\mathbb{C}$-vector space with a symplectic form $\langle \cdot , \cdot \rangle$, $G = Sp(\mathbb{V})$ be the associated symplectic group, and $\mathfrak{g} = \mathfrak{sp}(\mathbb{V})$ be the associated symplectic Lie algebra. Define $\mathfrak{s} \subset \mathfrak{gl}(\mathbb{V})$ as below (note $\mathfrak{s} \oplus \mathfrak{sp}(\mathbb{V}) = \mathfrak{gl}(\mathbb{V})$):
$$ \mathfrak{s} = \{ x \in \text{End}(\mathbb{V}) | \langle xv,v \rangle = 0 \text{ } \forall \text{ } v \in \mathbb{V} \} $$ \end{definition}
If $\mathcal{N}$ denotes the set of nilpotent endomorphims in $\mathfrak{gl}(\mathbb{V})$, we define the \textbf{exotic nilpotent cone} to be $\mathfrak{N} = \mathbb{V} \times ( \mathfrak{s} \cap \mathcal{N})$. 

\begin{proposition} The orbits of $G$ on $\mathfrak{N}$ are in bijection with the poset $\mathcal{Q}_n$ of bipartitions of $n$. Under this correspondence, given $(\mu, \nu) \in \mathcal{Q}_n$, the orbit $\mathbb{O}_{\mu, \nu}$ consists of $(v,x) \in \mathfrak{N}$ such that the Jordan type of $x$ acting on the subspace $E^{x}v \subset \mathbb{V}$ is $\mu \cup \mu$, and the Jordan type of $x$ acting on the quotient $V/E^{x}v$ is $\nu \cup \nu$. Here $E^{x}$ denotes the centralizer of $x$ in $End(\mathbb{V})$, and for a partition $\lambda = (\lambda_1, \lambda_2, \cdots )$, $\lambda \cup \lambda$ denotes the partition $(\lambda_1, \lambda_1, \lambda_2, \lambda_2, \cdots )$. The closure ordering for $G$-orbits on $\mathfrak{N}$ corresponds to the natural ordering on $\mathcal{Q}_n$ (i.e. $(\mu, \nu) \geq (\mu', \nu')$ if $\sum_{1 \leq i \leq j} (\mu_i+\nu_i) \geq \sum_{1 \leq i \leq j} (\mu'_i + \nu'_i)$ and $\mu_{j+1} + \sum_{1 \leq i \leq j} (\mu_i+\nu_i) \geq \mu'_{j+1} + \sum_{1 \leq i \leq j} (\mu'_i + \nu'_i)$). \end{proposition}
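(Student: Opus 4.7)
The plan is to construct a $G$-invariant map $\Phi \colon \mathfrak{N} \to \mathcal{Q}_n$ sending $(v,x)$ to the bipartition $(\mu,\nu)$ of the statement, to show $\Phi$ is surjective, and to verify that its fibers are exactly the $G$-orbits; the closure ordering is then established by a separate semi-continuity argument. That $\Phi$ is $G$-invariant is formal: under $g\cdot(v,x)=(gv,gxg^{-1})$ one has $E^{gxg^{-1}}=gE^x g^{-1}$ and hence $E^{gxg^{-1}}(gv)=g(E^x v)$, so the $x$-module isomorphism types of $E^x v$ and $\mathbb{V}/E^x v$ depend only on the $G$-orbit of $(v,x)$. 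Both subspaces are $x$-stable since $x\in E^x$.

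The first substantive step is to prove that the Jordan types really have the ``doubled'' form $\mu\cup\mu$ and $\nu\cup\nu$, the $n$ (rather than $2n$) appearing in $\mathcal{Q}_n$ being a consequence of this doubling. Polarizing $\langle xv,v\rangle=0$ gives $\langle xu,w\rangle=\langle u,xw\rangle$, so every $x\in\mathfrak{s}$ is self-adjoint with respect to the symplectic form. This self-adjointness forces $(E^x v)^\perp$ to be $x$-stable and induces a perfect $x$-equivariant pairing between $E^x v$ and $\mathbb{V}/(E^x v)^\perp$, which together with the $E^x$-cyclic generation of $E^x v$ by $v$ forces the Jordan decomposition of $x|_{E^x v}$ to group into mutually dual pairs of equal size, yielding $\mu\cup\mu$; the analogous argument for the quotient yields $\nu\cup\nu$. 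For surjectivity, I would exhibit for each $(\mu,\nu)\in\mathcal{Q}_n$ a standard representative by decomposing $\mathbb{V}=\bigoplus_i H_{\mu_i}\oplus\bigoplus_j H_{\nu_j}$, where $H_k\cong\mathbb{C}^{2k}$ is a symplectic space carrying a self-adjoint nilpotent of Jordan type $(k,k)$, and taking $v$ to be a cyclic generator of one of the two Jordan blocks inside each $H_{\mu_i}$ while placing $v$ trivially on each $H_{\nu_j}$; direct computation then gives $E^x v=\bigoplus_i H_{\mu_i}$ and $\Phi(v,x)=(\mu,\nu)$.

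To show that fibers of $\Phi$ are single orbits, I would argue that any $(v',x')$ with $\Phi(v',x')=(\mu,\nu)$ can be conjugated into the standard form by an element of $G$, either by a direct symplectic normal-form argument carried out Jordan-block by Jordan-block, or by comparing $\dim\mathbb{O}_{\mu,\nu}=\dim G - \dim G^{(v,x)}$ to $\dim\mathfrak{N}$ via explicit calculation of the centralizer at the standard point. The closure ordering then follows from Gerstenhaber-type upper-semicontinuity of Jordan type in flat families, applied jointly to $x|_{E^x v}$, to $x|_{\mathbb{V}/E^x v}$, and to the position of $v$ within the iterated subspaces $x^k (E^x v)$; the two inequalities in the statement track two distinct degeneration modes --- merging of Jordan parts of the combined nilpotent, and $v$ leaking out of $E^x v$ into the quotient. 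The main obstacle, in my view, will be rigorously establishing the doubling step; the cleanest route is to identify $\mathfrak{N}$ with a subvariety of a symplectic variant of the Achar--Henderson enhanced nilpotent cone of \cite{enhanced} and import their orbit classification directly.
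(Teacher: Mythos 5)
The paper does not prove this proposition; it simply cites Theorem~6.1, Corollary~2.9, and Theorem~6.3 of \cite{enhanced}, together with \cite{kato1} and Theorem~1 of \cite{mirabolic}. Your fallback plan (``import their orbit classification directly'') is therefore exactly what the paper does, and is perfectly acceptable. What I want to flag is that the direct argument you sketch for the doubling step does not work as stated, so if you wanted to avoid the citation you would need a genuinely different mechanism. Your reasoning is: $x$ self-adjoint gives a perfect $x$-equivariant pairing between $W := E^x v$ and $\mathbb{V}/W^\perp$, and this together with cyclicity ``forces the Jordan decomposition of $x|_W$ to group into mutually dual pairs.'' But that pairing only tells you that $x|_W$ and $x|_{\mathbb{V}/W^\perp}$ have the \emph{same} Jordan type; it says nothing about an internal self-duality of $W$. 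The self-duality you want would follow if the symplectic form restricted to $W$ were nondegenerate, but that fails: take $\mathbb{V}=\mathbb{C}^4$ with basis $e_1,e_2,f_1,f_2$, $\langle e_i,f_j\rangle=\delta_{ij}$, $x$ sending $e_2\mapsto e_1\mapsto 0$ and $f_1\mapsto f_2\mapsto 0$ (one checks $x\in\mathfrak{s}$), and $v=e_1$. Then $E^x v=\operatorname{span}\{e_1,f_2\}$ is Lagrangian, so the form on $W$ vanishes identically; the Jordan type $(1,1)=\,(1)\cup(1)$ is nevertheless doubled, but not by any argument internal to $(W,\langle\,,\,\rangle|_W)$. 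The actual proof in \cite{enhanced} proceeds by first solving the $GL$-orbit problem on the enhanced nilpotent cone and then analysing which $GL$-orbits meet $\mathfrak{N}$ and how they split, which is a different and substantially longer route than what you have sketched.

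Two smaller points. In your surjectivity construction, ``taking $v$ to be a cyclic generator of one of the two Jordan blocks inside each $H_{\mu_i}$'' does not in general give $E^x v=\bigoplus_i H_{\mu_i}$: if $\mu_i=\mu_j$ then $E^x$ contains maps that shuffle $H_{\mu_i}$ and $H_{\nu_j}$ when $\mu_i$ and $\nu_j$ have compatible sizes, so the claimed identification needs care (the normal form in \cite{enhanced} is chosen precisely to avoid this issue). And for the closure order, ``Gerstenhaber-type semicontinuity applied jointly'' is the right heuristic, but the specific pair of inequalities in the statement (one with $\mu_{j+1}$ added and one without) is not a formal consequence of semicontinuity of the two Jordan types separately; it encodes the interaction between them, and establishing it from scratch is the content of Theorem~6.3 of \cite{enhanced}. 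So the honest version of your proposal is: reduce to and cite the $GL$ enhanced classification, which is what the paper (implicitly) does.
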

\begin{proof} We refer the reader to Theorem $6.1$ of \cite{enhanced} and \cite{kato1} for the statement regarding orbits; see also Corollary $2.9$ in \cite{enhanced} and Theorem $1$ in \cite{mirabolic}. For the statement regarding orbit closures, see Theorem $6.3$ in \cite{enhanced}. \end{proof}
After fixing a Cartan subgroup $T \subset G$ and a Borel subgroup $B \subset G$,  let $(\mathbb{V} \oplus \mathfrak{s})^+ \subset \mathfrak{N}$ denote the sum of the strictly positive weight spaces in the $G$-module $\mathbb{V} \oplus \mathfrak{s}$; note that it is a $B$-module. 
\begin{definition} Let $\widetilde{\mathfrak{N}} = G \times_B (\mathbb{V} \oplus \mathfrak{s})^+$, and let $\pi: \widetilde{\mathfrak{N}} \rightarrow \mathfrak{N}$ be the map given by $\pi(g,(v,s))= (gv, gsg^{-1})$. Then $\pi$ is a resolution of singularities; accordingly we call $\widetilde{\mathfrak{N}}$ the exotic Springer resolution. \end{definition}

\subsection{Resolutions of special orbit closures and some subvarieties of $\widetilde{\mathfrak{N}}$}

In this section, for each orbit $\mathbb{O}_{\mu, \nu}$ (with $(\mu, \nu) \in \mathcal{Q}_n$), using resolutions of ``special" orbit closures constructed in \cite{special}, we will construct a subvariety $\widehat{\mathbb{O}_{\mu, \nu}} \subset \widetilde{\mathfrak{N}}$ with a map $\pi_{\mu,\nu}: \widehat{\mathbb{O}_{\mu, \nu}} \rightarrow \mathbb{O}_{\mu, \nu}$ whose fibres are acyclic. The need for this construction is to prove Lemma \ref{image} (an exotic analogue of Lemma $7$ from \cite{bezrukavnikov}). We start by recalling Achar-Henderson-Sommer's construction of resolutions for ``special" orbit closures from \cite{special}; for more details see Sections $2$ and $5$ of \cite{special}.

\begin{remark} We will use $C$-distinguished partitions in the following; alternatively one may use $B$-distinguished partitions (see \cite{special}) instead. \end{remark}

\begin{propdef} Let $\mathcal{Q}_n^{C} \subset \mathcal{Q}_n$ denote the subposet consisting of bi-partitions $(\mu, \nu)$ satisfying $\mu_i \geq \nu_i - 1, \nu_i \geq \mu_{i+1}-1$. If $\mathcal{P}_{2n}$ is the poset of partitions of $2n$ with the natural order, let $\mathcal{P}_{2n}^C \subset \mathcal{Q}_n$ denote the subposet consisting of partitions where each odd part occurs with even multiplicity. Define the map $\Phi^C: \mathcal{Q}_n \rightarrow \mathcal{P}_{2n}^C$ by sending the bi-partition $(\mu, \nu)$ to the partition obtained from the composition $(2\mu_1, 2\nu_1, 2\mu_2, 2\nu_2, \cdots )$ by replacing successive terms $(2s,2t)$  with $(s+t, s+t)$ if $s<t$. Define also the map $\widehat{\Phi}^C: \mathcal{P}_{2n}^C \rightarrow \mathcal{Q}_n^C$: given $\lambda \in \mathcal{P}_{2n}^C$, obtain the composition $\lambda'$ by first halving any even parts, and replacing any string of odd parts $(2k+1, \cdots , 2k+1)$ by $(k, k+1, \cdots , k, k+1)$; then let $\widehat{\Phi}^C(\lambda') = (\mu, \nu)$ where $\mu = (\lambda'_1, \lambda'_3, \cdots), \nu = (\lambda'_2, \lambda'_4, \cdots)$. Then the maps $\widehat{\Phi}^C$ and $\Phi^C|_{\mathcal{Q}_n^C}$ give an isomorphism of posets $\mathcal{Q}_n^C \simeq \mathcal{P}_{2n}^C$. For $(\mu, \nu) \in \mathcal{Q}_n$, denote $(\mu, \nu)^C = \widehat{\Phi}^C({\Phi}^C(\mu, \nu))$. \end{propdef}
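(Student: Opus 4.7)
The plan is to establish the isomorphism in four steps: verify that $\Phi^C|_{\mathcal{Q}_n^C}$ indeed lands in $\mathcal{P}_{2n}^C$, verify that $\widehat{\Phi}^C$ lands in $\mathcal{Q}_n^C$, check that the two maps are mutually inverse, and finally check that the bijection preserves the partial orders.

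For well-definedness of $\Phi^C|_{\mathcal{Q}_n^C}$, the interlacing conditions $\mu_i \geq \nu_i-1$ and $\nu_i \geq \mu_{i+1}-1$ translate into the statement that the composition $c = (2\mu_1, 2\nu_1, 2\mu_2, 2\nu_2, \ldots)$ satisfies $c_{j+1} - c_j \leq 2$ for every $j$. Consequently each increasing adjacent pair of $c$ has the form $(2s, 2s+2)$ and is replaced by $(2s+1, 2s+1)$, introducing exactly a pair of odd entries; the separate monotonicity of $\mu$ and $\nu$ rules out three strictly increasing consecutive entries in $c$, so distinct replacement pairs never overlap. A short case analysis on the parity of the replacement position confirms the non-increasing property is preserved, so $\Phi^C(\mu,\nu)$ is a partition of $2n$ whose odd parts come in pairs, hence lies in $\mathcal{P}_{2n}^C$. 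For $\widehat{\Phi}^C$, both moves (halving each even part, and replacing each maximal odd run $(2k+1, \ldots, 2k+1)$ of length $2m$ by $(k, k+1, \ldots, k, k+1)$) halve the local contribution to $|\lambda|$, so $\lambda'$ is a composition of $n$; a case check on the four possible adjacencies in $\lambda$ (even/even, even/odd-run, odd-run/even, and odd-run of one value meeting an odd-run of smaller value) shows $\lambda'_{j+1} \leq \lambda'_j + 1$ everywhere, which is exactly the interlacing condition for $\mu_i := \lambda'_{2i-1}$ and $\nu_i := \lambda'_{2i}$ to define a bipartition in $\mathcal{Q}_n^C$. Separate monotonicity of $\mu$ and $\nu$ follows from the same adjacency analysis applied to positions two apart.

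For the mutual inversion, I would trace the algorithms directly. Starting from $(\mu, \nu) \in \mathcal{Q}_n^C$, a maximal block of consecutive tight interlacings yields, under $\Phi^C$, a maximal run of equal odd parts in $\Phi^C(\mu, \nu)$, while positions with slack interlacing produce halved even parts. The map $\widehat{\Phi}^C$ reverses both of these local moves: doubling undoes halving, and splitting an odd run into the alternating pattern $(s, s+1, \ldots)$ restores precisely the alternating $\mu$- and $\nu$-values in the original tight block. The composition in the reverse order is verified symmetrically.

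The main obstacle will be the last step, order preservation. I would first observe that the partial order on $\mathcal{Q}_n$ described in the previous proposition is equivalent to the dominance order on the composition $c$, since the two inequalities at index $j$ compare partial sums of $c$ through positions $2j$ and $2j+1$. Similarly the dominance order on $\lambda = \Phi^C(\mu, \nu)$ compares partial sums of $\lambda$. The replacement $(2s, 2s+2) \mapsto (2s+1, 2s+1)$ shifts the partial sum by $+1$ at the first of each replaced pair and leaves it unchanged at the second, so $\sum_{k \leq m} \lambda_k = \sum_{k \leq m} c_k + \varepsilon(m)$ where $\varepsilon(m) \in \{0,1\}$ records whether $m$ starts a replacement pair. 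The hard step is to show that if $c \leq c'$ in dominance with equality of partial sums at some position $m$, then $\varepsilon(m) = \varepsilon'(m)$, so that the $\pm 1$ corrections cannot flip any inequality. The problematic scenario $\varepsilon(m) = 1$, $\varepsilon'(m) = 0$ forces $c_{m+1} = c_m + 2$ and $c'_{m+1} \leq c'_m$, and combining this with the equality at position $m$ and the dominance inequality at position $m+1$ produces a contradiction. A symmetric argument handles the converse direction, completing the proof.
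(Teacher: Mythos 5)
Your proof is essentially correct, but it takes a genuinely different route from the paper: the paper offers no argument at all for this Proposition--Definition, simply recalling the constructions and the poset isomorphism from Sections 2 and 5 of Achar--Henderson--Sommers \cite{special}. What you supply is a self-contained elementary verification, and its ingredients are sound. The reduction of the interlacing conditions to ``no adjacent increase in $c=(2\mu_1,2\nu_1,\dots)$ exceeds $2$'', the observation that separate monotonicity of $\mu$ and $\nu$ forbids overlapping replacement pairs, and the identification of tight interlacings with maximal odd runs (which is what makes the two algorithms mutually inverse) are all correct and are the right local statements to isolate. Your key idea for order preservation is also the right one: the partial sums of $\lambda=\Phi^C(\mu,\nu)$ and of $c$ differ by $\varepsilon(m)\in\{0,1\}$, the partial sums of $c$ are even so a strict dominance inequality can never be overturned by the $\pm 1$ correction, and only the equality case needs care. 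One small point to tighten there: in the problematic scenario ($\Sigma_c(m)=\Sigma_{c'}(m)$ with $\varepsilon(m)=1$, $\varepsilon'(m)=0$, assuming $c\le c'$) the contradiction requires the dominance inequality at position $m-1$ as well as at $m+1$: from $m-1$ and the equality at $m$ you get $c_m\ge c'_m$, from $m+1$ you get $c_{m+1}\le c'_{m+1}$, and then $c'_{m+1}\ge c_{m+1}=c_m+2\ge c'_m+2$ contradicts $\varepsilon'(m)=0$. You cite only the inequality at $m+1$, which alone does not suffice. With that supplied, the argument is complete, and it has the merit of making explicit a verification that both this paper and, to a large extent, the literature treat as routine.
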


\begin{definition} \label{filt} Let $(\mu, \nu) \in \mathcal{Q}_n, \lambda =\Phi^C(\mu, \nu)$. A $\lambda$-filtration (which we also refer to as a $(\mu, \nu)$-filtration) of $\mathbb{V}$ is a sequence of subspaces $(\mathbb{V}_{\geq a})_{a \in \mathbb{Z}}$, satisfying the following: \begin{itemize} \item $\mathbb{V}_{\geq a} \subseteq \mathbb{V}_{\geq a-1}$ \item $\mathbb{V}_{\geq 1-a}^{\perp} = \mathbb{V}_{\geq a}$  \item $\text{dim} \mathbb{V}_{\geq a} = \sum_{i \geq 1} \text{max}(\lceil \frac{\lambda_i -a}{2}\rceil ,0) := \lambda_a$ for $a \geq 1$ \end{itemize} For $(v,x) \in \mathfrak{N}$, say that a $\lambda$-filtration is ``$C$-adapted'' to $(v,x)$ if $v \in \mathbb{V}_{\geq 1}$ and $x(\mathbb{V}_{\geq a}) \subseteq \mathbb{V}_{\geq a+2}$ for all $a$. \end{definition}

\begin{proposition} If $(v,x) \in \mathbb{O}_{\mu, \nu}$, then there is a unique $(\mu, \nu)$-filtration $(\mathbb{V}_{\geq a})_{a \in \mathbb{Z}}$ that is $C$-adapted to $(v,x)$. \end{proposition}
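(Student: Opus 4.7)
The plan is to construct the filtration from an exotic Jacobson--Morozov cocharacter attached to $(v,x)$ and to derive uniqueness from the rigidity of the prescribed dimensions together with the $C$-adaptedness constraints.

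For existence, I would attach to $(v,x) \in \mathbb{O}_{\mu,\nu}$ a one-parameter subgroup $\phi: \mathbb{C}^\times \to G$ satisfying $\operatorname{Ad}(\phi(t))(x) = t^2 x$ and $\phi(t) v = tv$. This exotic analogue of the classical Jacobson--Morozov homomorphism is most easily exhibited on a Kato normal-form representative of $\mathbb{O}_{\mu,\nu}$, where the block structure of $x$ and the placement of $v$ make $\phi$ manifest; transporting by $G$-conjugation yields $\phi$ for any element of the orbit. Setting $\mathbb{V}_{\geq a} := \bigoplus_{b \geq a} \mathbb{V}(b)$ where $\mathbb{V}(b)$ is the $b$-th $\phi$-weight space, the nesting axiom, the inclusion $v \in \mathbb{V}(1) \subseteq \mathbb{V}_{\geq 1}$, and the shift $x(\mathbb{V}_{\geq a}) \subseteq \mathbb{V}_{\geq a+2}$ are built into the construction. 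Since $\phi(\mathbb{C}^\times) \subset G = Sp(\mathbb{V})$ preserves the symplectic form, $\langle \mathbb{V}(a), \mathbb{V}(b)\rangle = 0$ whenever $a+b \neq 0$, whence the duality $\mathbb{V}_{\geq 1-a}^\perp = \mathbb{V}_{\geq a}$.

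The dimension count reduces to a combinatorial verification. Each part $\lambda_i$ of $\lambda = \Phi^C(\mu,\nu)$ should contribute an $\mathfrak{sl}_2$-type string of weights $\lambda_i - 1, \lambda_i - 3, \ldots, 1-\lambda_i$ to the $\phi$-decomposition, and the count of weights $\geq a$ in such a string is exactly $\max(\lceil (\lambda_i - a)/2\rceil, 0)$. The peculiar merging rule in the definition of $\Phi^C$---replacing successive terms $(2s, 2t)$ with $(s+t, s+t)$ when $s<t$---encodes precisely how two adjacent $x$-Jordan blocks of $\mathbb{V}$ of unequal sizes, bridged by the vector $v$, fuse into a single longer odd-weight $\phi$-string. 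I would verify this on an explicit representative of $\mathbb{O}_{\mu,\nu}$ and then propagate by $G$-equivariance.

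For uniqueness, the duality $\mathbb{V}_{\geq a} = \mathbb{V}_{\geq 1-a}^\perp$ reduces the problem to fixing $\mathbb{V}_{\geq a}$ for $a \geq 1$. Given two such filtrations attached to the same $(v,x)$, the associated graded (with the distinguished vector $v$ at degree $1$ and the induced $x$-action on each piece) is determined by the orbit data; an exotic analogue of the standard conjugacy result for Jacobson--Morozov triples shows that the two filtrations are conjugate by an element $g \in G^{(v,x)}$ that fixes $v$ and respects the grading. The combined constraints $g \cdot v = v$ and preservation of each $\mathbb{V}(b)$ force $g$ to be trivial, so the two filtrations coincide. The main obstacle is the combinatorial dimension count in the existence step: reconciling the ad-hoc operation $\Phi^C$ with the explicit $\phi$-weight decomposition requires a careful case analysis based on the relative sizes of consecutive parts of $\mu$ and $\nu$, aligned with the conventions of \cite{special}. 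The uniqueness argument, though conceptually clean, depends on an appropriate exotic Jacobson--Morozov rigidity statement that must itself be verified in this setting.
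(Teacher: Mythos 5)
The paper does not actually prove this proposition: it is quoted verbatim from Achar--Henderson--Sommers, with the proof being the citation ``see Theorem 5.5 of \cite{special}.'' So there is no internal argument to compare against, and your attempt must be judged on its own. As a strategy sketch it is reasonable, but it has two genuine gaps. In the existence step, the assertion that a cocharacter $\phi$ with $\operatorname{Ad}(\phi(t))x = t^2x$ and $\phi(t)v = tv$ is ``manifest'' on a normal form, with weight multiset matching $\Phi^C(\mu,\nu)$, is precisely the content of the theorem and is deferred rather than proved. On the standard normal form (see the worked example in Section 2.2, with $\mu=(1^3)$, $\nu=(3)$) the vector $v=e_1+e_5+e_6$ has one component at the head of a length-$4$ Jordan block of $x$ and two components on singleton blocks, so it is \emph{not} homogeneous for the obvious block-adapted grading; one must regrade $\mathbb{V}$ so that $v$ becomes homogeneous of weight $1$ while $x$ stays homogeneous of degree $2$ and the symplectic form stays graded, and showing that this can be done with exactly the weight multiset dictated by the merging rule in $\Phi^C$ (note the parts $\lambda_i$ do \emph{not} correspond to $x$-Jordan blocks) is the combinatorial heart of the statement. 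Your closing paragraph concedes that this is unverified.

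The uniqueness argument is the more serious problem. If $g \in G^{(v,x)}$ conjugates one splitting cocharacter to another, it carries the first grading onto the second; it does not preserve either, so nothing follows until you know $g$ lies in the parabolic stabilizing the first filtration --- the exotic analogue of ``the centralizer of a nilpotent element is contained in its Jacobson--Morozov parabolic,'' which is an unproved input here. The claim that $g$ is forced to be trivial is false (the reductive part of $G^{(v,x)}$ is positive-dimensional for every non-generic orbit) and is in any case not what is needed. More fundamentally, uniqueness must be established for \emph{all} $C$-adapted $(\mu,\nu)$-filtrations, not only those split by a cocharacter: a priori a $C$-adapted filtration need not admit any compatible grading making $x$ homogeneous, so rigidity of exotic $\mathfrak{sl}_2$-data cannot exclude such filtrations. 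What does close the argument is a direct one, visible already in the example of Section 2.2: $C$-adaptedness forces $\mathbb{V}_{\geq a}$ to contain canonical subspaces built from $\operatorname{Im}(x^k)$ and the vectors $x^k v$, and together with the duality $\mathbb{V}_{\geq a} = \mathbb{V}_{\geq 1-a}^{\perp}$ the prescribed dimensions leave no freedom. Either supply that direct argument or invoke Theorem 5.5 of \cite{special} as the paper does.
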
 \begin{proof} See Theorem $5.5$ of \cite{special}. \end{proof}

\begin{definition} Fix a specific $(v,x) \in \mathbb{O}_{\mu, \nu}$, and the corresponding $(\mu, \nu)$-filtration $(\mathbb{V}_{\geq a})_{a \in \mathbb{Z}}$. Let $P \subset G$ be the parabolic subgroup stabilizing this isotropic flag. Define: \begin{align*} \mathfrak{s}_{\geq 2} = \{ x \in \mathfrak{s} | x(V_{\geq a}) \subseteq V_{\geq a+2} \} \end{align*} With this choice, it follows that $\mathbb{V}_{\geq 1} \oplus \mathfrak{s}_{\geq 2}$ is a $P$-submodule of $\mathbb{V} \oplus \mathfrak{s}$; let $\widetilde{\mathbb{O}_{(\mu, \nu)^C}}=G \times_P (\mathbb{V}_{\geq 1} \oplus \mathfrak{s}_{\geq 2})$.  \end{definition}

\begin{proposition} \label{specialresolution} The image of the natural map $G \times_P (\mathbb{V}_{\geq 1} \oplus \mathfrak{s}_{\geq 2}) \rightarrow \mathfrak{N}$ is $\overline{\mathbb{O}_{(\mu, \nu)^C}}$, and this map $\pi_{(\mu, \nu)^C}: \widetilde{\mathbb{O}_{(\mu, \nu)^C}} \rightarrow \overline{\mathbb{O}_{(\mu, \nu)^C}}$ is a resolution of singularities. Further, $\pi_{(\mu, \nu)^C}$ is an isomorphism restricted to $\pi_{(\mu, \nu)^C}^{-1}(\mathbb{O}_{\mu, \nu})$. \end{proposition}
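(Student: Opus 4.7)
The plan is to imitate the Kempken–Spaltenstein-style parabolic-induction construction that Achar, Henderson and Sommers carry out in \cite{special}, treating $\widetilde{\mathbb{O}_{(\mu,\nu)^C}} = G \times_P (\mathbb{V}_{\geq 1} \oplus \mathfrak{s}_{\geq 2})$ as the total space of a $G$-equivariant vector bundle over $G/P$. The source is smooth (a vector bundle over a projective flag variety), the natural map $(g,(v',s')) \mapsto (gv', gs'g^{-1})$ is well-defined on the quotient because $\mathbb{V}_{\geq 1} \oplus \mathfrak{s}_{\geq 2}$ is $P$-stable by construction, and the composition with the projection to $\mathfrak{N}$ is proper because $G/P$ is projective and $\mathfrak{N}$ is affine. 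To see the image really sits inside $\mathfrak{N}$, observe that every $s' \in \mathfrak{s}_{\geq 2}$ shifts the finite filtration $(\mathbb{V}_{\geq a})_a$ up by two at each application, hence is nilpotent; the isotropy condition $s' \in \mathfrak{s}$ and the condition $v' \in \mathbb{V}_{\geq 1}$ are built in.

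Next I would identify the image as $\overline{\mathbb{O}_{(\mu,\nu)^C}}$. By properness the image is closed and $G$-invariant, so a union of orbit closures; by irreducibility of the source it is irreducible, hence is the closure of a single orbit. Taking $g=e$ and the fixed $(v,x) \in \mathbb{O}_{\mu,\nu}$ shows $\mathbb{O}_{\mu,\nu}$ lies in the image, and since $\mathbb{O}_{\mu,\nu} \subseteq \overline{\mathbb{O}_{(\mu,\nu)^C}}$ (both bipartitions have the same $\Phi^C$-image, and $(\mu,\nu)^C$ is maximal among bipartitions with this $\Phi^C$-image in the closure order), the top orbit in the image must be $\mathbb{O}_{(\mu,\nu)^C}$. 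To rule out a smaller top orbit I would compare
\begin{align*}
\dim \widetilde{\mathbb{O}_{(\mu,\nu)^C}} = \dim(G/P) + \dim \mathbb{V}_{\geq 1} + \dim \mathfrak{s}_{\geq 2}
\end{align*}
with the known dimension of $\mathbb{O}_{(\mu,\nu)^C}$ from \cite{enhanced}, \cite{kato1}, or alternatively quote the corresponding statement (Sections 5–6) in \cite{special} directly.

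For the final assertion that $\pi_{(\mu,\nu)^C}$ is an isomorphism over $\mathbb{O}_{\mu,\nu}$, the key input is the uniqueness statement from the previous proposition: every $(v',x') \in \mathbb{O}_{\mu,\nu}$ admits a unique $(\mu,\nu)$-filtration $C$-adapted to it. A point of $\pi_{(\mu,\nu)^C}^{-1}(v',x')$ is (a $P$-orbit of) a pair $(g,(v'',s''))$ with $gv''=v'$, $gs''g^{-1}=x'$, equivalently a point of $G/P$ whose associated isotropic flag is $C$-adapted to $(v',x')$; uniqueness forces this fiber to be a single point. A proper bijective morphism onto the smooth homogeneous space $\mathbb{O}_{\mu,\nu}$ is an isomorphism (Zariski's main theorem), and this in particular shows $\pi_{(\mu,\nu)^C}$ is birational, so combined with smoothness of the source it is a resolution of singularities.

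The step I expect to be hardest is the identification of the image with $\overline{\mathbb{O}_{(\mu,\nu)^C}}$ rather than a proper sub-closure: this rests on the combinatorial control of the maps $\Phi^C, \widehat{\Phi}^C$ and the dimension formula for orbits in $\mathfrak{N}$, which is precisely the content of the detailed analysis of \cite{special}. The remainder of the proof is formal once that matching is in hand.
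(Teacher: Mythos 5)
The paper does not give its own proof of this proposition; it cites Theorem~5.7 of Achar--Henderson--Sommers \cite{special} outright. Your sketch reconstructs the broad strategy of that theorem, and most of it is sound: properness, smoothness of the source, and the fact that the image lies in $\mathfrak{N}$ and is a single orbit closure are all handled correctly, and you rightly flag that identifying the top orbit with $\mathbb{O}_{(\mu,\nu)^C}$ is the substantive combinatorial step to be imported from \cite{special}.

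There is, however, one genuine gap at the end. You establish that the uniqueness of the $C$-adapted $\lambda$-filtration forces singleton fibers over $\mathbb{O}_{\mu,\nu}$, and then say this ``in particular shows $\pi_{(\mu,\nu)^C}$ is birational.'' That inference only works when $\mathbb{O}_{\mu,\nu}$ is dense in the target, i.e.\ when $(\mu,\nu)$ is already $C$-distinguished. In general $(\mu,\nu) \neq (\mu,\nu)^C$, the dense orbit in $\overline{\mathbb{O}_{(\mu,\nu)^C}}$ is $\mathbb{O}_{(\mu,\nu)^C}$ (strictly larger than $\mathbb{O}_{\mu,\nu}$), and an isomorphism over the smaller orbit says nothing directly about generic fibers. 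The fix is immediate but needs to be said: since $\Phi^C((\mu,\nu)^C) = \Phi^C(\mu,\nu) = \lambda$, the uniqueness proposition applies equally with $(\mu,\nu)^C$ in place of $(\mu,\nu)$, giving singleton fibers over $\mathbb{O}_{(\mu,\nu)^C}$ as well; that is what delivers birationality. The isomorphism over $\mathbb{O}_{\mu,\nu}$ is then a separate (and stronger, for the purposes of Corollary~\ref{acyc}) statement rather than the source of birationality. With that one adjustment your outline matches the route of the cited theorem.
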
 \begin{proof} See Theorem $5.7$ in \cite{special}. \end{proof}

Note that the definition above of $\widetilde{\mathbb{O}_{(\mu, \nu)^C}}$, and the definition in Section $2.1$ of $\widetilde{\mathfrak{N}}$ are not entirely canonical (they depend on choices of $(v,x)$, and a Cartan $T$, respectively). This is slightly inconvenient as we now want to relate the two varieties; the easiest way to fix the problem is via the equivalent, canonical descriptions below. 
\begin{lemma} Let $B' \subset P$ be a Borel subgroup. \begin{gather*} \widetilde{\mathfrak{N}} \simeq \{(0 \subset \mathbb{V}_1 \subset \cdots \subset \mathbb{V}_n \subset \cdots \subset \mathbb{V}_{2n}=\mathbb{V}), (v,s) \in \mathfrak{N} \text{ }|\text{ } \text{dim} \mathbb{V}_i = i, \mathbb{V}_i = \mathbb{V}_{2n-i}^{\perp}, v \in \mathbb{V}_n, s \mathbb{V}_{i+1} \subseteq \mathbb{V}_{i} \} \\ \widetilde{\mathbb{O}_{(\mu, \nu)^C}} \simeq  \{ (\mathbb{V}_{ \geq i}), (v,x) \in \mathfrak{N}\text{ } |\text{ } (\mathbb{V}_{ \geq i}) \text{ is $(\mu, \nu)$-adapted to } (v,x) \} \\ \widetilde{\mathfrak{N}} \supset G \times_{B'} (\mathbb{V}_{\geq 1} \oplus \mathfrak{s}_{\geq 2}) \simeq \{ ((0 \subset \mathbb{V}_1 \subset \cdots \subset \mathbb{V}_n \subset \cdots \subset \mathbb{V}_{2n}=\mathbb{V}), (v,s)) \in \widetilde{\mathfrak{N}} \text{ } |\text{ } \\ (\mathbb{V}_{\lambda_a}) \text{ is $(\mu, \nu)$-adapted to } (v,x) \} \end{gather*} \end{lemma}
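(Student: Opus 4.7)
The plan is to view all three identifications as instances of the classical description
\begin{equation*}
G\times_H W \;\simeq\; \{(gH,x)\in G/H\times M : x\in g\cdot W\},
\end{equation*}
valid for an ambient $G$-module $M$ and an $H$-submodule $W\subseteq M$, applied in turn to $H=B$, $H=P$, and $H=B'$.

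For the first description I would identify $G/B$ with the variety of full isotropic flags via $gB \mapsto g\cdot\mathbb{V}_\bullet^{\mathrm{std}}$. The claim then reduces to
\begin{equation*}
(\mathbb{V}\oplus\mathfrak{s})^+ \;=\; \mathbb{V}_n^{\mathrm{std}} \,\oplus\, \{s\in\mathfrak{s} : s\mathbb{V}_{i+1}^{\mathrm{std}}\subseteq\mathbb{V}_i^{\mathrm{std}}\},
\end{equation*}
which is a weight-space calculation. The $\mathbb{V}$-summand is immediate, since the strictly positive weights of $\mathbb{V}$ are $\epsilon_1,\ldots,\epsilon_n$ and their weight vectors span the standard Lagrangian. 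For the $\mathfrak{s}$-summand I would use the $G$-module isomorphism $\mathfrak{s}\simeq\Lambda^2\mathbb{V}$ (equivalently, a block decomposition with respect to a polarization $\mathbb{V}=L\oplus L^*$), enumerate the positive weight vectors as $e_i\wedge e_j$ and $e_i\wedge e_j^*$ with $i<j$, and check that in the flag-adapted ordered basis $(e_1,\ldots,e_n,e_n^*,\ldots,e_1^*)$ these are exactly the strictly upper-triangular elements of $\mathfrak{s}$, which is precisely the stated flag-shift condition.

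The second description follows the same template with $H=P$: $G/P$ is identified with the variety of $(\mu,\nu)$-filtrations via the transitive action underlying Proposition 2.5, and the conditions $v\in\mathbb{V}_{\geq 1}$ and $x\mathbb{V}_{\geq a}\subseteq\mathbb{V}_{\geq a+2}$ are tautologically the defining conditions of the $P$-submodules $\mathbb{V}_{\geq 1}$ and $\mathfrak{s}_{\geq 2}$. For the third description, the inclusion $B'\subseteq P$ forces the $B'$-flag to refine the $(\mu,\nu)$-filtration at the distinguished dimensions $\lambda_a$, and compatibility with the first description reduces to the two inclusions $\mathbb{V}_{\geq 1}\subseteq\mathbb{V}_n^{\mathrm{std}}$ (immediate, because $\mathbb{V}_{\geq 1}\subseteq\mathbb{V}_{\geq 1}^{\perp}=\mathbb{V}_{\geq 0}$ shows $\mathbb{V}_{\geq 1}$ is isotropic, hence contained in any Lagrangian of the refining flag) and $\mathfrak{s}_{\geq 2}\subseteq\mathfrak{s}^+$ (any operator shifting the coarse filtration by $2$ strictly lowers the index in the refined full flag).

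The main obstacle is the weight-space identity for $\mathfrak{s}^+$ in the first step: one must carefully match the type-$C$ positive cone with strict upper-triangularity in the specific ordering of the basis around the middle of the isotropic flag, and keep track of the fact that $\mathfrak{s}$ is the \emph{complement} to $\mathfrak{sp}$ in $\mathfrak{gl}$ rather than a Lie subalgebra. Once this is in place, the second and third identifications follow by essentially the same argument, with only mild checks on the filtration inclusions.
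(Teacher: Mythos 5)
The paper actually states this lemma without proof, treating the three identifications as standard descriptions of associated-bundle constructions, so there is no in-paper argument to compare against. Your proposal is the natural (and essentially the only sensible) way to verify the lemma, and it is correct as outlined: realize each $G\times_H W$ as an incidence variety over $G/H$ and then identify the fibre $W$ by a weight or filtration computation.

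A few points worth tightening. Your weight identity
\[
(\mathbb{V}\oplus\mathfrak{s})^+ = \mathbb{V}_n \oplus \{s\in\mathfrak{s} : s\mathbb{V}_{i+1}\subseteq\mathbb{V}_i\}
\]
is indeed the crux of the first description, and the $G$-module isomorphism $\mathfrak{s}\simeq\Lambda^2\mathbb{V}$ is the right tool; the same weight list $\{\epsilon_i-\epsilon_j\,(i<j),\ \epsilon_i+\epsilon_j\,(i<j),\ \epsilon_i\}$ is used later in the paper in the proof that $\omega_{\widetilde{\mathfrak{N}}}\simeq\mathcal{O}_{\widetilde{\mathfrak{N}}}(-\epsilon_1-\cdots-\epsilon_n)$, which corroborates the computation. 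For the third description, the inclusion $\mathbb{V}_{\geq 1}\subseteq\mathbb{V}_n$ is clean as you say, but the inclusion $\mathfrak{s}_{\geq 2}\subseteq\{s: s\mathbb{V}_{i+1}\subseteq\mathbb{V}_i\}$ deserves one more sentence: given $i$, let $a^*$ be the largest integer with $\lambda_{a^*}=\dim\mathbb{V}_{\geq a^*}\geq i+1$, so that $\mathbb{V}_{i+1}\subseteq\mathbb{V}_{\geq a^*}$ and $\lambda_{a^*+1}\leq i$; then $s\mathbb{V}_{i+1}\subseteq\mathbb{V}_{\geq a^*+2}$, whose dimension $\lambda_{a^*+2}\leq\lambda_{a^*+1}\leq i$ forces $\mathbb{V}_{\geq a^*+2}\subseteq\mathbb{V}_i$ in the refining full flag. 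This is the ``strictly lowers the index'' assertion you make, and it does hold, but the monotonicity argument through the two intermediate indices is the content and should be made explicit. Finally, you should note (as you implicitly do via the uniqueness statement in the paper) that $G$ acts transitively on $(\mu,\nu)$-filtrations, so $G/P$ really is their moduli; and that $\mathfrak{s}_{\geq 2}$ is already known to be a $P$-submodule, hence a fortiori a $B'$-submodule, so the associated bundle over $G/B'$ is well defined.
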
 
\begin{definition} Let $\widehat{\mathbb{O}_{\mu, \nu}} = G \times_{B'} (\mathbb{V}_{\geq 1} \oplus \mathfrak{s}_{\geq 2}) \cap \pi^{-1}(\overline{\mathbb{O}_{\mu, \nu}})$ be a subvariety of $\widetilde{\mathfrak{N}}$. \end{definition}
\begin{corollary} \label{acyc} Consider the natural map $\theta_{\mu, \nu}: \widehat{\mathbb{O}_{\mu, \nu}} \rightarrow \overline{\mathbb{O}_{\mu,\nu}}$. The fibres of $\theta_{\mu, \nu}$ over a point in $\mathbb{O}_{\mu,\nu}$ are flag varieties for the Levi subgroup of $P$, and consequently have vanishing cohomology in degrees greater than $0$. \end{corollary}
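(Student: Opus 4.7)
The plan is to factor the map $\theta_{\mu,\nu}$ through the special resolution $\pi_{(\mu,\nu)^C}$ constructed in Proposition \ref{specialresolution}, and then identify the resulting fibres with partial flag varieties. More precisely, using the canonical descriptions from the preceding Lemma, the inclusion $B' \subset P$ induces a natural projection
\begin{align*}
q \colon G \times_{B'} (\mathbb{V}_{\geq 1} \oplus \mathfrak{s}_{\geq 2}) \longrightarrow G \times_{P} (\mathbb{V}_{\geq 1} \oplus \mathfrak{s}_{\geq 2}) = \widetilde{\mathbb{O}_{(\mu, \nu)^C}},
\end{align*}
whose fibres are isomorphic to $P/B'$, and the restriction of $\pi$ to $G \times_{B'} (\mathbb{V}_{\geq 1} \oplus \mathfrak{s}_{\geq 2})$ factors as $\pi_{(\mu, \nu)^C} \circ q$. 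Intersecting with $\pi\inv(\overline{\mathbb{O}_{\mu, \nu}})$, I obtain that $\theta_{\mu, \nu}$ equals $\pi_{(\mu,\nu)^C} \circ q$ restricted to $\widehat{\mathbb{O}_{\mu,\nu}}$, the preimage of $\overline{\mathbb{O}_{\mu,\nu}} \subseteq \overline{\mathbb{O}_{(\mu,\nu)^C}}$.

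Next I would compute the fibre of $\theta_{\mu,\nu}$ over a chosen point $(v, x) \in \mathbb{O}_{\mu, \nu}$. By the second assertion of Proposition \ref{specialresolution}, the map $\pi_{(\mu,\nu)^C}$ is an isomorphism over $\mathbb{O}_{\mu,\nu}$, so $\pi_{(\mu,\nu)^C}^{-1}(v,x)$ is a single point, corresponding to the unique $(\mu, \nu)$-adapted filtration on $(v,x)$ furnished by Definition \ref{filt}. Consequently
\begin{align*}
\theta_{\mu,\nu}^{-1}(v,x) \;=\; q^{-1}\bigl(\pi_{(\mu,\nu)^C}^{-1}(v,x)\bigr) \;\simeq\; P/B'.
\end{align*}
Writing the Levi decomposition $P = L \ltimes U_P$ and $B' = B_L \ltimes U_P$ with $B_L$ a Borel of $L$, the quotient $P/B'$ is canonically isomorphic to the full flag variety $L/B_L$ of the Levi subgroup $L$.

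Finally, the cohomology vanishing is a standard consequence of Borel--Weil--Bott applied to the trivial character of $B_L$: $H^i(L/B_L, \mathcal{O}_{L/B_L}) = 0$ for all $i > 0$, and $H^0(L/B_L, \mathcal{O}_{L/B_L}) = \mathbb{C}$. This gives the statement for the fibres above points of the open orbit. There is no real obstacle here: the only subtlety is to unwind the two descriptions of $\widetilde{\mathfrak{N}}$ and $\widetilde{\mathbb{O}_{(\mu,\nu)^C}}$ from the preceding Lemma so that the factorization $\theta_{\mu,\nu} = \pi_{(\mu,\nu)^C} \circ q$ makes literal sense, and to invoke Proposition \ref{specialresolution} to collapse the $\pi_{(\mu,\nu)^C}$-fibre to a point over the open orbit.
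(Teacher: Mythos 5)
Your argument is correct and matches the paper's proof essentially step for step: you factor $\theta_{\mu,\nu}$ as $\pi_{(\mu,\nu)^C}\circ q$, collapse the $\pi_{(\mu,\nu)^C}$-fibre to a point over the open orbit via Proposition \ref{specialresolution}, identify the remaining fibre with $P/B' \simeq L/B_L$, and invoke Borel--Weil--Bott for the cohomology vanishing.
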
 \begin{proof} By construction, the fibres of $\theta_{\mu, \nu}$ are the same as the fibres of the composite map $G \times_{B'} (\mathbb{V}_{\geq 1} \oplus \mathfrak{s}_{\geq 2}) \rightarrow G \times_{P} (\mathbb{V}_{\geq 1} \oplus \mathfrak{s}_{\geq 2}) \rightarrow \overline{\mathbb{O}_{\mu,\nu}}$. The second map has singleton fibres over $\mathbb{O}_{\mu,\nu}$ by \ref{specialresolution}, and the first map has fibres $P/B' \simeq L/L \cap B'$ where $L$ is the Levi subgroup of $P$. The cohomology vanishing for the structure sheaf follows from Borel-Weil-Bott. \end{proof}

\begin{example} Let $n=6$, $\mu = (1^3), \nu = (3)$. Then replacing successive terms $(2s,2t)$ of the sequence $(2,6,2,0,2,0)$ with $(s+t,s+t)$ if $s<t$, we have $\Phi^C(\mu, \nu)=\lambda=(4,4,2,1,1) \in \mathcal{P}_{12}^C$. Then $\lambda'=(2,2,1,0,1,0)$, so $(\mu, \nu)^{C}=\widetilde{\Phi}^C(\lambda)=((2,1^2),(2))$. Using Definition \ref{filt}, $\mathbb{V}_{\geq 4}=0, \text{dim }\mathbb{V}_{\geq 3}=\text{dim }\mathbb{V}_{\geq 2} = 2, \text{dim }\mathbb{V}_{\geq 1}=5, \text{dim }\mathbb{V}_{\geq 0}=7, \text{dim }\mathbb{V}_{\geq -1}=\text{dim }\mathbb{V}_{\geq -2}=10, \mathbb{V}_{\geq -3}=\mathbb{V}$. Thus: \begin{align*} \widetilde{\mathbb{O}_{(\mu, \nu)^C}} = G \times_P (\mathbb{V}_{\geq 1} \oplus \mathfrak{s}_{\geq 2}) = \{ (&0 \subset \mathbb{V}_{\geq 3} \subset \mathbb{V}_{\geq 1} \subset \mathbb{V}_{\geq 0} \subset \mathbb{V}_{\geq -2} \subset \mathbb{V}), (v,x) | \\ &v \in \mathbb{V}_{\geq 1}, x \mathbb{V} \subseteq \mathbb{V}_{\geq -2}, x\mathbb{V}_{\geq -2} \subseteq \mathbb{V}_{\geq 1}, x\mathbb{V}_{\geq 0} \subseteq \mathbb{V}_{\geq 3}, x\mathbb{V}_{\geq 3} = 0 \} \end{align*} Proposition \ref{specialresolution} now implies that the fibres of the map $\pi_{{\mu, \nu}^C}: \widetilde{\mathbb{O}_{(\mu, \nu)^C}} \rightarrow \mathbb{O}_{(\mu, \nu)^C}$ over $\mathbb{O}_{\mu, \nu}$ are singletons. To check this, it suffices to compute the fibre of $\pi_{{\mu, \nu}^C}$ over the following point $(v,x) \in \mathbb{O}_{\mu, \nu}$: \begin{align*} v=\left(\begin{array}{c}
1 \\ 
0 \\ 
0 \\ 
0 \\ 
1 \\ 
1 \\ 
0 \\ 
0 \\ 
0 \\ 
0 \\ 
0 \\ 
0
\end{array} \right), x=\left( \begin{array}{cccccccccccc}
0 & 1 & � & � & � & � & � & � & � & � & � & � \\ 
� & 0 & 1 & � & � & � & � & � & � & � & � & � \\ 
� & � & 0 & 1 & � & � & � & � & � & � & � & � \\ 
� & � & � & 0 & � & � & � & � & � & � & � & � \\ 
� & � & � & � & 0 & � & � & � & � & � & � & � \\ 
� & � & � & � & � & 0 & � & � & � & � & � & � \\ 
� & � & � & � & � & � & 0 & � & � & � & � & � \\ 
� & � & � & � & � & � & � & 0 & � & � & � & � \\ 
� & � & � & � & � & � & � & � & 0 & 1 & � & � \\ 
� & � & � & � & � & � & � & � & � & 0 & 1 & � \\ 
� & � & � & � & � & � & � & � & � & � & 0 & 1 \\ 
� & � & � & � & � & � & � & � & � & � & � & 0
\end{array} \right) \end{align*} We have $\text{dim }(\text{Im }(x^2) \oplus \mathbb{C}v) = 5$; since $\text{Im }(x^2) \oplus \mathbb{C}v \subseteq \mathbb{V}_{\geq 1}$ it follows $\mathbb{V}_{\geq 1} = \text{Im }(x^2) \oplus \mathbb{C}v, \mathbb{V}_{\geq 0} = (\text{Im }(x^2) \oplus \mathbb{C}v)^{\perp}$. Since $\text{Im}(x^3) \subset \mathbb{V}_{\geq 3}$ and both vector spaces have dimension $2$, $V_{\geq 3} = \text{Im}(x^3), V_{\geq -2} = \text{Im }(x^3)^{\perp}$. Hence $\pi_{(\mu,\nu)^C}^{-1}(v,x)$ is a single flag, as expected. \end{example}

\subsection{Vanishing higher cohomology of dominant line bundles on $\widetilde{\mathfrak{N}}$}

\begin{definition} Denote by $p:\widetilde{\mathfrak{N}} \rightarrow G/B$ the natural projection. Let $\Lambda^+ \subset \Lambda$ denote respectively the dominant weights, and weight lattice of $G$. For $\lambda \in \Lambda^+$, let $\mathbb{C}_{\lambda}$ be the $1$-dimensional representation of $B$ where the torus $T$ acts by $\lambda$. Given a $B$-representation $V$, denote $\mathcal{L}_{G/B}(V)$ for the sheaf of sections of the vector bundle $G \times_B V$; in particular define $\mathcal{O}_{G/B}(\lambda) = \mathcal{L}_{G/B}(\mathbb{C}_{\lambda}^*)$. Following Borel-Weil, $H^0(G/B, \mathcal{O}_{G/B}(\lambda)) = V_{\lambda}$ is the finite dimensional irreducible $G$-module with highest weight $\lambda$. Denote $\mathcal{O}_{\widetilde{\mathfrak{N}}}(\lambda) = p^* \mathcal{O}_{G/B}(\lambda)$. \end{definition}

In this section we will prove the following proposition, borrowing techniques developed by Broer in \cite{broer2}. The corresponding statement for the nilpotent cone of a reductive group was originally proven in \cite{broer1} using a different technique; and reproven in \cite{broer2} in slightly more generality. 

\begin{theorem} \label{vanishing} For $\lambda \in \Lambda^+, H^i(\widetilde{\mathfrak{N}}, \mathcal{O}_{\widetilde{\mathfrak{N}}}(\lambda)) = 0$ for $i>0$. \end{theorem}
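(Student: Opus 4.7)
The plan is to reduce the claim to the Grauert--Riemenschneider vanishing theorem applied to $\pi$, and then extend from the canonical bundle to arbitrary dominant line bundles using techniques of Broer from \cite{broer1}, \cite{broer2}.

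First, since $\mathfrak{N}$ is affine and $\pi:\widetilde{\mathfrak{N}}\to\mathfrak{N}$ is proper (for instance, via the closed embedding $\widetilde{\mathfrak{N}}\hookrightarrow G/B\times(\mathbb{V}\oplus\mathfrak{s})$ followed by the second projection), the Leray spectral sequence reduces the theorem to establishing $R^i\pi_*\mathcal{O}_{\widetilde{\mathfrak{N}}}(\lambda) = 0$ for $i > 0$. Next I would compute $\omega_{\widetilde{\mathfrak{N}}}$ explicitly. Since $p:\widetilde{\mathfrak{N}} = G\times_B(\mathbb{V}\oplus\mathfrak{s})^+ \to G/B$ realizes $\widetilde{\mathfrak{N}}$ as a vector bundle, the adjunction formula gives $\omega_{\widetilde{\mathfrak{N}}} = p^*\bigl(\omega_{G/B}\otimes\det((\mathbb{V}\oplus\mathfrak{s})^+)^*\bigr)$. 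For $G = Sp_{2n}$ one has $2\rho = \sum_{i=1}^n(2n-2i+2)\epsilon_i$, while using $\mathfrak{s}\cong\wedge^2\mathbb{V}$ a direct count shows the sum of strictly positive weights of $\mathbb{V}\oplus\mathfrak{s}$ equals $\sum_{i=1}^n(2n-2i+1)\epsilon_i$. Subtracting yields $\omega_{\widetilde{\mathfrak{N}}} = \mathcal{O}_{\widetilde{\mathfrak{N}}}(-\theta')$ with $\theta' := \epsilon_1+\cdots+\epsilon_n$, matching the remark in the introduction.

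Applying Grauert--Riemenschneider to the proper birational morphism $\pi$ with $\widetilde{\mathfrak{N}}$ smooth now immediately gives $R^i\pi_*\omega_{\widetilde{\mathfrak{N}}} = 0$ for $i > 0$, which is the theorem in the special case $\lambda = -\theta'$. To extend to arbitrary $\lambda\in\Lambda^+$, I would adapt Broer's framework from \cite{broer2}: for a $B$-submodule $U\subseteq W$ of a $G$-module $W$ consisting of a sum of strictly positive weight spaces (relative to some regular dominant element of $\mathfrak{t}$), the collapsing map $f: G\times_B U\to G\cdot U$ satisfies $R^i f_*\tilde p^*\mathcal{O}_{G/B}(\lambda) = 0$ for every dominant $\lambda$, provided appropriate positivity conditions on $U$ hold. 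Applied with $W = \mathbb{V}\oplus\mathfrak{s}$, $U = (\mathbb{V}\oplus\mathfrak{s})^+$ (so that $G\cdot U = \mathfrak{N}$ and $f = \pi$), this should yield the desired vanishing.

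The main obstacle is adapting Broer's argument to the non-Lie-algebraic setting of the exotic nilpotent cone. In the classical Springer case one has $W = \mathfrak{g}$ and $U = \mathfrak{n}$ consisting of positive \emph{root} spaces, and $\omega_{\widetilde{\mathcal{N}}}$ is trivial. Here $W = \mathbb{V}\oplus\mathfrak{s}$ has strictly positive weights $\epsilon_i$ and $\epsilon_i\pm\epsilon_j$ (for $i < j$); the non-root weights $\epsilon_i$ contributed by $\mathbb{V}$ are the source of the shift $\omega_{\widetilde{\mathfrak{N}}} = \mathcal{O}_{\widetilde{\mathfrak{N}}}(-\theta')$. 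One must verify that the positivity hypotheses underlying Broer's inductive vanishing argument remain satisfied once these extra $\mathbb{V}$-contributions are included, and carefully track the additional shift---this is precisely what is responsible for the twisted Weyl action $w\cdot\lambda = w(\lambda+\theta)-\theta$ featured later in the paper (with $\theta = \theta'/2$).
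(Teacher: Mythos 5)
Your overall strategy (Grauert--Riemenschneider plus Broer's technique) is the right one, and your computation of $\omega_{\widetilde{\mathfrak{N}}} = \mathcal{O}_{\widetilde{\mathfrak{N}}}(-\epsilon_1-\cdots-\epsilon_n)$ agrees with the paper. But the proposal stops exactly where the real work begins: the passage from the canonical bundle to an arbitrary dominant line bundle is deferred to ``adapting Broer's framework \ldots provided appropriate positivity conditions on $U$ hold,'' and you yourself flag that this adaptation is the main obstacle without carrying it out. That adaptation \emph{is} the proof, and it is not an inductive positivity argument. The mechanism (following \cite{broer2}) is: set $\lambda' = \lambda + \epsilon_1 + \cdots + \epsilon_n$ (dominant since $\lambda$ is) and form the auxiliary variety $U = G\times_B\bigl((\mathbb{V}\oplus\mathfrak{s})^+\oplus\mathbb{C}_{\lambda'}\bigr)$, i.e.\ enlarge the vector bundle by the one-dimensional $B$-module $\mathbb{C}_{\lambda'}$. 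Two things then happen. First, the same determinant computation you did for $\widetilde{\mathfrak{N}}$ shows $\omega_U \cong p_U^*\mathcal{O}_{G/B}(\lambda)$ --- the extra factor $\mathbb{C}_{\lambda'}^*$ exactly cancels the $-\lambda'$ and leaves $-\lambda$... wait, leaves $\mathbb{C}_\lambda^*$, i.e.\ the line bundle $\mathcal{O}_{G/B}(\lambda)$. Second, because $\mathbb{C}_{\lambda'}$ embeds as the highest-weight line of $V_{\lambda'}$, the natural map $U \to (\mathbb{V}\oplus\mathfrak{s})^+\oplus V_{\lambda'}$ is proper with generically finite image, so Kempf's form of Grauert--Riemenschneider gives $H^i(U,\omega_U)=0$ for $i>0$. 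Finally, pushing down to $G/B$ and decomposing $S\bigl(((\mathbb{V}\oplus\mathfrak{s})^{+}\oplus\mathbb{C}_{\lambda'})^*\bigr)$ by the degree in $\mathbb{C}_{\lambda'}^*$ exhibits $H^i(\widetilde{\mathfrak{N}},\mathcal{O}_{\widetilde{\mathfrak{N}}}(\lambda))$ as the degree-zero summand of $H^i(U,\omega_U)$, which vanishes.

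Two smaller inaccuracies: applying Grauert--Riemenschneider directly to $\pi$ gives vanishing for $\lambda=-\epsilon_1-\cdots-\epsilon_n$, which is \emph{not} dominant, so it is not ``the theorem in the special case $\lambda=-\theta'$'' but rather the separate statement recorded in the paper as Corollary \ref{canonicalvanishing} (which there is deduced from the theorem's proof, not the other way around). And the reduction to $R^i\pi_*$ via Leray, while harmless, is not needed once one argues on $U$ directly. The essential missing idea remains the augmentation of the bundle by $\mathbb{C}_{\lambda'}$; without it the argument does not close.
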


Recall the following theorem of Grauert-Riemenschneider in Kempf's version (see \cite{kempf}, Theorem 4, and also \cite{panyushev}, Theorem $3.4$). 

\begin{proposition} \label{kempf} Let $U$ be an algebraic variety, and $\omega_U$ be the canonical line bundle. If there is a proper generically finite morphism $U \rightarrow X$, where $X$ is an affine variety, then $H^{i}(U, \omega_U) = 0$ for $i>0$. \end{proposition}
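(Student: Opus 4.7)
The plan is to derive the statement from a relative Grauert--Riemenschneider vanishing combined with Serre's vanishing on the affine base, where $f: U \to X$ denotes the given proper generically finite morphism. If one can establish that $R^i f_* \omega_U = 0$ for every $i > 0$, then the Leray spectral sequence
\[ E_2^{p,q} = H^p\bigl( X,\, R^q f_* \omega_U \bigr) \Longrightarrow H^{p+q}(U, \omega_U) \]
collapses: the terms $E_2^{p,q}$ with $q > 0$ vanish by the relative vanishing, while those with $p > 0$ and $q = 0$ vanish because $X$ is affine and $f_* \omega_U$ is coherent (as $f$ is proper). Only $E_2^{0,0}$ survives, forcing $H^i(U, \omega_U) = 0$ for all $i > 0$. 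So the whole content of the theorem is the relative vanishing.

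The substantive step is therefore to prove $R^i f_* \omega_U = 0$ for $i > 0$. I would establish it by reduction to Kawamata--Viehweg vanishing on a smooth projective compactification. Choose compactifications $U \hookrightarrow \bar{U}$ (smooth projective, via Nagata plus resolution of singularities) and $X \hookrightarrow \bar{X}$ (projective) such that $f$ extends to a morphism $\bar{f}: \bar{U} \to \bar{X}$; the indeterminacies of the rational extension can be resolved by further blow-ups of $\bar{U}$ along centers lying over $\bar{X} \setminus X$, which does not change $U$ itself. Since $X$ is affine, the boundary $\bar{X} \setminus X$ is the support of an ample divisor, so there exist an ample line bundle $\mathcal{L}$ on $\bar{X}$ and a section $s \in H^0(\bar{X}, \mathcal{L}^{\otimes N})$ with $\{ s = 0 \} = \bar{X} \setminus X$. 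Because $\bar{f}$ is proper and generically finite, $\bar{f}^* \mathcal{L}$ is big (its top self-intersection equals $\deg(\bar{f}) \cdot \mathcal{L}^{\dim X} > 0$) and nef (as the pullback of an ample bundle); Kawamata--Viehweg then yields
\[ H^i\bigl( \bar{U},\, \omega_{\bar{U}} \otimes \bar{f}^* \mathcal{L}^{\otimes k} \bigr) = 0 \quad \text{for all } i > 0 \text{ and } k > 0. \]
Writing $D$ for the reduced divisor $\bar{f}^{-1}(\bar{X} \setminus X)_{\mathrm{red}}$ on $\bar{U}$, one has $H^i(U, \omega_U) = \varinjlim_k H^i(\bar{U}, \omega_{\bar{U}}(kD))$ (and analogously in a relative formulation for the higher direct images, localizing over affine opens of $X$); comparing $\omega_{\bar{U}}(kD)$ with twists of $\bar{f}^* \mathcal{L}$ and passing to the limit converts the Kawamata--Viehweg vanishing into the desired relative vanishing.

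The main obstacle is the bookkeeping of compactifications: one must simultaneously extend $f$ to $\bar{f}$, arrange that $\bar{U}$ is smooth, and ensure that the boundary on $\bar{U}$ is supported on the pullback of an ample divisor on $\bar{X}$ so that Kawamata--Viehweg applies with a uniform choice of $\mathcal{L}$. One must also verify that the colimit over $k$ commutes with taking higher direct images in the required way. Both tasks are handled by standard but nontrivial resolution-of-indeterminacy arguments; an alternative, more powerful route that avoids the explicit compactification would be to invoke Saito's theory of mixed Hodge modules, which supplies the relative vanishing $R^i f_* \omega_U = 0$ directly in characteristic zero.
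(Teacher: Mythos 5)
Your proposal is essentially correct, but it takes a genuinely different route from the paper, because the paper does not prove this proposition at all: it is quoted as the Grauert--Riemenschneider theorem in Kempf's version, with references to \cite{kempf} (Theorem 4) and \cite{panyushev} (Theorem 3.4), and then applied to the smooth variety $U = G\times_B((\mathbb{V}\oplus\mathfrak{s})^+\oplus\mathbb{C}_{\lambda'})$. What you wrote is the standard modern deduction of such vanishing from Kawamata--Viehweg on a smooth compactification; the cited result is established by different, earlier methods (it predates Kawamata--Viehweg). Your route buys a self-contained characteristic-zero proof at the cost of heavy inputs (Nagata compactification, resolution of indeterminacy, Kawamata--Viehweg), whereas the paper treats the statement as a black box, which is adequate for its purposes.

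A few points to tighten. First, you must assume $U$ smooth (implicit in ``canonical line bundle'' and in the intended application), and the compactification and any resolution of $\bar{U}$ must be chosen to be isomorphisms over $U$, so that $\omega_{\bar{U}}|_U=\omega_U$; your remark about blowing up only over $\bar{X}\setminus X$ covers the indeterminacy locus but not the initial smoothing of $\bar{U}$. Second, the identity $H^i(U,\omega_U)=\varinjlim_k H^i(\bar{U},\omega_{\bar{U}}(kD))$ requires $\mathrm{Supp}(D)=\bar{U}\setminus U$, i.e.\ $\bar{f}^{-1}(X)=U$; this is exactly where properness of $f$ enters (a proper $U\to X$ sitting as a dense open subset of $\bar{f}^{-1}(X)$ must equal it) and should be made explicit, since without properness the proposition is false (e.g.\ $\mathbb{A}^2\setminus\{0\}\to\mathbb{A}^2$). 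Third, you should reduce to $f$ dominant (replace $X$ by the closure of the image) before asserting bigness of $\bar{f}^*\mathcal{L}$. Finally, note that once you have the colimit identity, Kawamata--Viehweg applied to $\omega_{\bar{U}}\otimes\bar{f}^*\mathcal{L}^{\otimes Nk}$, whose associated divisors are cofinal with the multiples $kD$, gives the absolute vanishing $H^i(U,\omega_U)=0$ directly; the opening Leray reduction to $R^if_*\omega_U=0$ is therefore unnecessary for the statement at hand.
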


\begin{lemma} Let $U = G \times_B ( (\mathbb{V} \oplus \mathfrak{s})^+ \oplus \mathbb{C}_{\lambda'})$. Then $H^{i}(U, \omega_U) = 0$ for $i>0$. \end{lemma}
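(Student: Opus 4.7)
The plan is to apply Proposition \ref{kempf} (Grauert--Riemenschneider in Kempf's form), for which I need a proper, generically finite morphism from $U$ to an affine variety. Assuming $\lambda'$ is dominant (the case relevant to Theorem \ref{vanishing}), the line $\mathbb{C}_{\lambda'}$ embeds as a $B$-submodule of the irreducible $G$-module $V_{\lambda'}$ via the highest weight vector $v_{\lambda'}$, and this induces a $G$-equivariant morphism $G \times_B \mathbb{C}_{\lambda'} \to V_{\lambda'}$, $[g, c] \mapsto c \cdot g v_{\lambda'}$. Combining with the exotic Springer map $\pi$, I would define
\[
\Psi: U \to \mathfrak{N} \times V_{\lambda'}, \qquad [g, v, s, c] \mapsto \bigl(gv,\, g s g^{-1},\, c \cdot g v_{\lambda'}\bigr).
\]
This is well-defined because $B$ acts by the same character $\lambda'$ on both $\mathbb{C}_{\lambda'}$ and the line $\mathbb{C} v_{\lambda'}$. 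The target is affine, since $\mathfrak{N}$ is a closed subvariety of the affine space $\mathbb{V} \times \mathfrak{s}$ and $V_{\lambda'}$ is a vector space.

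Next I would verify properness and generic finiteness by factoring
\[
U \xrightarrow{\Psi'} \widetilde{\mathfrak{N}} \times V_{\lambda'} \xrightarrow{\pi \times \mathrm{id}} \mathfrak{N} \times V_{\lambda'},
\]
where $\Psi'([g,v,s,c]) = ([g,v,s],\, c \cdot g v_{\lambda'})$. The second arrow is proper as $\pi$ is. Viewing $U$ as the total space of the line bundle $p^* \mathcal{O}_{G/B}(-\lambda')$ over $\widetilde{\mathfrak{N}}$, the map $\Psi'$ is a morphism of vector bundles over $\widetilde{\mathfrak{N}}$ whose restriction to each fiber is the linear injection $c \mapsto c \cdot g v_{\lambda'}$ (nonzero, since $g v_{\lambda'} \neq 0$). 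This realizes $U$ as a sub-line-bundle of the trivial bundle $\widetilde{\mathfrak{N}} \times V_{\lambda'}$, hence as a closed subvariety. It follows that $\Psi$ is proper, and since $\pi$ is birational, $\Psi$ is generically an isomorphism onto its image, so in particular generically finite.

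The conclusion $H^i(U, \omega_U) = 0$ for $i > 0$ then follows by Proposition \ref{kempf}. The point I expect to need the most care is the identification of $\Psi'$ as a closed immersion rather than a merely set-theoretically injective map; this I would justify by noting that a fiberwise-linear-injective morphism of vector bundles over a common base automatically realizes the source as a sub-bundle of the target.
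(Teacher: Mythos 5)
Your proposal is correct and follows essentially the same route as the paper: both apply Kempf's form of Grauert--Riemenschneider by embedding $\mathbb{C}_{\lambda'}$ as the highest-weight line of $V_{\lambda'}$ (using dominance of $\lambda' = \lambda + \epsilon_1 + \cdots + \epsilon_n$) and producing a proper, generically finite map from $U$ to an affine target, with properness obtained by factoring through a product with a proper map. The only cosmetic difference is that the paper factors through $G/B \times ((\mathbb{V}\oplus\mathfrak{s})^+ \oplus V_{\lambda'})$ and takes the image as the affine target, whereas you realize $U$ as a closed sub-line-bundle of $\widetilde{\mathfrak{N}} \times V_{\lambda'}$ and compose with $\pi \times \mathrm{id}$; your handling of generic finiteness via birationality of $\pi$ is if anything slightly more transparent.
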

\begin{proof} In accordance with \ref{kempf}, it suffices to find an affine variety $X$ with a proper, generically finite map $U \rightarrow X$. Let $X' = (\mathbb{V} \oplus \mathfrak{s})^+ \oplus V_{\lambda'}$, and define $\pi: U\rightarrow X'$ by $\pi(g,(x,y))=(gx,gy)$ where $g \in G, x \in (\mathbb{V} \oplus \mathfrak{s})^+, y \in \mathbb{C}_{\lambda'} \subset V_{\lambda'}$. The map $\pi$ is proper as we can factorize it as follows (note $G/B$ is projective): \begin{align*} G \times_B ( (\mathbb{V} \oplus \mathfrak{s})^+ \oplus \mathbb{C}_{\lambda'}) \hookrightarrow G \times_B ( (\mathbb{V} \oplus \mathfrak{s})^+ \oplus V_{\lambda'}) \simeq G/B \times  ((\mathbb{V} \oplus \mathfrak{s})^+ \oplus V_{\lambda'}) \rightarrow (\mathbb{V} \oplus \mathfrak{s})^+ \oplus V_{\lambda'} \end{align*} Let $X = \text{im}(\pi)$; we claim $X$ has the required property. Note that the map $U \rightarrow X$ generically has singleton fibres over a point $(s,0) \in X$, with $s \in \mathbb{V} \oplus \mathfrak{s}$. Since it is also proper, its fibres are generically finite, as required. \end{proof}

Denote the natural projection $p_U: G \times_B ((\mathbb{V} \oplus \mathfrak{s})^+ \oplus \mathbb{C}_{\lambda'}) \rightarrow G/B$, where $\lambda' = \lambda + \epsilon_1 + \cdots + \epsilon_n$.

\begin{lemma} \label{canonicalsheaf} We have an isomorphism $\omega_U \simeq p_U^* \mathcal{O}_{G/B}(\lambda)$. \end{lemma}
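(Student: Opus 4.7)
My plan is to reduce the lemma to a direct weight computation using the standard formula for the canonical bundle of the total space of a vector bundle. Let $W = (\mathbb{V} \oplus \mathfrak{s})^+ \oplus \mathbb{C}_{\lambda'}$, regarded as a $B$-module, and let $\mathcal{E} = G \times_B W$ so that $p_U : U = \mathcal{E} \to G/B$ is a vector bundle. The relative cotangent sequence for a vector bundle yields
\begin{equation*}
\omega_U \;\simeq\; p_U^*\bigl(\omega_{G/B} \otimes (\det \mathcal{E})^{*}\bigr),
\end{equation*}
so the task reduces to identifying $\omega_{G/B}\otimes(\det\mathcal{E})^*$ as a line bundle on $G/B$. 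Since $T_{G/B}|_{eB} \simeq \mathfrak{g}/\mathfrak{b}$ has $T$-weights $\{-\alpha : \alpha \in \Phi^+\}$, one has $\omega_{G/B} = \mathcal{O}_{G/B}(-2\rho)$ in the paper's conventions. Moreover, if $W$ has total $T$-character $\chi_W$, then $\det\mathcal{E} = \mathcal{L}_{G/B}(\mathbb{C}_{\chi_W}) = \mathcal{O}_{G/B}(-\chi_W)$, and hence $(\det\mathcal{E})^* = \mathcal{O}_{G/B}(\chi_W)$. Combining these gives $\omega_U = p_U^*\mathcal{O}_{G/B}(\chi_W - 2\rho)$, so the lemma is equivalent to the weight identity $\chi_W = 2\rho + \lambda$.

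The main step is therefore to evaluate $\chi_W$. The contribution of the factor $\mathbb{C}_{\lambda'}$ is $\lambda' = \lambda + \epsilon_1 + \cdots + \epsilon_n$, so it suffices to check that the sum of all strictly positive $T$-weights in $\mathbb{V} \oplus \mathfrak{s}$ equals $2\rho - (\epsilon_1+\cdots+\epsilon_n)$. The positive weights of $\mathbb{V}$ are $\epsilon_1,\ldots,\epsilon_n$, contributing $\epsilon_1+\cdots+\epsilon_n$. For $\mathfrak{s}$, using $\mathfrak{s} \simeq \wedge^2\mathbb{V}$ as a $G$-module, the positive weight spaces have weights $\epsilon_i+\epsilon_j$ (for $i<j$) and $\epsilon_i-\epsilon_j$ (for $i<j$); summing these yields $\sum_k (2n-2k)\epsilon_k$. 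Adding the $\mathbb{V}^+$ contribution gives $\sum_k (2n-2k+1)\epsilon_k$. Since $2\rho = \sum_k 2(n-k+1)\epsilon_k$ for type $C_n$, the difference $2\rho - \sum_k(2n-2k+1)\epsilon_k$ equals $\epsilon_1 + \cdots + \epsilon_n$, which is exactly the desired identity. Thus $\chi_W - 2\rho = \lambda$, completing the proof.

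The only genuine calculation is the sum of positive weights of $\mathfrak{s}$; the main thing to be careful about is the convention linking $\mathcal{O}_{G/B}(\mu) = \mathcal{L}_{G/B}(\mathbb{C}_\mu^*)$ to the $B$-character of the fiber at $eB$, so that signs in the identifications $\det\mathcal{E} \leftrightarrow \mathcal{O}_{G/B}(-\chi_W)$ and $\omega_{G/B} \leftrightarrow \mathcal{O}_{G/B}(-2\rho)$ come out consistently. As a consistency check, specializing to $\lambda = -(\epsilon_1+\cdots+\epsilon_n)$ (so that $\lambda'=0$ and $U$ becomes $\widetilde{\mathfrak{N}}$) recovers $\omega_{\widetilde{\mathfrak{N}}} = \mathcal{O}_{\widetilde{\mathfrak{N}}}(-(\epsilon_1+\cdots+\epsilon_n))$, matching the description of the canonical bundle of the exotic Springer resolution asserted in the introduction.
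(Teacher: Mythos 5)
Your proof is correct and follows essentially the same route as the paper: both reduce the statement to the weight computation for the determinant of the relative cotangent sequence of the vector bundle $p_U : U \to G/B$, using the identification $\omega_{G/B}\otimes(\det\mathcal{E})^*$ and the positive weights of $(\mathbb{V}\oplus\mathfrak{s})^+$. The only cosmetic difference is that you invoke $\mathfrak{s}\simeq\wedge^2\mathbb{V}$ to read off the weights, where the paper simply lists them; your consistency check against $\omega_{\widetilde{\mathfrak{N}}}=\mathcal{O}_{\widetilde{\mathfrak{N}}}(-\epsilon_1-\cdots-\epsilon_n)$ is a nice sanity check but not an additional idea.
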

\begin{proof} Since $p_U: U \rightarrow G/B$ has fibre $(\mathbb{V} \oplus \mathfrak{s})^+ \oplus \mathbb{C}_{\lambda'}$, we have a short exact sequence of vector bundles on $U$ (noting $\Omega_{G/B} \simeq \mathcal{L}_{G/B}(\mathfrak{u})$):
\begin{align*} 0 \rightarrow p_U^* \mathcal{L}_{G/B}(\mathfrak{u}) \rightarrow \Omega_U \rightarrow p_U^* \mathcal{L}_{G/B}((\mathbb{V} \oplus \mathfrak{s})^{+} \oplus \mathbb{C}_{\lambda'})^*) \rightarrow 0 \end{align*} 
Since $\omega_U = \wedge^{top}(\Omega_U)$, it now follows that $\omega_U \simeq p_U^* \mathcal{L}_{G/B}(V)$ where: 
\begin{align*} V &= \wedge^{top} (\mathfrak{u} \oplus ((\mathbb{V} \oplus \mathfrak{s})^{+} \oplus \mathbb{C}_{\lambda'})^*) = \wedge^{top}(\mathfrak{u}) \otimes \wedge^{top}((\mathbb{V} \oplus \mathfrak{s})^{+*}) \otimes \mathbb{C}_{\lambda'}^* \end{align*}
Noting that, as $B$-modules, $\mathfrak{u}$ has weights $\epsilon_i - \epsilon_j$ for $i<j$, $\epsilon_i + \epsilon_j$, and $2 \epsilon_i$; and $(\mathbb{V} \oplus \mathfrak{s})^+$ has weights $\epsilon_i - \epsilon_j$ for $i<j$, $\epsilon_i + \epsilon_j$, and $\epsilon_i$; we can conclude that $V = \mathbb{C}_{\epsilon_1 + \cdots + \epsilon_n} \otimes \mathbb{C}_{\lambda'}^*=\mathbb{C}_{\lambda}^*$, as required. \end{proof}
\begin{proof}[Proof of Theorem] Using the above two Lemmas, we compute as follows (for $i>0$):
\begin{align*} 0 &= H^i (U, \omega_U) = H^i(G \times_B ((\mathbb{V} \oplus \mathfrak{s})^+ \oplus \mathbb{C}_{\lambda'}), p_U^* \mathcal{L}_{G/B}(\mathbb{C}_{\lambda}^*)) \\ &= H^i(G/B, \mathcal{L}_{G/B}(\mathbb{C}_{\lambda}^* \otimes S((\mathbb{V} \oplus \mathfrak{s})^{+*} \oplus \mathbb{C}_{\lambda'}^*))) \\ &= \bigoplus_{j,n \geq 0} H^i(G/B, \mathcal{L}_{G/B}(\mathbb{C}_{\lambda}^* \otimes S^j(\mathbb{V} \oplus \mathfrak{s})^{+*} \otimes S^n(\mathbb{C_{\lambda'}^*})))\\ &= \bigoplus_{j,n \geq 0} H^i(G/B, \mathcal{L}_{G/B}(\mathbb{C}_{\lambda+n\lambda'}^* \otimes S^j(\mathbb{V} \oplus \mathfrak{s})^{+*}))\\ &= \bigoplus_{n \geq 0} H^i(G \times_B (\mathbb{V} \oplus \mathfrak{s})^+, p^* \mathcal{L}_{G/B}(\mathbb{C}_{\lambda + n \lambda'}^*)) \end{align*} Taking the $n=0$ summand in the above gives the desired result. \end{proof} 
\begin{corollary} \label{canonicalvanishing} We have $H^i(\widetilde{\mathfrak{N}}, \omega_{\widetilde{\mathfrak{N}}})=0$ for $i>0$. \end{corollary}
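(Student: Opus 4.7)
The plan is to apply Proposition \ref{kempf} directly to the resolution of singularities $\pi : \widetilde{\mathfrak{N}} \to \mathfrak{N}$, rather than to the auxiliary variety $U$ as in the proof of Theorem \ref{vanishing}. The three hypotheses to check are that $\widetilde{\mathfrak{N}}$ is smooth (so that $\omega_{\widetilde{\mathfrak{N}}}$ makes sense), that $\mathfrak{N}$ is affine, and that $\pi$ is proper and generically finite.

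Smoothness of $\widetilde{\mathfrak{N}} = G \times_B (\mathbb{V} \oplus \mathfrak{s})^+$ is immediate, since it is a vector bundle over the smooth variety $G/B$. Affineness of $\mathfrak{N}$ is immediate from its definition as a closed subvariety of the affine space $\mathbb{V} \times \text{End}(\mathbb{V})$. The map $\pi$ is proper because it factors as the closed embedding $\widetilde{\mathfrak{N}} \hookrightarrow G/B \times \mathfrak{N}$ followed by projection to $\mathfrak{N}$, which is proper since $G/B$ is projective. Finally, $\pi$ is birational (in fact an isomorphism over the open dense regular orbit), hence generically finite. Proposition \ref{kempf} then yields $H^i(\widetilde{\mathfrak{N}}, \omega_{\widetilde{\mathfrak{N}}}) = 0$ for $i > 0$.

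As a consistency check with the remark in the introduction, one may also identify the canonical sheaf explicitly by repeating the weight calculation of Lemma \ref{canonicalsheaf} for $p : \widetilde{\mathfrak{N}} \to G/B$ in place of $p_U$, the only change being the absence of a $\mathbb{C}_{\lambda'}$ tensor factor. The relative cotangent sequence gives $\omega_{\widetilde{\mathfrak{N}}} \cong p^* \mathcal{L}_{G/B}(\wedge^{\text{top}} \mathfrak{u} \otimes \wedge^{\text{top}}(\mathbb{V} \oplus \mathfrak{s})^{+*})$, and summing $B$-weights produces $\omega_{\widetilde{\mathfrak{N}}} \cong \mathcal{O}_{\widetilde{\mathfrak{N}}}(-(\epsilon_1 + \cdots + \epsilon_n))$. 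Because this weight is anti-dominant, the corollary is not a formal consequence of Theorem \ref{vanishing}, and the direct appeal to Grauert-Riemenschneider is essential.

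There is no real obstacle here. The content of the argument is just the observation that the same Grauert-Riemenschneider input used in the preceding Lemma applies without modification to $\pi$ itself, bypassing the need for the auxiliary variety $U$.
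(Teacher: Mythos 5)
Your proof is correct, and it takes a genuinely more direct route than the paper's. The paper proves the vanishing by observing that the argument in Theorem \ref{vanishing} — which applies Grauert--Riemenschneider to the auxiliary variety $U = G \times_B ((\mathbb{V}\oplus\mathfrak{s})^+ \oplus \mathbb{C}_{\lambda'})$ and then peels off the $n=0$ graded piece — still works when $\lambda = -\epsilon_1-\cdots-\epsilon_n$, because $\lambda' = \lambda + \epsilon_1 + \cdots + \epsilon_n = 0$ remains dominant; it also computes $\omega_{\widetilde{\mathfrak{N}}}$ explicitly along the way. You instead apply Proposition \ref{kempf} directly to the resolution $\pi:\widetilde{\mathfrak{N}}\to\mathfrak{N}$, verifying the three hypotheses (smooth source, affine target, proper and generically finite map) and reading off the conclusion immediately. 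This bypasses the auxiliary variety entirely and is the most natural way to get this particular vanishing; the only cost is that it does not, by itself, produce the identification $\omega_{\widetilde{\mathfrak{N}}}\simeq\mathcal{O}_{\widetilde{\mathfrak{N}}}(-\epsilon_1-\cdots-\epsilon_n)$, which the paper needs later (in Lemma \ref{duality}) and therefore folds into the same proof. You do supply that computation as a consistency check, so nothing is lost — and your remark that the anti-dominance of $-\epsilon_1-\cdots-\epsilon_n$ is precisely why the corollary is not a formal consequence of Theorem \ref{vanishing} and requires a separate appeal to Grauert--Riemenschneider is exactly the right way to view the logical structure here.
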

\begin{proof} First we calculate that $\omega_{\widetilde{\mathfrak{N}}} = \mathcal{O}_{\widetilde{\mathfrak{N}}}(-\epsilon_1  - \cdots - \epsilon_n)$ . Following the method used in the proof of Lemma \ref{canonicalsheaf}, we have the short exact sequence: \begin{align*} 0 \rightarrow p^* \mathcal{L}_{G/B}(\mathfrak{u}) \rightarrow \Omega_{\widetilde{\mathfrak{N}}} \rightarrow p^* \mathcal{L}_{G/B}((\mathbb{V} \oplus \mathfrak{s})^{+*}) \rightarrow 0 \end{align*} So it follows that $\omega_{\widetilde{\mathfrak{N}}} = \wedge^{top}(\Omega_{\widetilde{\mathfrak{N}}}) = p^* \mathcal{L}_{G/B}(V)$, where $V = \wedge^{top}(\mathfrak{u}) \otimes \wedge^{top}((\mathbb{V} \oplus \mathfrak{s})^{+*}) = \mathbb{C}_{\epsilon_1 + \cdots + \epsilon_n}$ using above calculations. Thus $\omega_{\widetilde{\mathfrak{N}}} \simeq \mathcal{O}_{\widetilde{\mathfrak{N}}}(-\epsilon_1  - \cdots - \epsilon_n)$, as required. 

Now note that the proof of Theorem $2.14$ actually holds under the weaker assumption that $\lambda' = \lambda + \epsilon_1 + \cdots + \epsilon_n$ is dominant; since $\lambda = - \epsilon_1 - \cdots - \epsilon_n$ satisfies this hypothesis, the result follows. \end{proof}

\subsection{Global sections of dominant line bundles on $\widetilde{\mathfrak{N}}$}

In this section, we will use the results of the previous section to compute the global sections of dominant line bundles on $\widetilde{\mathfrak{N}}$. Given a coherent sheaf $\mathcal{E}$ on $G/B$, since $H^i(G/B, \mathcal{E})$ acquires the structure of a $G$-module, we adopt a convenient abuse of notation whereby $H^i(G/B, \mathcal{E})$ denotes the corresponding element of $K(Rep(G))$; denote $\chi_{\lambda}$ to be the image in $K(Rep(G))$ of the irreducible module with highest weight $\lambda$. We will also need the following notation. 

\begin{definition} \label{kostant} Let $p,p': \Lambda \rightarrow \mathbb{Z}$ denote the Kostant partition function in types $B$ and $C$ respectively, defined as follows: \begin{align*} \displaystyle\frac{1}{\prod_{i < j}(1-e^{\epsilon_i-\epsilon_j})(1-e^{\epsilon_i+\epsilon_j}) \prod_{i}(1-e^{2\epsilon_i})} &= \sum_{\mu \in \Lambda} p(\mu) e^{\mu} \\ \frac{1}{\prod_{i < j}(1-e^{\epsilon_i-\epsilon_j})(1-e^{\epsilon_i+\epsilon_j}) \prod_{i}(1-e^{\epsilon_i})} &= \sum_{\mu \in \Lambda} p'(\mu) e^{\mu} \end{align*} Denote $\rho \in \Lambda^+$ to be the half-sum of the positive roots. Also for $\mu \in \Lambda$, denote $\text{conv}(\mu)$ to be the intersection of $\Lambda$ with the convex hull of the set $\{w \mu \text{ } | \text{ } w \in W\}$ in $\Lambda \otimes_{\mathbb{Z}} \mathbb{R}$ (here $W$ denotes the Weyl group of $G$). Denote also $\text{conv}^0 (\lambda)$ be the complement of $\{w \lambda | w \in W\}$ in $\text{conv}(\lambda)$. \end{definition}

\begin{theorem} \label{globalsections} For $\lambda, \mu \in \Lambda^+$, \begin{align*} \text{dim Hom}_G(V_{\mu}, H^0(\widetilde{\mathfrak{N}}, \mathcal{O}_{\widetilde{\mathfrak{N}}}(\lambda))) = \sum_{w \in W} \text{sgn}(w) p'(w(\mu + \rho)-(\lambda + \rho)) \end{align*} \end{theorem}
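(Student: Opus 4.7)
The plan is to compute the $G$-equivariant Euler characteristic of $\mathcal{O}_{\widetilde{\mathfrak{N}}}(\lambda)$ in $K(\mathrm{Rep}(G))$, identify it with $H^0$ via Theorem \ref{vanishing}, and then extract the multiplicity of $V_\mu$ by the Weyl character formula. The essential ingredient will be that the positive weights of the $B$-module $\mathbb{V}\oplus\mathfrak{s}$ are exactly the weights appearing in the denominator of the generating series for $p'$.

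First, since $p\colon \widetilde{\mathfrak{N}}=G\times_B(\mathbb{V}\oplus\mathfrak{s})^+ \to G/B$ is an affine vector bundle, I would compute $Rp_*\,\mathcal{O}_{\widetilde{\mathfrak{N}}}(\lambda) = \mathcal{L}_{G/B}(\mathbb{C}_\lambda^* \otimes S((\mathbb{V}\oplus\mathfrak{s})^{+*}))$. Theorem \ref{vanishing} then lets me replace $H^0$ by the Euler characteristic, and the Leray spectral sequence yields
\[ H^0\bigl(\widetilde{\mathfrak{N}},\mathcal{O}_{\widetilde{\mathfrak{N}}}(\lambda)\bigr) = \sum_{j\ge 0}\chi\bigl(G/B,\mathcal{L}_{G/B}(\mathbb{C}_\lambda^*\otimes S^j(\mathbb{V}\oplus\mathfrak{s})^{+*})\bigr) \]
as elements of $K(\mathrm{Rep}(G))$. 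Next I would apply the standard Borel--Weil--Bott identity, which for any finite-dimensional $B$-module $V$ asserts
\[ D\cdot\chi\bigl(G/B,\mathcal{L}_{G/B}(V)\bigr) = \sum_{w\in W}\mathrm{sgn}(w)\,w\bigl(e^\rho\cdot\mathrm{ch}_T(V^*)\bigr), \]
where $D=\sum_w\mathrm{sgn}(w)e^{w\rho}$ is the Weyl denominator.

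The core calculation is then to identify $\mathrm{ch}_T(S((\mathbb{V}\oplus\mathfrak{s})^+))$. Using the decomposition $\mathfrak{gl}(\mathbb{V})=\mathfrak{s}\oplus\mathfrak{sp}(\mathbb{V})$ (with $\mathfrak{s}$ the subspace of endomorphisms that are \emph{symmetric} with respect to the symplectic form), a short weight count shows that the non-zero weights of $\mathfrak{s}$ are $\pm\epsilon_i\pm\epsilon_j$ for $i<j$, each occurring with multiplicity one. Combined with the weights $\pm\epsilon_i$ of $\mathbb{V}$, the weights of $(\mathbb{V}\oplus\mathfrak{s})^+$ are exactly $\{\epsilon_i\}_i\cup\{\epsilon_i\pm\epsilon_j\}_{i<j}$, which match the factors in the denominator defining $p'$ in Definition \ref{kostant}. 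Hence $\mathrm{ch}_T(S((\mathbb{V}\oplus\mathfrak{s})^+))=\sum_{\mu'}p'(\mu')\,e^{\mu'}$, and inserting this into the BWB identity gives
\[ D\cdot\chi\bigl(\widetilde{\mathfrak{N}},\mathcal{O}_{\widetilde{\mathfrak{N}}}(\lambda)\bigr) = \sum_{w\in W}\mathrm{sgn}(w)\sum_{\mu'\in\Lambda}p'(\mu')\,e^{w(\rho+\lambda+\mu')}. \]

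To finish, I would use that since $\mu+\rho$ is strictly dominant, the multiplicity of $V_\mu$ in $\chi$ equals the coefficient of $e^{\mu+\rho}$ in $D\chi$ (only $w=1$ in $D\chi_\mu$ contributes to this coefficient). Extracting this coefficient and substituting $w\mapsto w^{-1}$ yields exactly $\sum_w\mathrm{sgn}(w)\,p'(w(\mu+\rho)-(\lambda+\rho))$. The main obstacle will be bookkeeping with the dual/sign conventions around $\mathbb{C}_\lambda^*$ and the $B$-module structure; in particular, one must verify carefully that it is $p'$ (with the $(1-e^{\epsilon_i})$ factor), and not $p$ (with the $(1-e^{2\epsilon_i})$ factor), which arises, reflecting that the $2\epsilon_i$-weight directions of $\mathfrak{gl}(\mathbb{V})$ lie in $\mathfrak{sp}(\mathbb{V})$ rather than in $\mathfrak{s}$.
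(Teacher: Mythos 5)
Your proposal is correct and follows essentially the same route as the paper: push forward to $G/B$, use Theorem \ref{vanishing} to replace $H^0$ by the Euler characteristic, filter $S((\mathbb{V}\oplus\mathfrak{s})^{+*})$ by weight lines counted by $p'$, and apply Borel--Weil--Bott. The only cosmetic difference is that you package the Bott computation as the Weyl-denominator identity and extract the coefficient of $e^{\mu+\rho}$, whereas the paper applies Borel--Weil--Bott to each line bundle $\mathcal{L}_{G/B}(\mathbb{C}_{\lambda+\mu'}^*)$ separately and collects the coefficient of $\chi_\mu$; these are the same calculation.
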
 \begin{proof} Using Theorem \ref{vanishing} and the additivity of the Euler characteristic, we can compute that: \begin{align*} H^0(\widetilde{\mathfrak{N}}, \mathcal{O}_{\widetilde{\mathfrak{N}}}(\lambda)) &= \sum_{i \geq 0} (-1)^i H^i(\widetilde{\mathfrak{N}}, \mathcal{O}_{\widetilde{\mathfrak{N}}}(\lambda)) = \sum_{i \geq 0} (-1)^i H^i(G/B, \mathcal{L}_{G/B}(\mathbb{C}_{\lambda}^* \otimes S(\mathbb{V} \oplus \mathfrak{s})^{+*}) )) \\ &= \sum_{\mu \in \Lambda} p'(\mu) \sum_{i \ge 0} (-1)^i H^i(G/B, \mathcal{L}_{G/B}(\mathbb{C}_{\lambda+\mu}^*)) \end{align*} Above we have used the filtration of $S((\mathbb{V} \oplus \mathfrak{s})^{+*})$ by $1$-dimensional $B$-modules, where $\mathbb{C}_{\mu}^*$ occurs with multiplicity $p'(\mu)$. We continue using Borel-Weil-Bott Theorem, which states $\sum_{i \geq 0} H^i(G/B, \mathcal{L}_{G/B}(\mathbb{C}_{\lambda}^*)) = \text{sgn}(w) \chi_{w(\lambda + \rho) - \rho}$, where $w \in W$ is the unique Weyl group element such that $w(\lambda+\rho)-\rho \in \Lambda^+$; this implies that $\sum_{i \geq 0} (-1)^i H^i(G/B, \mathcal{L}_{G/B}(\mathbb{C}_{\lambda+\mu}^*)) =  \text{sgn}(w) \chi_{\mu}$ for $\mu \in \Lambda^+ \Leftrightarrow \mu = w^{-1}(\mu + \rho)-(\lambda + \rho)$. The result now follows: \begin{align*} H^0(\widetilde{\mathfrak{N}},\mathcal{O}_{\widetilde{\mathfrak{N}}}(\lambda))) &= \sum_{w \in W, \mu \in \Lambda^+} \text{sgn}(w) p'(w^{-1}(\mu + \rho)-(\lambda + \rho)) \chi_{\mu} \\ &= \sum_{\mu \in \Lambda^+} \chi_{\mu} \sum_{w \in W} \text{sgn}(w) p'(w(\mu + \rho)-(\lambda + \rho)) \end{align*} \end{proof} Recall also the Weyl character formulae (see e.g. \cite{humphreys}): 
\begin{proposition} For $\mu \in \Lambda^+$, let $m_{\mu}^{\lambda}$ denote the multiplicity of the weight $\lambda$ in $V_{\mu}$. Then \begin{align*} m_{\mu}^{\lambda} = \sum_{w \in W} \text{sgn}(w) p(w(\mu+\rho)-(\lambda+\rho)) \end{align*} \end{proposition}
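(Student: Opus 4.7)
The plan is to derive this as the classical Kostant multiplicity formula, starting from the Weyl character formula and the generating function in Definition \ref{kostant}. Set $A_\nu := \sum_{w \in W} \text{sgn}(w)\, e^{w\nu}$. The first step is to combine the Weyl character formula $\chi_\mu = A_{\mu+\rho}/A_\rho$ with the Weyl denominator identity $A_\rho = e^\rho \prod_{\alpha > 0}(1-e^{-\alpha})$, obtaining
\begin{align*}
\chi_\mu = e^{-\rho}\, A_{\mu+\rho} \cdot \prod_{\alpha > 0}(1-e^{-\alpha})^{-1}.
\end{align*}

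Second, I would reinterpret the paper's definition of $p$. Expanding each factor as $(1-e^\alpha)^{-1} = \sum_{k \geq 0} e^{k\alpha}$ shows that $p(\nu)$ is precisely the Kostant partition function counting non-negative integer expressions $\nu = \sum_{\alpha > 0} k_\alpha \alpha$ over the positive roots of $Sp_{2n}$. By the same formal expansion with $\alpha$ replaced by $-\alpha$ throughout, one also has $\prod_{\alpha > 0}(1-e^{-\alpha})^{-1} = \sum_{\nu \in \Lambda} p(\nu)\, e^{-\nu}$. Substituting this and expanding $A_{\mu+\rho}$,
\begin{align*}
\chi_\mu = \sum_{w \in W} \sum_{\nu \in \Lambda} \text{sgn}(w)\, p(\nu)\, e^{w(\mu+\rho) - \rho - \nu}.
\end{align*}

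The third and final step is to extract the coefficient of $e^\lambda$: for each $w \in W$ the unique $\nu$ contributing is $\nu = w(\mu+\rho) - (\lambda + \rho)$, producing the stated identity. Since this is the standard derivation of Kostant's multiplicity formula from the Weyl character formula, there is no genuine obstacle — the only small point requiring care is harmonizing the paper's convention $\prod(1-e^\alpha)^{-1} = \sum p(\mu) e^\mu$ with the form of the Weyl denominator, which is handled by the substitution $\alpha \leftrightarrow -\alpha$ in the formal expansion.
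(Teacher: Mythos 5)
Your derivation is correct: it is the standard proof of Kostant's multiplicity formula from the Weyl character and denominator formulas, and the paper itself gives no argument here, simply citing Humphreys, where exactly this derivation appears. The one convention point you flag (the paper's $\prod_{\alpha>0}(1-e^{\alpha})^{-1}=\sum p(\mu)e^{\mu}$ versus the $e^{-\alpha}$ form needed against the Weyl denominator) is handled correctly by your formal substitution $\alpha\leftrightarrow-\alpha$.
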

\begin{proposition} \label{vanishing2} $\text{Hom}_G(V_{\mu}, H^0(\widetilde{\mathfrak{N}}, \mathcal{O}_{\widetilde{\mathfrak{N}}}(\lambda))) = 0$ unless $\lambda \in \text{conv}(\mu)$. \end{proposition}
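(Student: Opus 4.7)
The plan is to relate the formula from Theorem \ref{globalsections}, which involves the type $B$ Kostant partition function $p'$, to the Weyl character formula for $G = Sp_{2n}$, which uses the type $C$ partition function $p$. The algebraic bridge is the identity
\[ \sum_{\nu \in \Lambda} p'(\nu) e^{\nu} = \prod_{i=1}^n (1 + e^{\epsilon_i}) \cdot \sum_{\nu \in \Lambda} p(\nu) e^{\nu}, \]
which follows from $1 - e^{2\epsilon_i} = (1 - e^{\epsilon_i})(1 + e^{\epsilon_i})$ together with the two product formulas in Definition \ref{kostant}. Expanding gives $p'(\nu) = \sum_{S \subseteq \{1, \ldots, n\}} p(\nu - \epsilon_S)$, where $\epsilon_S := \sum_{i \in S} \epsilon_i$.

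Substituting this into the formula of Theorem \ref{globalsections} and interchanging the two sums lets one recognize the inner sum over $W$ as the Weyl character formula for $V_\mu$ evaluated at the weight $\lambda + \epsilon_S$:
\[ \dim \Hom_G(V_\mu, H^0(\widetilde{\mathfrak{N}}, \mathcal{O}_{\widetilde{\mathfrak{N}}}(\lambda))) = \sum_{S \subseteq \{1, \ldots, n\}} m_\mu^{\lambda + \epsilon_S}. \]
Thus if the left-hand side is nonzero, then $\lambda + \epsilon_S$ must be a weight of $V_\mu$ for some $S$, and in particular $\lambda + \epsilon_S \in \text{conv}(\mu)$.

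To finish, I would invoke the standard description of $\text{conv}(\mu)$ for the type $C_n$ Weyl group action: for dominant $\mu$, a vector $\nu$ lies in $\text{conv}(\mu)$ if and only if, writing $|\nu|^{\downarrow}_1 \geq \cdots \geq |\nu|^{\downarrow}_n$ for the absolute values of the coordinates of $\nu$ arranged in decreasing order, one has $\sum_{i \leq k} |\nu|^{\downarrow}_i \leq \sum_{i \leq k} \mu_i$ for every $k$. Applied to $\nu = \lambda + \epsilon_S$, and using that $\lambda$ is dominant (hence has nonnegative, weakly decreasing coordinates) and that $(\lambda + \epsilon_S)_i \geq \lambda_i \geq 0$ componentwise, one obtains
\[ \sum_{i \leq k} \lambda_i \leq \sum_{i \leq k} (\lambda + \epsilon_S)_i \leq \sum_{i \leq k} |\lambda + \epsilon_S|^{\downarrow}_i \leq \sum_{i \leq k} \mu_i \]
for every $k$, which is exactly the condition $\lambda \in \text{conv}(\mu)$. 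The only technically nontrivial piece is the generating-function identity in the first step; all remaining arguments are formal, so I do not expect a significant obstacle.
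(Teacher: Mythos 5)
Your proposal is correct and follows essentially the same route as the paper: the identity $p'(\nu)=\sum_{S}p(\nu-\sum_{i\in S}\epsilon_i)$, substitution into Theorem \ref{globalsections}, and recognition of the Weyl character formula to obtain $\sum_{S}m_{\mu}^{\lambda+\sum_{i\in S}\epsilon_i}$. The only difference is that you spell out the final deduction (that $\lambda+\epsilon_S\in\text{conv}(\mu)$ and $\lambda$ dominant force $\lambda\in\text{conv}(\mu)$) via the majorization description of the hyperoctahedral orbit hull, a step the paper leaves implicit.
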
 \begin{proof} From Definition \ref{kostant}, first note the following identity: \begin{align*} \prod_i (1+e^{\epsilon_i}) (\sum_{\mu \in \Lambda} p(\mu) e^{\mu}) &= \sum_{\mu \in \Lambda} p'(\mu) e^{\mu} \\ p'(\mu) &= \sum_{S \subseteq \{1, \cdots ,n\}} p(\mu - \sum_{i \in S} \epsilon_i) \\ \text{dim Hom}_G(V_{\mu}, H^0(\widetilde{\mathfrak{N}}, \mathcal{O}_{\widetilde{\mathfrak{N}}}(\lambda))) &= \sum_{w \in W} \text{sgn}(w) p'(w(\mu + \rho)-(\lambda + \rho)) \\ &= \sum_{w \in W, S \subseteq \{1, \cdots ,n\}} \text{sgn}(w) p(w(\mu+\rho)-(\lambda+\sum_{i \in S}\epsilon_i+\rho)) \\ &= \sum_{S \subseteq \{1, \cdots ,n\}} m_{\mu}^{\lambda+ \sum_{i \in S} \epsilon_i} \end{align*} It is well-known that $m_{\mu}^{\lambda} = 0$ unless $\lambda \in \text{conv}(\mu)$ (since $m_{\mu}^{\lambda} = 0$ unless $\lambda \preccurlyeq \mu$, and $m_{\mu}^{\lambda}=m_{\mu}^{w \lambda}$ for $w \in W$). Since $\lambda \in \Lambda^+$, it then follows that for the above quantity to be nonzero, $\lambda \in \text{conv}(\mu)$. \end{proof}

\section{A quasi-exceptional set generating $D^b(Coh^G(\mathfrak{N}))$}

\subsection{Recollections}

For the reader's convenience, in this section we give a summary of the results in Section $1$ of \cite{bezrukavnikov} regarding quasi-exceptional sets and quasi-hereditary categories that will be relevant to us in the next section. 

Let $\mathcal{C}$ be a triangulated category; for $C_1, C_2 \in \mathcal{C}$, denote $\text{Hom}^{\bullet}(C_1, C_2) = \oplus_n \text{Hom}(C_1, C_2[n])$ (and $\text{Hom}^{<0}(C_1,C_2) = \oplus_n \text{Hom}(C_1, C_2[n])$). Given two sets $\mathcal{S}_1, \mathcal{S}_2$ of objects in $\mathcal{C}$, define $\mathcal{S}_1 \star \mathcal{S}_2$ to be the set of objects $X$ for which there is a distinguished triangle $S_1 \rightarrow X \rightarrow S_2 \rightarrow S_1[1]$; the axioms of a triangulated category imply that this operation is associative. Given a set of objects $\mathcal{S}$ in $\mathcal{C}$, define $\langle \mathcal{S} \rangle = \mathcal{S} \cup \mathcal{S} \star \mathcal{S} \cup \mathcal{S} \star \mathcal{S} \star \mathcal{S} \cup \cdots$. Then $\langle \cup_i \mathcal{S}[i] \rangle$ is the smallest strictly full triangulated subcategory containing $\mathcal{S}$.

\begin{definition} Let $I$ be a totally ordered set. A dualizable quasi-exceptional set in $\mathcal{C}$ consists of a subset $\nabla=\{ \nabla_i |i \in I\}$, and its dual $\triangle = \{ \triangle_i | i \in I\}$, satisfying the following properties: \begin{itemize} \item $\text{Hom}^{\bullet}(\nabla_i, \nabla_j) = 0$ if $i < j$.  \item $\text{Hom}^{<0}(\nabla_i, \nabla_i) = 0$ and $\text{Hom}(\nabla_i, \nabla_i) = \mathbb{C}$ for all $i \in I$. \item $\text{Hom}(\triangle_j, \nabla_i )=0$ if $j>i$ \item $\triangle_i \simeq \nabla_i (\text{mod } \mathcal{C}/\mathcal{C}_{<i})$, where $\mathcal{C}_{<i}$ is the smallest strictly full triangulated subcategory containing the objects $\{ \nabla_j | j<i \}$, and $\mathcal{C}/\mathcal{C}_{<i}$ is the Verdier quotient category. \end{itemize} \end{definition}

\begin{proposition} Suppose that $\{\nabla_i | i \in I\}$ generate $\mathcal{C}$. Then $\mathcal{C}$ has a unique t-structure $(\mathcal{C}^{\geq 0}, \mathcal{C}^{\leq 0})$ such that $\nabla_i \in \mathcal{C}^{\geq 0}, \triangle_i \in \mathcal{C}^{\leq 0}$. It is given by $\mathcal{C}^{\geq 0} = \langle \{ \nabla_i[d], i \in I, d \leq 0 \} \rangle, \mathcal{C}^{\leq 0} = \langle \{ \triangle_i[d], i \in I, d \geq 0 \} \rangle $. \end{proposition}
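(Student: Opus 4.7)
The plan is to verify that the explicit pair $(\mathcal{C}^{\leq 0}, \mathcal{C}^{\geq 0})$ satisfies the three axioms of a $t$-structure — shift-closure, Hom-orthogonality, and existence of truncation triangles — and then to deduce uniqueness. The shift-closure properties $\mathcal{C}^{\leq 0}[1] \subset \mathcal{C}^{\leq 0}$ and $\mathcal{C}^{\geq 0}[-1] \subset \mathcal{C}^{\geq 0}$ are immediate from the definitions: each generating family $\{\triangle_i[d] : d \geq 0\}$ and $\{\nabla_i[d] : d \leq 0\}$ is itself stable under the relevant shift, and the extension closure operation $\langle \cdot \rangle$ inherits this stability.

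For the orthogonality axiom $\text{Hom}(X,Y) = 0$ when $X \in \mathcal{C}^{\leq 0}$ and $Y \in \mathcal{C}^{\geq 1}$, I would dévissage via the long exact sequences attached to the defining iterated $\star$-products on both $X$ and $Y$, reducing to the base case $\text{Hom}^{<0}(\triangle_i, \nabla_j) = 0$ for all $i,j \in I$. I would then proceed by induction on $i$ in the well-ordering of $I$: the fourth quasi-exceptional axiom $\triangle_i \simeq \nabla_i$ in $\mathcal{C}/\mathcal{C}_{<i}$ yields a distinguished triangle in which $\triangle_i$ is sandwiched between $\nabla_i$ and an object of $\mathcal{C}_{<i}$; applying $\text{Hom}^{<0}(-, \nabla_j)$ to this triangle combines the stated axioms $\text{Hom}^{\bullet}(\nabla_i, \nabla_j) = 0$ for $i<j$, $\text{Hom}^{<0}(\nabla_i, \nabla_i) = 0$, and $\text{Hom}(\triangle_i, \nabla_j) = 0$ for $i > j$ with the inductive hypothesis for strictly smaller indices to produce the desired vanishing.

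The main obstacle is constructing truncation triangles: for each $X \in \mathcal{C}$ we must produce a distinguished triangle $A \to X \to B \to A[1]$ with $A \in \mathcal{C}^{\leq 0}$ and $B \in \mathcal{C}^{\geq 1}$. Since $\{\nabla_i\}$ generates $\mathcal{C}$, every $X$ admits a finite iterated $\star$-presentation in shifts $\nabla_{i_k}[d_k]$, and I would induct on its length. For the base case $X = \nabla_i[d]$ with $d > 0$, the congruence modulo $\mathcal{C}_{<i}$ produces a triangle whose left term lies in $\langle \triangle_{i'}[d'] : d' \geq 0 \rangle$ and whose right term is supported on strictly smaller indices, closing an inner induction on $i$; for $d \leq 0$, we already have $X \in \mathcal{C}^{\geq 0}$, and a parallel inner induction on $i$ via the same congruence places $X$ also in $\mathcal{C}^{\leq 0}$, so $B = 0$ suffices. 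The inductive step on the presentation length stitches together the truncations of the two $\star$-factors using the octahedral axiom, with the orthogonality from the previous paragraph ensuring that the resulting connecting map lands in the correct aisle. Uniqueness then follows because any $t$-structure satisfying $\nabla_i \in \mathcal{C}^{\geq 0}$ and $\triangle_i \in \mathcal{C}^{\leq 0}$ must contain our explicit extension closures inside its aisles, and two $t$-structures with one aisle contained in the other must coincide.
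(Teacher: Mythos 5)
Your overall architecture---reduce the orthogonality axiom by d\'evissage to $\text{Hom}^{<0}(\triangle_i,\nabla_j)=0$, prove that via the congruence $\triangle_i\simeq\nabla_i \ (\text{mod }\mathcal{C}_{<i})$ together with the $\nabla$-orthogonality, build truncation triangles by induction on a $\star$-presentation glued with the octahedral axiom, and deduce uniqueness from containment of aisles---is the standard one, and it is essentially the argument of Proposition 1 of \cite{bezrukavnikov}, which the paper cites without reproducing. The orthogonality and uniqueness parts are sound, modulo one point you elide: upgrading the Verdier-quotient isomorphism $\triangle_i\simeq\nabla_i$ to an honest distinguished triangle in $\mathcal{C}$ with third term in $\mathcal{C}_{<i}$ requires the observation that $\text{Hom}^{\bullet}(C,\nabla_i)=0$ for all $C\in\mathcal{C}_{<i}$ (which follows from the first axiom); this is a lemma in \cite{bezrukavnikov}, not a formality.

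The genuine error is in your base case for the truncation triangles when $d\le 0$. You assert that for $X=\nabla_i[d]$ with $d\le 0$ an inner induction ``places $X$ also in $\mathcal{C}^{\leq 0}$, so $B=0$ suffices.'' This is false. For $d<0$ we have $X\in\mathcal{C}^{\geq 1}$, so if $X$ also lay in $\mathcal{C}^{\leq 0}$ the orthogonality you just proved would force $\text{Hom}(X,X)=0$, i.e.\ $X=0$, contradicting $\text{End}(\nabla_i)=\mathbb{C}$; the correct trivial truncation there is $A=0$, $B=X$. For $d=0$, membership of $\nabla_i$ in $\mathcal{C}^{\leq 0}$ would put $\nabla_i$ in the heart, which fails in general: the costandard object of the heart is $\tau_{\leq 0}\nabla_i$, typically a proper truncation of $\nabla_i$ (compare $j_*\mathcal{L}[\dim]$ versus its intermediate extension in the perverse setting). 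The repair is to prove the single uniform claim that $\nabla_i[d]\in\mathcal{C}^{\leq 0}\star\mathcal{C}^{\geq 1}$ for every $d\in\mathbb{Z}$: for $d\le -1$ this is trivial ($A=0$), and for $d\ge 0$ your own $d>0$ argument applies verbatim---the triangle from the congruence exhibits $\nabla_i[d]$ as an extension of $\triangle_i[d]\in\mathcal{C}^{\leq 0}$ by an object of $\mathcal{C}_{<i}$, which lies in $\mathcal{C}^{\leq 0}\star\mathcal{C}^{\geq 1}$ by induction on the finitely many indices it involves. One then concludes because $\mathcal{C}^{\leq 0}\star\mathcal{C}^{\geq 1}$ is closed under extensions and shifts in the ambient triangulated category, which is exactly where the orthogonality and the octahedral axiom are used. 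With that correction the proof goes through.
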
 \begin{proof} See Proposition $1$ in [4]. \end{proof}

We define a quasi-hereditary category in preparation for the next result, which shows that the heart of the above t-structure is quasi-hereditary. 

\begin{definition} An abelian category $\mathcal{A}$ has simple objects $\{S_i\}$ indexed by an $I$ is quasi-hereditary if it satisfies the following properties: \begin{itemize} \item For each simple $S_i$, there is an object $A_i$ with a non-zero morphism $\alpha: A_i \rightarrow S_i$, known as its ``standard cover'', such that: \begin{itemize} \item $\text{ker}(\alpha_i) \in \mathcal{A}_{<i}$ \item $\text{Hom}(A_i, S_i) = \text{Ext}^1(A_i, S_i) = 0$ \end{itemize} \item For each simple $S_i$, there is an object $B_i$ with a non-zero morphism $\beta_i: S_i \rightarrow B$, known as its ``costandard hull", such that \begin{itemize} \item $\text{coker}(\beta_i) \in \mathcal{A}_{<i}$.  \item $\text{Hom}(S_i, B_i)=\text{Ext}^1(S_i, B_i)=1$. \end{itemize} \end{itemize} \end{definition}

Denote by $\tau_{\geq 0}, \tau_{\leq 0}$ the truncation functors associated to the above t-structure. Let $\triangle'_i = \tau_{\geq 0}(\triangle_i), \nabla'_i= \tau_{\leq 0}(\nabla_i)$ be objects in the heart of the $t$-structure. 

\begin{proposition} There exists a morphism $\theta_i: \triangle'_i \rightarrow \nabla'_i$, such that $S_i := \text{im}(\theta_i)$ is a simple object in $C^{\geq 0} \cap C^{\leq 0}$. Each simple object in $C^{\geq 0} \cap C^{\leq 0}$ is isomorphic to $S_i$ for some $i$. The map $\triangle'_i \rightarrow S_i$ is a standard cover of $S_i$, and the inclusion $S_i \rightarrow \nabla'_i$ is a costandard hull for $S_i$. Thus $C^{\geq 0} \cap C^{\leq 0}$ is a quasi-hereditary category. \end{proposition}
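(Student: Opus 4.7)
The plan is to construct $\theta_i$ from a lift of the isomorphism $\triangle_i \simeq \nabla_i$ in the quotient $\mathcal{C}/\mathcal{C}_{<i}$, set $S_i := \text{im}(\theta_i)$, and then verify simplicity, exhaustiveness, and the quasi-hereditary conditions using the Hom-vanishing axioms of the quasi-exceptional set.

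First I would construct $\theta_i$. The dualizability axiom provides an isomorphism $\triangle_i \simeq \nabla_i$ in $\mathcal{C}/\mathcal{C}_{<i}$. A representing roof $\triangle_i \leftarrow Z \to \nabla_i$, whose left arrow has cone in $\mathcal{C}_{<i}$, can be converted into an honest morphism $\phi: \triangle_i \to \nabla_i$ in $\mathcal{C}$ by octahedral manipulation, with the cone of $\phi$ still in $\mathcal{C}_{<i}$. Because $\triangle_i \in \mathcal{C}^{\leq 0}$ and $\nabla_i \in \mathcal{C}^{\geq 0}$, we have $H^0(\triangle_i) = \triangle'_i$ and $H^0(\nabla_i) = \nabla'_i$, so applying $H^0$ to $\phi$ yields $\theta_i : \triangle'_i \to \nabla'_i$, with both $\ker(\theta_i)$ and $\text{coker}(\theta_i)$ lying in $\mathcal{A}_{<i}$, the subcategory of the heart with composition factors from $\{S_j\}_{j<i}$.

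Setting $S_i := \text{im}(\theta_i)$, I would establish simplicity by a dichotomy. Any simple subobject $T \subseteq S_i$ either belongs to $\mathcal{A}_{<i}$, which contradicts the axiom $\text{Hom}^{\bullet}(\nabla_j, \nabla_i) = 0$ for $j<i$ after passing to composition factors of $T$ and composing with $T \hookrightarrow S_i \hookrightarrow \nabla'_i$, or projects to a nonzero subobject of $\overline{S_i} \cong \overline{\nabla_i}$ in $\mathcal{C}/\mathcal{C}_{<i}$. In the second case, $\text{End}(\overline{\nabla_i}) = \mathbb{C}$ forces $T = S_i$. For exhaustiveness, given any simple $S$ in the heart, let $i$ be minimal with $S \in \mathcal{C}_{\leq i}$; its nonzero image in $\mathcal{C}/\mathcal{C}_{<i}$ must coincide with $\overline{S_i}$ by the same endomorphism argument, whence $S \cong S_i$. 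Distinct $S_i$ and $S_j$ are non-isomorphic since $\text{Hom}(S_i, S_j) = 0$ for $i \neq j$ reduces, via $S_i \subseteq \nabla'_i$ and $\triangle'_j \twoheadrightarrow S_j$, to the axiomatic vanishings.

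The main obstacle will be verifying the standard cover and costandard hull conditions, specifically $\text{Ext}^1(\triangle'_i, S_i) = 0$ and $\text{Ext}^1(S_i, \nabla'_i) = 0$. For the costandard side I would apply $\text{Hom}(S_i, -)$ to the truncation triangle $\nabla'_i \to \nabla_i \to \tau_{\geq 1}\nabla_i$: since $S_i \in \mathcal{C}^{\geq 0}$ and $\tau_{\geq 1}\nabla_i \in \mathcal{C}^{\geq 1}$, the $t$-structure kills $\text{Hom}(S_i, \tau_{\geq 1}\nabla_i[k])$ for $k \leq 0$, reducing $\text{Ext}^1(S_i, \nabla'_i)$ to $\text{Ext}^1(S_i, \nabla_i)$. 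The embedding $S_i \hookrightarrow \nabla'_i$ together with the axiomatic vanishings on $\text{Hom}^{\bullet}(\nabla_j, \nabla_i)$ then reduces this to a contribution from $S_i$ itself, which vanishes in degree $1$, while the degree-$0$ contribution $\text{Hom}(S_i, \nabla'_i) = \mathbb{C}$ supplies the required costandard hull condition. The standard cover side is symmetric, using $\triangle_i$ and its axiomatic vanishings. The subtlest bookkeeping point throughout is independence of $S_i$ from the choice of lift $\phi$, which follows from the $1$-dimensionality of $\text{Hom}_{\mathcal{C}/\mathcal{C}_{<i}}(\triangle_i, \nabla_i) = \mathbb{C}$.
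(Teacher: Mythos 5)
First, a point of reference: the paper does not actually prove this proposition — it sits in the ``Recollections'' subsection as a summary of results from \cite{bezrukavnikov}, so the only proof to compare against is the one in that source. Your overall strategy does match it: use the right-orthogonality $\Hom^{\bullet}(\mathcal{C}_{<i},\nabla_i)=0$ (a consequence of the first axiom) to lift the isomorphism $\triangle_i\simeq\nabla_i$ in $\mathcal{C}/\mathcal{C}_{<i}$ to an honest morphism $\triangle_i\to\nabla_i$, truncate to obtain $\theta_i$, take its image, and induct along the filtration. One caveat: the assertions that $\ker(\theta_i)$ and $\mathrm{coker}(\theta_i)$ lie in $\mathcal{A}_{<i}$, and the repeated use of ``$T\in\mathcal{A}_{<i}$ implies $T\in\mathcal{C}_{<i}$'', both rest on the compatibility of the $t$-structure with the filtration $\mathcal{C}_{\leq i}$ (that the $t$-structure restricts to each $\mathcal{C}_{\leq i}$ and that the heart of $\mathcal{C}_{<i}$ is exactly $\mathcal{A}_{<i}$). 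This is the technical core of Bezrukavnikov's induction and you assume it silently.

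The genuine gap is in the $\mathrm{Ext}^1$ step. Your reduction of $\mathrm{Ext}^1(S_i,\nabla'_i)$ ``to a contribution from $S_i$ itself, which vanishes in degree $1$'' does not close: chasing the truncation triangles exactly as you propose yields injections $\mathrm{Ext}^1(S_i,\nabla'_i)\hookrightarrow\Hom(S_i,\nabla_i[1])\cong\Hom(\triangle'_i,\nabla_i[1])\hookrightarrow\Hom(\triangle_i,\nabla_i[1])\cong\Hom(\nabla_i,\nabla_i[1])$, and this last group is not controlled by the axioms — only $\Hom^{<0}(\nabla_i,\nabla_i)$ and $\Hom^{0}(\nabla_i,\nabla_i)$ are constrained, and in the geometric application the positive self-extensions of $\nabla_\lambda$ are typically nonzero. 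The underlying problem is that the definition of standard cover and costandard hull as transcribed in the paper is garbled (as written, $\Hom(A_i,S_i)=0$ contradicts the existence of the nonzero map $\alpha$), and you are chasing the mis-stated condition. The conditions actually verified in \cite{bezrukavnikov} are $\Hom(A_i,S)=\mathrm{Ext}^1(A_i,S)=0$ and $\Hom(S,B_i)=\mathrm{Ext}^1(S,B_i)=0$ for simple $S=S_j$ with $j<i$; these do follow, by precisely the truncation-triangle argument you use, from $\Hom^{\bullet}(\mathcal{C}_{<i},\nabla_i)=0$ and the dual vanishing $\Hom^{\bullet}(\triangle_i,\mathcal{C}_{<i})=0$. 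The statement $\mathrm{Ext}^1(\triangle'_i,\nabla'_j)=0$, which would rescue your route, is a consequence of quasi-heredity rather than an input to it, so it cannot be invoked at this stage.
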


\subsection{Main Results}

Following the techniques in \cite{bezrukavnikov}, we may now prove analogous results about $\mathfrak{N}$ by applying the results in Section $1$. However, we will need to use the following twisted action of $W$ on $\Lambda$ instead of the usual one:

\begin{definition} Let $\theta = \frac{1}{2}(\epsilon_1 + \epsilon_2 + \cdots + \epsilon_n)$. Given $w \in W, \lambda \in \Lambda$, defined the twisted action $w \cdot \lambda = w(\lambda + \theta) - \theta$. \end{definition}

\begin{definition} Let $D^b(Coh^G(\mathfrak{N}))$ denote the derived bounded category of $G$-equivariant coherent sheaves on $\mathfrak{N}$. For $\lambda \in \Lambda$, define $\nabla_{\lambda} = R \pi_* \mathcal{O}_{\widetilde{\mathfrak{N}}}(\lambda)[d]$, where $d = \frac{\text{dim } \mathfrak{N}}{2}$. Given $S \subset \Lambda$, denote by $\mathfrak{D}_S$ to be the smallest strictly full triangulated subcategory of $D^b(Coh^G(\mathfrak{N}))$ containing the objects $\nabla_{\lambda}$ for $\lambda \in S$. \end{definition}
\begin{lemma}[Grothendieck-Serre Duality] \label{duality} The (equivariant) dualizing sheaf on $\widetilde{\mathfrak{N}}$ is isomorphic to $\omega_{\widetilde{\mathfrak{N}}}[2d]$; the functor $\mathcal{S}:D^b(Coh^G(\mathfrak{N})) \rightarrow D^b(Coh^G(\mathfrak{N}))^{op}$ of Grothendieck-Serre duality is an anti-autoequivalence satisfying $\mathcal{S}(\nabla_{\lambda}) = \nabla_{-\lambda - 2\theta}$. \end{lemma}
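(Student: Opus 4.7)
The plan is to apply Grothendieck-Serre duality for the proper morphism $\pi:\widetilde{\mathfrak{N}}\to\mathfrak{N}$ and then use Corollary \ref{canonicalvanishing} to identify $\omega_{\widetilde{\mathfrak{N}}}$ with a specific line bundle. First, since $\widetilde{\mathfrak{N}} = G\times_B ((\mathbb{V}\oplus\mathfrak{s})^+)$ is a vector bundle over $G/B$, it is smooth of dimension $2d$, and therefore its (equivariant) dualizing complex is $\omega_{\widetilde{\mathfrak{N}}}[2d]$. Since $\mathfrak{N}$ is a closed subvariety of the smooth affine $G$-variety $\mathbb{V}\oplus\mathfrak{s}$, it admits an equivariant dualizing complex $\omega_{\mathfrak{N}}^\bullet$; moreover $\pi^!\omega_{\mathfrak{N}}^\bullet$ is again a dualizing complex on $\widetilde{\mathfrak{N}}$, and both dualizing complexes are concentrated in the same degree (since $\dim\widetilde{\mathfrak{N}} = \dim\mathfrak{N} = 2d$), so $\pi^!\omega_{\mathfrak{N}}^\bullet \simeq \omega_{\widetilde{\mathfrak{N}}}[2d]$.

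Next I would invoke Grothendieck duality for the proper morphism $\pi$: for any $\mathcal{F}\in D^b(Coh^G(\widetilde{\mathfrak{N}}))$,
\begin{align*}
R\mathcal{H}om(R\pi_*\mathcal{F},\,\omega_{\mathfrak{N}}^\bullet) \;\simeq\; R\pi_*\,R\mathcal{H}om(\mathcal{F},\,\pi^!\omega_{\mathfrak{N}}^\bullet) \;\simeq\; R\pi_*\,R\mathcal{H}om(\mathcal{F},\,\omega_{\widetilde{\mathfrak{N}}}[2d]).
\end{align*}
Applying this with $\mathcal{F} = \mathcal{O}_{\widetilde{\mathfrak{N}}}(\lambda)[d]$, and using that $\mathcal{O}_{\widetilde{\mathfrak{N}}}(\lambda)$ is a line bundle so that $R\mathcal{H}om(\mathcal{O}_{\widetilde{\mathfrak{N}}}(\lambda),\omega_{\widetilde{\mathfrak{N}}}) = \mathcal{O}_{\widetilde{\mathfrak{N}}}(-\lambda)\otimes\omega_{\widetilde{\mathfrak{N}}}$, I obtain
\begin{align*}
\mathcal{S}(\nabla_\lambda) \;=\; R\pi_*\bigl(\mathcal{O}_{\widetilde{\mathfrak{N}}}(-\lambda)\otimes\omega_{\widetilde{\mathfrak{N}}}\bigr)[2d-d].
\end{align*}
By Corollary \ref{canonicalvanishing}, $\omega_{\widetilde{\mathfrak{N}}} \simeq \mathcal{O}_{\widetilde{\mathfrak{N}}}(-\epsilon_1-\cdots-\epsilon_n) = \mathcal{O}_{\widetilde{\mathfrak{N}}}(-2\theta)$, so tensoring gives $\mathcal{O}_{\widetilde{\mathfrak{N}}}(-\lambda-2\theta)$, and $\mathcal{S}(\nabla_\lambda) = R\pi_*\mathcal{O}_{\widetilde{\mathfrak{N}}}(-\lambda-2\theta)[d] = \nabla_{-\lambda-2\theta}$.

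Finally, that $\mathcal{S}$ is an anti-autoequivalence of $D^b(Coh^G(\mathfrak{N}))$ is the standard statement of Grothendieck-Serre duality once one has a dualizing complex; the $G$-equivariant version goes through because $\pi$ is $G$-equivariant and all line bundles involved carry canonical $G$-linearizations. I expect the only delicate point is the identification $\pi^!\omega_{\mathfrak{N}}^\bullet \simeq \omega_{\widetilde{\mathfrak{N}}}[2d]$, i.e.\ that the natural shift is the same on the resolution and on the base; this is where the equality of dimensions $\dim\widetilde{\mathfrak{N}} = \dim\mathfrak{N} = 2d$ (and the fact that $\mathfrak{N}$ is equidimensional, embeddable in a smooth variety) is used. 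Once this compatibility is in hand the remainder is a short line-bundle computation using Corollary \ref{canonicalvanishing}.
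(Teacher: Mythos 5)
Your proposal is correct and follows essentially the same route as the paper: identify the dualizing complex of the smooth variety $\widetilde{\mathfrak{N}}$ as $\omega_{\widetilde{\mathfrak{N}}}[2d]$, use compatibility of Grothendieck--Serre duality with $R\pi_*$ for the proper map $\pi$, and conclude via $\omega_{\widetilde{\mathfrak{N}}}\simeq\mathcal{O}_{\widetilde{\mathfrak{N}}}(-2\theta)$ from Corollary \ref{canonicalvanishing}. The only difference is cosmetic: you spell out the identification $\pi^!\omega_{\mathfrak{N}}^{\bullet}\simeq\omega_{\widetilde{\mathfrak{N}}}[2d]$ that the paper leaves implicit in the phrase ``duality commutes with $R\pi_*$.''
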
 \begin{proof} Since $H^i(\widetilde{\mathfrak{N}}, \omega_{\widetilde{\mathfrak{N}}}) = 0$ for $i>0$ using \ref{canonicalvanishing}, the equivariant dualizing sheaf on $\widetilde{\mathfrak{N}}$ is isomorphic to $\omega_{\widetilde{\mathfrak{N}}}[2d]$. Since $\omega_{\widetilde{\mathfrak{N}}} \simeq \mathcal{O}_{\widetilde{\mathfrak{N}}}(-\epsilon_1 - \cdots - \epsilon_n)$ (again from \ref{canonicalvanishing}), we have \begin{align*} R\underline{Hom}(\mathcal{O}_{\widetilde{\mathfrak{N}}}(\lambda)[d],\omega_{\widetilde{\mathfrak{N}}}[2d]) = \mathcal{O}_{\widetilde{\mathfrak{N}}}(-\lambda-\epsilon_1-\cdots-\epsilon_n)[d] \end{align*} Recalling that by definition, $\mathcal{S}(\mathcal{F}) = R\underline{Hom}(\mathcal{F},\mathbb{D}C)$, where $\mathbb{D}C$ is the equivariant dualizing sheaf; and that Grothendieck-Serre duality commutes with $R \pi_*$ since $\pi$ is proper, the result follows. \end{proof}

\begin{definition} For $\mu \in \Lambda$, let $\widetilde{conv}(\mu)$ denote the intersection of the convex hull of the set $\{w \cdot \mu | w \in W\}$ in $\Lambda \otimes_{\mathbb{Z}} \mathbb{R}$ with $\Lambda$, and denote by $\widetilde{conv}^0(\mu)$ to be the complement of $\{w \cdot \mu | w \in W \}$ in $\widetilde{conv}(\mu)$. \end{definition}

\begin{proposition} \label{mod} For $w \in W$, $\nabla_{w \cdot \lambda} \simeq \nabla_{\lambda} (\text{mod } \mathfrak{D}_{\widetilde{conv}^0 (\lambda)})$. \end{proposition}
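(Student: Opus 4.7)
The plan is to reduce by induction on the length $\ell(w)$ in $W$ to the case of a simple reflection $w = s_\alpha$, and then to build a distinguished triangle on $\widetilde{\mathfrak{N}}$ whose image under $R\pi_*[d]$ realizes the congruence in $D^b(Coh^G(\mathfrak{N}))$. For the reduction, note that the twisted action $w \cdot (-)$ is affine-linear on $\Lambda \otimes_{\mathbb{Z}} \R$ and preserves the twisted orbit $W \cdot \lambda$, so it preserves the convex hull $\widetilde{conv}(\lambda)$ and the complement $\widetilde{conv}^0(\lambda)$. Consequently, for any $\lambda' \in W \cdot \lambda$ we have $\widetilde{conv}^0(\lambda') = \widetilde{conv}^0(\lambda)$. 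Given a reduced expression $w = s_{\alpha_k} \cdots s_{\alpha_1}$, the intermediate weights $\lambda_i = s_{\alpha_i} \cdots s_{\alpha_1} \cdot \lambda$ all share the same error subcategory, so applying the simple-reflection case at each step and composing by the octahedral axiom completes the reduction.

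For $w = s_\alpha$, set $m = \langle \lambda + \theta, \alpha^\vee \rangle$; the case $m = 0$ is trivial, and after swapping $\lambda$ with $s_\alpha \cdot \lambda$ if needed we may assume $m > 0$. I would then build a Koszul-type resolution on $\widetilde{\mathfrak{N}}$ from the $\mathbb{P}^1$-fibration $p_\alpha : G/B \to G/P_\alpha$ (with $P_\alpha$ the minimal parabolic for $\alpha$) pulled back via $p : \widetilde{\mathfrak{N}} \to G/B$. Concretely, for weights $\nu$ interpolating between $s_\alpha \cdot \lambda$ and $\lambda$ (stepping by $\alpha$ for short $\alpha$, and by $\alpha^\vee = \epsilon_n$ for the long root $\alpha = 2\epsilon_n$), one obtains short exact sequences
\begin{align*} 0 \to \mathcal{O}_{\widetilde{\mathfrak{N}}}(\nu_1) \to p^* p_\alpha^* \mathcal{F}_\nu \to \mathcal{O}_{\widetilde{\mathfrak{N}}}(\nu_2) \to 0 \end{align*}
arising from the relative Euler (or tautological) sequence of the $\mathbb{P}^1$-bundle, where $\mathcal{F}_\nu$ is an appropriate rank-$2$ vector bundle on $G/P_\alpha$. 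Applying $R\pi_*[d]$ and splicing the resulting triangles by the octahedral axiom produces a distinguished triangle $\nabla_{s_\alpha \cdot \lambda} \to \mathcal{T} \to \nabla_\lambda \to [1]$ in $D^b(Coh^G(\mathfrak{N}))$.

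The main obstacle is verifying $\mathcal{T} \in \mathfrak{D}_{\widetilde{conv}^0(\lambda)}$. Each intermediate weight $\nu$ lies strictly between $\lambda$ and $s_\alpha \cdot \lambda$ on the line segment they span, hence in $\widetilde{conv}(\lambda)$, and generically is not in $W \cdot \lambda$; and since $\mathcal{F}_\nu$ is pulled back from $G/P_\alpha$ it admits a $B$-equivariant filtration whose associated graded consists of line bundles whose weights also lie in $\widetilde{conv}^0(\lambda)$. Two technical subtleties require care. First, $(\mathbb{V} \oplus \mathfrak{s})^+$ fails to be $\mathfrak{g}_{-\alpha}$-stable for any simple $\alpha$ (for short $\alpha$, $e_{-\alpha}$ sends the weight-$\alpha$ summand of $\mathfrak{s}$ into the zero-weight summand), so $\pi$ does not factor through a naive partial exotic Springer resolution, and the middle terms must be analyzed by direct sheaf-theoretic means rather than by appealing to such a factorization. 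Second, for the long simple root $\alpha = 2\epsilon_n$ the twisted shift $s_\alpha \cdot \lambda = \lambda - (2\lambda_n + 1)\epsilon_n$ is an integer multiple of $\alpha^\vee = \epsilon_n$ rather than of $\alpha$, forcing the chain to include finer one-step shifts of size $\alpha^\vee$; the use of the $\theta$-twisted action (instead of the usual $\rho$-shifted dot action) is forced by the canonical bundle $\omega_{\widetilde{\mathfrak{N}}} = \mathcal{O}(-2\theta)$ (Corollary \ref{canonicalvanishing}), and is the key structural difference from Bezrukavnikov's setting. Degenerate cases in which an intermediate $\nu$ coincides with an element of $W \cdot \lambda$ are absorbed by a secondary induction on $m$.
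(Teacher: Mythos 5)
Your overall architecture matches the paper's: induction on $\ell(w)$ reduces to a simple reflection, and you correctly identify the genuine structural subtleties (the $\theta$-twisted action forced by $\omega_{\widetilde{\mathfrak{N}}}=\mathcal{O}_{\widetilde{\mathfrak{N}}}(-2\theta)$, the need to step by $\epsilon_n$ rather than $2\epsilon_n$ for the long root, and the failure of $(\mathbb{V}\oplus\mathfrak{s})^+$ to be $P_\alpha$-stable). But the core mechanism is missing. The sequences you propose, $0\to\mathcal{O}_{\widetilde{\mathfrak{N}}}(\nu_1)\to p^*p_\alpha^*\mathcal{F}_\nu\to\mathcal{O}_{\widetilde{\mathfrak{N}}}(\nu_2)\to 0$, are tautological from the point of view of the congruence: the associated graded of $p_\alpha^*\mathcal{F}_\nu$ consists exactly of the two line bundles appearing as outer terms, so when you splice, the extreme subquotients $\mathcal{O}(\lambda)$ and $\mathcal{O}(s_\alpha\cdot\lambda)$ necessarily appear among the constituents of $\mathcal{T}$, and your claim that the filtration of $\mathcal{F}_\nu$ contributes only weights in $\widetilde{conv}^0(\lambda)$ fails precisely there. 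You flag the real obstruction yourself -- $\pi$ does not factor through a partial resolution attached to $P_\alpha$, so "the middle terms must be analyzed by direct sheaf-theoretic means" -- but you never supply those means, and that is where the entire content of the proof lives.

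The paper's resolution of exactly this obstruction is the following. Deleting the single weight space of weight $\widetilde\alpha$ from $(\mathbb{V}\oplus\mathfrak{s})^+$ (the very weight space causing the instability you observed) produces a $P_\alpha$-submodule $(\mathbb{V}\oplus\mathfrak{s})^+_\alpha$, so $\widetilde{\mathfrak{N}}_\alpha=G\times_B(\mathbb{V}\oplus\mathfrak{s})^+_\alpha$ is a divisor in $\widetilde{\mathfrak{N}}$ that fibers in $\mathbb{P}^1$'s over $G\times_{P_\alpha}(\mathbb{V}\oplus\mathfrak{s})^+_\alpha$, and $\pi$ restricted to this divisor factors through that fibration. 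Taking $\mathcal{V}_{\lambda'}=p_{\alpha*}p_\alpha^*\mathcal{O}_{G/B}(\lambda')$ with $\langle\lambda',\check{\alpha}\rangle=n-1$, the twist $p^*\mathcal{V}_{\lambda'}(\lambda-\lambda'-\alpha)$ restricts to a sum of copies of $\mathcal{O}_{\mathbb{P}^1}(-1)$ on these fibers, so its restriction to the divisor is killed by $R\pi_*$. Feeding this vanishing into the divisor sequence $0\to\mathcal{O}_{\widetilde{\mathfrak{N}}}(-\widetilde\alpha)\to\mathcal{O}_{\widetilde{\mathfrak{N}}}\to\iota_{\alpha*}\mathcal{O}_{\widetilde{\mathfrak{N}}_\alpha}\to 0$ (using $\mathcal{O}(\widetilde{\mathfrak{N}}_\alpha)=\mathcal{O}_{\widetilde{\mathfrak{N}}}(-\widetilde\alpha)$, Lemma \ref{divisor}) yields the key isomorphism $\pi_*p^*\mathcal{V}_{\lambda'}(\lambda-\lambda'-\alpha+\widetilde\alpha)\simeq\pi_*p^*\mathcal{V}_{\lambda'}(\lambda-\lambda'-\alpha)$. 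The two sides have filtrations differing only in their extreme subquotients ($\mathcal{O}(\lambda)$ versus $\mathcal{O}(\lambda-n\alpha)$ in the short-root case), with all shared interior weights lying in $\widetilde{conv}^0(\lambda)$, and comparing them gives $\nabla_{s_\alpha\cdot\lambda}\simeq\nabla_\lambda$ modulo $\mathfrak{D}_{\widetilde{conv}^0(\lambda)}$. Without this divisor-plus-vanishing step your construction does not produce a triangle whose middle term lies in the lower subcategory, so the proposal as written has a genuine gap at the decisive point.
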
 \begin{proof} Let $\alpha$ be a simple root for $G$ (i.e. $\alpha \in \{ \epsilon_1 - \epsilon_2, \cdots , \epsilon_{n-1}-\epsilon_n, 2\epsilon_n \}$); let $\widetilde{\alpha} = \alpha$ if $\alpha \neq 2\epsilon_n$, and $\widetilde{\alpha}=\epsilon_n$ if $\alpha = 2 \epsilon_n$. By induction on the length of the word $w \in W$, it suffices to prove that $\nabla_{\lambda} \simeq \nabla_{s_{\alpha} \cdot \lambda}(\text{mod } \mathfrak{D}_{\widetilde{conv}^0 (\lambda)})$, for $\lambda$ satisfying $\langle \lambda, \check{\alpha} \rangle = n > 0$. Let $P_{\alpha}$ be the minimal parabolic in $G$ corresponding to the root $\alpha$; denote by $p_{\alpha}$ the projection $G/B \rightarrow G/P_{\alpha}$. Since $G$ is simply connected, we may choose $\lambda' \in \Lambda$ such that $\langle \lambda', \check{\alpha} \rangle = n-1$. Define $\mathcal{V}_{\lambda'} = p_{\alpha, *} p_{\alpha}^* \mathcal{O}_{G/B}(\lambda')$ to be the $G$-equivariant vector bundle on $G/B$ with a filtration whose subquotients are $\mathcal{O}_{G/B}(\lambda'), \cdots , \mathcal{O}_{G/B}(\lambda' - (n-1)\alpha)$.

Let $(\mathbb{V} \oplus \mathfrak{s})^+_{\alpha}$ be the $P_{\alpha}$-submodule of $(\mathbb{V} \oplus \mathfrak{s})^+$ obtained by taking the sum of all weight spaces excluding the weight space corresponding to $\widetilde{\alpha}$. Let $\widetilde{\mathfrak{N}}_{\alpha} = G \times_B (\mathbb{V} \oplus \mathfrak{s})^+_{\alpha}$, and  $\iota_{\alpha}: \widetilde{\mathfrak{N}}_{\alpha} \hookrightarrow \widetilde{\mathfrak{N}}$ be the embedding of this divisor. Let $\pi_{\alpha}: \widetilde{\mathfrak{N}}_{\alpha} = G \times_B (\mathbb{V} \oplus \mathfrak{s})^+_{\alpha} \rightarrow G \times_{P_{\alpha}} (\mathbb{V} \oplus \mathfrak{s})^+_{\alpha}$ denote the projection, whose fibres are isomorphic to $\mathbb{P}^1$. Since $\langle \lambda - \lambda' - \alpha, \check{\alpha} \rangle = -1$, the restriction of $p^* \mathcal{V}_{\lambda'}(\lambda - \lambda' - \alpha)$ is isomorphic to the sum of several copies of $\mathcal{O}_{\mathbb{P}^1}(-1)$ when restricted to any fibre of $\pi_{\alpha}$. Thus we have: \begin{align*} \pi_{\alpha *}(\mathcal{O}_{\widetilde{\mathfrak{N}_{\alpha}}} \otimes p^*(\mathcal{V}_{\lambda'})(\lambda - \lambda' - \alpha)) = 0 \\ \pi_{*}(\mathcal{O}_{\widetilde{\mathfrak{N}_{\alpha}}} \otimes p^*(\mathcal{V}_{\lambda'})(\lambda - \lambda' - \alpha)) = 0 \end{align*} 
Under the embedding of the divisor $\iota_{\alpha}$, $\mathcal{O}(\widetilde{\mathfrak{N}}_{\alpha})=\mathcal{O}_{\widetilde{\mathfrak{N}}}(-\widetilde{\alpha})$ (see Lemma \ref{divisor} for a proof of this statement), giving us the exact sequence:
\begin{align*} 0 \rightarrow \mathcal{O}_{\widetilde{\mathfrak{N}}}(-\widetilde{\alpha}) \rightarrow  \mathcal{O}_{\widetilde{\mathfrak{N}}} \rightarrow \iota_{\alpha*} \mathcal{O}_{\widetilde{\mathfrak{N}}_{\alpha}} \rightarrow 0 \end{align*} Since $\pi_{*}(\mathcal{O}_{\widetilde{\mathfrak{N}_{\alpha}}} \otimes p^*(\mathcal{V}_{\lambda'})(\lambda - \lambda' - \alpha)) = 0$, tensoring this exact sequence with $p^* \mathcal{V}_{\lambda'}(\lambda - \lambda' - \alpha)$ and taking direct image under $\pi$, we have: \begin{align*} \pi_* p^*\mathcal{V}_{\lambda'}(\lambda-\lambda'-\alpha+\widetilde{\alpha}) \simeq \pi_* p^* \mathcal{V}_{\lambda'}(\lambda - \lambda' - \alpha) \end{align*} 

\textbf{Case 1:} Suppose $\alpha \neq 2 \epsilon_n$. Then $\alpha = \widetilde{\alpha}$, so: \begin{align*} \pi_* p^*\mathcal{V}_{\lambda'}(\lambda-\lambda') \simeq \pi_* p^* \mathcal{V}_{\lambda'}(\lambda - \lambda' - \alpha) \end{align*} Since $\mathcal{V}_{\lambda'}(\lambda - \lambda')$ has a filtration whose quotients are $\mathcal{O}_{G/B}(\lambda), \cdots , \mathcal{O}_{G/B}(\lambda - (n-1) \alpha)$ , we find that $\pi_* p^*\mathcal{V}_{\lambda'}(\lambda-\lambda')[d] \in [\nabla_{\lambda-(n-1)\alpha}] * \cdots * [\nabla_{\lambda}]$. Since $\nabla_{\lambda - (n-1) \alpha}, \cdots , \nabla_{\lambda - \alpha} \in \mathfrak{D}_{\widetilde{conv}^0(\lambda)}$, this implies $\pi_* p^*\mathcal{V}_{\lambda'}(\lambda-\lambda')[d] \simeq \nabla_{\lambda} (\text{mod } \mathfrak{D}_{\widetilde{conv}^0(\lambda)})$. Similarly, we find that $\pi_* p^*\mathcal{V}_{\lambda'}(\lambda-\lambda')[d] \in [\nabla_{\lambda-n\alpha}] * \cdots * [\nabla_{\lambda-\alpha}]$, and hence $\pi_* p^*\mathcal{V}_{\lambda'}(\lambda-\lambda'-\alpha)[d] \simeq \nabla_{\lambda-n \alpha} (\text{mod } \mathfrak{D}_{\widetilde{conv}^0(\lambda)})$. Comparing, the required identity follows (note $s_{\alpha} \cdot \lambda = \lambda - n \alpha$ in this case): $\nabla_{s_{\alpha} \cdot \lambda} \simeq \nabla_{\lambda} (\text{mod } \mathfrak{D}_{\widetilde{conv}^0(\lambda)})$. 

\textbf{Case 2:} Suppose $\alpha = 2 \epsilon_n$; then $\widetilde{\alpha} = \epsilon_n$, so: \begin{align*} \pi_* p^*\mathcal{V}_{\lambda'}(\lambda-\lambda'-\epsilon_n) \simeq \pi_* p^* \mathcal{V}_{\lambda'}(\lambda - \lambda' - 2\epsilon_n) \end{align*} Note that $s_{2 \epsilon_n} \cdot (\lambda - \epsilon_n) = \lambda - 2n \epsilon_n $. Similarly to the above, since $\pi_* p^*\mathcal{V}_{\lambda'}(\lambda-\lambda'-\epsilon_n)[d] \in [\nabla_{\lambda - (2n-1) \epsilon_n}] * \cdots * [\nabla_{\lambda - \epsilon_n}]$, we find $\pi_* p^*\mathcal{V}_{\lambda'}(\lambda-\lambda'-\epsilon_n)[d] \simeq \nabla_{\lambda - \epsilon_n} (\text{mod }\mathfrak{D}_{\widetilde{conv}^0(\lambda-\epsilon_n)})$. Since $\pi_* p^* \mathcal{V}_{\lambda'}(\lambda - \lambda' - 2\epsilon_n)[d] \in [\nabla_{\lambda - 2n \epsilon_n}] * \cdots * [\nabla_{\lambda -2 \epsilon_n}]$, we find $\pi_* p^*\mathcal{V}_{\lambda'}(\lambda-\lambda'-2\epsilon_n)[d] \simeq \nabla_{\lambda - 2n\epsilon_n} (\text{mod }\mathfrak{D}_{\widetilde{conv}^0(\lambda-\epsilon_n)})$. Comparing, we see that $\nabla_{\lambda - \epsilon_n} \simeq \nabla_{s_{2 \epsilon_n} \cdot (\lambda - \epsilon_n)} (\text{mod }\mathfrak{D}_{\widetilde{conv}^0(\lambda-\epsilon_n)})$. Replacing $\lambda - \epsilon_n$ by $\lambda$, we get the result. \end{proof} 

\begin{lemma} \label{divisor} We have $\mathcal{O}(\widetilde{\mathfrak{N}}_{\alpha})=\mathcal{O}_{\widetilde{\mathfrak{N}}}(-\widetilde{\alpha})$. \end{lemma}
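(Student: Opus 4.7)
The plan is to exhibit $\widetilde{\mathfrak{N}}_\alpha$ as the zero scheme of a canonical transverse section of the line bundle $\mathcal{O}_{\widetilde{\mathfrak{N}}}(-\widetilde{\alpha})$. The starting observation is that $\widetilde{\alpha}$ occurs as a weight of multiplicity one in the $B$-module $(\mathbb{V}\oplus\mathfrak{s})^+$: when $\alpha = 2\epsilon_n$, $\widetilde{\alpha} = \epsilon_n$ comes from $\mathbb{V}$, and when $\alpha = \epsilon_i - \epsilon_{i+1}$ with $i<n$, $\widetilde{\alpha} = \alpha$ comes from $\mathfrak{s}\simeq\wedge^2\mathbb{V}$. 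I would then verify that $\widetilde{\alpha}$ is minimal in the poset of weights of $(\mathbb{V}\oplus\mathfrak{s})^+$, i.e.\ that for every positive root $\gamma$ of $G$, $\widetilde{\alpha} - \gamma$ is not a strictly positive weight of $\mathbb{V}\oplus\mathfrak{s}$. This is a direct case check using the explicit list of positive roots $\epsilon_i\pm\epsilon_j$ ($i<j$), $2\epsilon_i$ and the weights $\epsilon_i$, $\epsilon_i\pm\epsilon_j$ ($i<j$) of $(\mathbb{V}\oplus\mathfrak{s})^+$; it is the only non-routine step in the argument.

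Minimality of $\widetilde{\alpha}$ implies that $(\mathbb{V}\oplus\mathfrak{s})^+_\alpha$ is a $B$-submodule and that the projection onto the $\widetilde{\alpha}$-weight space is $B$-equivariant, giving a short exact sequence of $B$-modules
\begin{equation*}
0 \longrightarrow (\mathbb{V}\oplus\mathfrak{s})^+_\alpha \longrightarrow (\mathbb{V}\oplus\mathfrak{s})^+ \longrightarrow \mathbb{C}_{\widetilde{\alpha}} \longrightarrow 0.
\end{equation*}
Applying $G\times_B(-)$, I obtain a surjection of $G$-equivariant vector bundles on $G/B$,
\begin{equation*}
\mathcal{L}_{G/B}((\mathbb{V}\oplus\mathfrak{s})^+) \twoheadrightarrow \mathcal{L}_{G/B}(\mathbb{C}_{\widetilde{\alpha}}) = \mathcal{O}_{G/B}(-\widetilde{\alpha}),
\end{equation*}
with kernel $\mathcal{L}_{G/B}((\mathbb{V}\oplus\mathfrak{s})^+_\alpha)$, where the identification uses the convention $\mathcal{O}_{G/B}(\lambda) = \mathcal{L}_{G/B}(\mathbb{C}_\lambda^*)$.

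Now $\widetilde{\mathfrak{N}}$ is by definition the total space of $\mathcal{L}_{G/B}((\mathbb{V}\oplus\mathfrak{s})^+)$, so it carries a tautological section $\tau$ of $p^*\mathcal{L}_{G/B}((\mathbb{V}\oplus\mathfrak{s})^+)$ whose value at a point $[g,v]\in\widetilde{\mathfrak{N}}$ is $v$ itself. Composing $\tau$ with the pullback of the projection above produces a canonical global section $s\in H^0(\widetilde{\mathfrak{N}}, \mathcal{O}_{\widetilde{\mathfrak{N}}}(-\widetilde{\alpha}))$. By construction, $s$ vanishes at $[g,v]$ precisely when the $\widetilde{\alpha}$-component of $v$ is zero, i.e.\ exactly on $\widetilde{\mathfrak{N}}_\alpha$. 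Because the $\widetilde{\alpha}$-weight space is one-dimensional, $s$ restricts to a nonzero linear function on each fibre of $p$, so it is transverse to the zero section. It follows that $\widetilde{\mathfrak{N}}_\alpha$ is the zero scheme of $s$ as a Cartier divisor, and hence $\mathcal{O}(\widetilde{\mathfrak{N}}_\alpha) \simeq \mathcal{O}_{\widetilde{\mathfrak{N}}}(-\widetilde{\alpha})$, as required.
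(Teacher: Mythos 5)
Your proof is correct and is essentially the paper's own argument: both exhibit $\widetilde{\mathfrak{N}}_{\alpha}$ as the zero locus of the image of the tautological section of $p^*\mathcal{L}_{G/B}((\mathbb{V}\oplus\mathfrak{s})^+)$ under the bundle map induced by projection onto the $\widetilde{\alpha}$-weight line. The only divergence is the final multiplicity-one step, where you observe that the section is fibrewise a nonzero linear functional and hence transverse to the zero section, while the paper instead deduces $d=1$ from $\mathrm{Pic}(\widetilde{\mathfrak{N}})=\Lambda$ together with the primitivity of $\widetilde{\alpha}$; your version is the more direct of the two.
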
 \begin{proof} By definition, $\widetilde{\mathfrak{N}}$ is the total space of the vector bundle $\mathcal{L}_{G/B}((\mathbb{V} \oplus \mathfrak{s})^+)$. Consider the pull-back line bundle $\mathcal{E}=p^* \mathcal{L}_{G/B}((\mathbb{V} \oplus \mathfrak{s})^+)$ on $\widetilde{\mathfrak{N}}$; denote by $\tau$ the canonical section of $\Gamma(\widetilde{\mathfrak{N}},\mathcal{E})$. The surjection of $B$-modules $(\mathbb{V} \oplus \mathfrak{s})^+ \rightarrow (\mathbb{V} \oplus \mathfrak{s})^+/(\mathbb{V} \oplus \mathfrak{s})_{\alpha}^+ \simeq \mathbb{C}_{\widetilde{\alpha}}$ gives a map $\mathcal{L}_{G/B}(\mathbb{V} \oplus \mathfrak{s})^+ \rightarrow \mathcal{O}_{G/B}(-\tilde{\alpha})$, and hence a map $\phi_{\alpha}: \mathcal{E} \twoheadrightarrow \mathcal{O}_{\widetilde{\mathfrak{N}}}(- \tilde{\alpha})$. Let $\tau_{\alpha} \in \Gamma(\widetilde{\mathfrak{N}},\mathcal{O}_{\widetilde{\mathfrak{N}}}(- \tilde{\alpha}))$ denote the image of $\tau$. Then by construction $\widetilde{\mathfrak{N}}_{\alpha}$ is the zero set of the section $\tau_{\alpha}$. It follows that $\mathcal{O}_{\widetilde{\mathfrak{N}}}(- \tilde{\alpha})=\mathcal{O}_{\widetilde{\mathfrak{N}}}(d \widetilde{\mathfrak{N}}_{\alpha})$ for some $d \in \mathbb{Z}^+$; but since $\text{Pic}(\widetilde{\mathfrak{N}})=\Lambda$ we find $d=1$, as required. 
\end{proof}

\begin{proposition} \label{hom} \begin{enumerate} \item For $\lambda \in \Lambda^+$, $\text{Hom}(\nabla_{\lambda}, \nabla_{\lambda})=\mathbb{C}$ and $\text{Hom}^{<0}(\nabla_{\lambda}, \nabla_{\lambda}) = 0$. \item For $\lambda \in \Lambda^+$, we have $\text{Hom}^{\bullet}(\nabla_{\mu}, \nabla_{\lambda}) = 0$ if $\lambda \notin \widetilde{conv}(\mu)$.\item For $\lambda, \mu \in \Lambda^+$, $\lambda \neq \mu$, we have $\text{Hom}^{\bullet}(\nabla_{w_0 \cdot \mu}, \nabla_{\lambda})=0$. \end{enumerate} \end{proposition}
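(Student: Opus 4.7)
I prove the three parts in order, using adjunction, Grothendieck--Serre duality (Lemma \ref{duality}), and the cohomology results of Section 2 (Theorem \ref{vanishing}, Theorem \ref{globalsections}, Proposition \ref{vanishing2}) to reduce every Hom computation to a cohomology statement on $\widetilde{\mathfrak{N}}$.

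\textbf{Part (1).} Since $\lambda \in \Lambda^+$ and $\mathfrak{N}$ is affine, Theorem \ref{vanishing} yields $R^i\pi_*\mathcal{O}_{\widetilde{\mathfrak{N}}}(\lambda)=0$ for $i>0$, so $\nabla_\lambda = \pi_*\mathcal{O}_{\widetilde{\mathfrak{N}}}(\lambda)[d]$ is (up to shift) a single coherent sheaf. Hence $\text{Hom}^{<0}(\nabla_\lambda,\nabla_\lambda)=0$ is immediate. For $\text{End}(\nabla_\lambda)=\mathbb{C}$, I use that $\pi$ is birational: $\pi_*\mathcal{O}_{\widetilde{\mathfrak{N}}}(\lambda)$ is torsion-free of generic rank one on the irreducible variety $\mathfrak{N}$, so restriction to the open dense $G$-orbit $\mathcal{U}\subset\mathfrak{N}$ is injective on $G$-equivariant endomorphism algebras, and $G$-equivariant endomorphisms of a line bundle on the transitive $G$-set $\mathcal{U}$ are scalars.

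\textbf{Part (2).} First reduce to $\mu\in\Lambda^+$. Writing $\mu = w\cdot\mu^+$ with $\mu^+$ dominant, Proposition \ref{mod} gives $\nabla_\mu \cong \nabla_{\mu^+}\ (\text{mod }\mathfrak{D}_{\widetilde{conv}^0(\mu^+)})$, and since $\widetilde{conv}(\mu)=\widetilde{conv}(\mu^+)$ the hypothesis $\lambda\notin\widetilde{conv}(\mu)$ persists. As $\widetilde{conv}(\nu)\subseteq\widetilde{conv}(\mu^+)$ for every $\nu\in\widetilde{conv}^0(\mu^+)$ (the $W$-orbit of $\nu+\theta$ sits inside $\text{conv}(W(\mu^++\theta))$), an induction on the poset $\widetilde{conv}(\mu^+)$ disposes of the error terms. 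With both $\mu,\lambda$ dominant, $\text{Hom}^{<0}=0$ trivially; moreover, on the dominant chamber the conditions $\lambda\in\widetilde{conv}(\mu)$ and $\lambda\in\text{conv}(\mu)$ coincide, since both reduce to $\mu-\lambda$ being a non-negative integer combination of positive roots. For $\text{Hom}^{\geq 0}$, adjunction gives
\[ \text{Hom}^i(\nabla_\mu, \nabla_\lambda) = \text{Hom}^i_{\widetilde{\mathfrak{N}}}(L\pi^*\pi_*\mathcal{O}_{\widetilde{\mathfrak{N}}}(\mu), \mathcal{O}_{\widetilde{\mathfrak{N}}}(\lambda)), \]
and the $G$-isotypic decomposition of $\pi_*\mathcal{O}_{\widetilde{\mathfrak{N}}}(\mu) = H^0(\widetilde{\mathfrak{N}},\mathcal{O}_{\widetilde{\mathfrak{N}}}(\mu))$ from Theorem \ref{globalsections}, combined with Proposition \ref{vanishing2}, forces the contributing summands to vanish whenever $\lambda\notin\text{conv}(\mu)$.

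\textbf{Part (3).} Use Grothendieck--Serre duality. In type $C_n$, $w_0 = -\text{id}$, so $w_0\cdot\mu = -\mu - 2\theta$ and Lemma \ref{duality} identifies $\nabla_{w_0\cdot\mu} = \mathcal{S}(\nabla_\mu)$. The anti-equivalence property of $\mathcal{S}$ yields
\[ \text{Hom}^{\bullet}(\nabla_{w_0\cdot\mu},\nabla_\lambda) = \text{Hom}^{\bullet}(\mathcal{S}(\nabla_\lambda),\nabla_\mu) = \text{Hom}^{\bullet}(\nabla_{w_0\cdot\lambda},\nabla_\mu), \]
so the statement is symmetric in $(\lambda,\mu)$. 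Applying Part (2) to either side gives vanishing whenever $\lambda\notin\widetilde{conv}(\mu)$ or $\mu\notin\widetilde{conv}(\lambda)$. For $\lambda\neq\mu$ both dominant, at least one inclusion fails: otherwise $\|\lambda+\theta\|\leq\|\mu+\theta\|\leq\|\lambda+\theta\|$, forcing $\lambda+\theta$ to be an extremal vertex $w(\mu+\theta)$ of the polytope $\text{conv}(W(\mu+\theta))$; since both $\lambda+\theta$ and $\mu+\theta$ lie in the closed dominant chamber (a strict fundamental domain for $W$), this gives $\lambda+\theta = \mu+\theta$, contradicting $\lambda\neq\mu$.

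\textbf{Main obstacle.} The technical heart is the multiplicity analysis in Part (2): carefully translating $\text{Hom}^i(\nabla_\mu,\nabla_\lambda)$ via adjunction into expressions controlled by the $G$-isotypic structure of $H^0(\widetilde{\mathfrak{N}},\mathcal{O}_{\widetilde{\mathfrak{N}}}(\mu))$, and matching the $\theta$-shift intrinsic to the twisted Weyl action (which arises from $\omega_{\widetilde{\mathfrak{N}}} = \mathcal{O}_{\widetilde{\mathfrak{N}}}(-2\theta)$ being non-trivial) against the ordinary convex hull appearing in Proposition \ref{vanishing2}. Parts (1) and (3) are then essentially formal consequences of Part (2) via the structural features (birationality of $\pi$ and the duality $\mathcal{S}$) already established.
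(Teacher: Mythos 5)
Parts (1) and (3) of your proposal are essentially the paper's own arguments and are fine: (1) uses Theorem \ref{vanishing} plus restriction to the open orbit, and (3) uses Grothendieck--Serre duality together with the observation that for distinct dominant $\lambda,\mu$ one cannot have both $\lambda\in\widetilde{conv}(\mu)$ and $\mu\in\widetilde{conv}(\lambda)$ (your norm argument is a correct way to see this).

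Part (2), however, has a genuine gap at its core. The adjunction
$\text{Hom}^{\bullet}(\nabla_{\mu},\nabla_{\lambda})=\text{Hom}^{\bullet}(L\pi^{*}\pi_{*}\mathcal{O}_{\widetilde{\mathfrak{N}}}(\mu),\mathcal{O}_{\widetilde{\mathfrak{N}}}(\lambda))$
is formally correct, but the next step --- that the $G$-isotypic decomposition of $H^{0}(\widetilde{\mathfrak{N}},\mathcal{O}_{\widetilde{\mathfrak{N}}}(\mu))$ ``forces the contributing summands to vanish'' --- does not work. First, the isotypic decomposition of the $\mathbb{C}[\mathfrak{N}]$-module $\pi_{*}\mathcal{O}_{\widetilde{\mathfrak{N}}}(\mu)$ is a decomposition of $G$-modules, not of sheaves on $\mathfrak{N}$; since $\mathfrak{N}$ is singular and this module is not free, computing $L\pi^{*}$ and the resulting $\text{Hom}^{i}$ requires an equivariant free resolution, and the $G$-module structure alone does not control the answer. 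Second, even at the level of generators the inequality points the wrong way: by Proposition \ref{vanishing2}, the irreducibles $V_{\nu}$ occurring in $H^{0}(\widetilde{\mathfrak{N}},\mathcal{O}_{\widetilde{\mathfrak{N}}}(\mu))$ are those with $\mu\in\text{conv}(\nu)$, i.e. $\nu$ \emph{above} $\mu$, and for such $\nu$ the hypothesis $\lambda\notin\text{conv}(\mu)$ gives no vanishing of $\text{Hom}_{G}(V_{\nu},H^{0}(\widetilde{\mathfrak{N}},\mathcal{O}_{\widetilde{\mathfrak{N}}}(\lambda)))$, since $\text{conv}(\nu)\supseteq\text{conv}(\mu)$ may well contain $\lambda$.

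The paper's proof goes in the opposite direction: it replaces $\nabla_{\mu}$ by the free object $V_{\mu}\otimes\mathcal{O}_{\mathfrak{N}}[d]=\pi_{*}p^{*}(V_{\mu}\otimes\mathcal{O}_{G/B})[d]$, which by the weight filtration of $V_{\mu}\otimes\mathcal{O}_{G/B}$ lies in $[\nabla_{\nu_{1}}^{m_{\mu}^{\nu_{1}}}]*\cdots*[\nabla_{\nu_{k}}^{m_{\mu}^{\nu_{k}}}]$ with all $\nu_{i}\in\text{conv}(\mu)$ (going \emph{down} from $\mu$). Then $\text{Hom}^{\bullet}(V_{\mu}\otimes\mathcal{O}_{\mathfrak{N}}[d],\nabla_{\lambda})=\text{Hom}_{G}(V_{\mu},R\Gamma(\nabla_{\lambda}[-d]))=0$ by Theorem \ref{vanishing} and Proposition \ref{vanishing2}, and the induction hypothesis (together with Proposition \ref{mod} to identify $\nabla_{w\cdot\mu}$ with $\nabla_{\mu}$ modulo $\mathfrak{D}_{\widetilde{conv}^{0}(\mu)}$) reduces the convolution to copies of $\text{Hom}^{\bullet}(\nabla_{\mu},\nabla_{\lambda})$ alone. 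One then still needs Lemma 5 of \cite{bezrukavnikov} to conclude that a convolution of copies of a single complex being zero forces that complex to be zero; this extraction step is entirely absent from your sketch and is not automatic. So to repair Part (2) you should adopt the $V_{\mu}\otimes\mathcal{O}_{\mathfrak{N}}$ device and the convolution-cancellation lemma rather than the isotypic decomposition of $\pi_{*}\mathcal{O}_{\widetilde{\mathfrak{N}}}(\mu)$.
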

\begin{proof} (1) By Theorem \ref{vanishing}, $\nabla_{\lambda}$ is concentrated in a single degree, so $\text{Hom}^{<0}(\nabla_{\lambda}, \nabla_{\lambda})=0$. Since $\mathfrak{N}$ has an open, smooth $G$-orbit, and map $\phi: \nabla_{\lambda} \rightarrow \nabla_{\lambda}$ is determined by its restriction to the open orbit, where it must be a scalar, it follows that $\text{Hom}(\nabla_{\lambda}, \nabla_{\lambda}) = \mathbb{C}$.

(2) Fix $\lambda$. Assuming that for all $\mu' \in \Lambda^+ \cap \widetilde{conv}^0 (\mu)$, $\text{Hom}^{\bullet}(\nabla_{\mu'}, \nabla_{\lambda}) = 0$; it suffices to prove that $\text{Hom}^{\bullet}(\nabla_{\mu}, \nabla_{\lambda}) = 0$. First assume $\mu \in \Lambda^+$ also.

Noting that $V_{\mu} \otimes \mathcal{O}_{G/B} \in Coh^G(G/B)$ has a filtration where the subquotient $\mathcal{O}_{G/B}(\lambda)$ occurs with multiplicity $m_{\mu}^{\lambda}$, if the weights of $V_{\mu}$ are $\nu_1, \cdots , \nu_k$ it follows that: 
\begin{align*} V_{\mu} \otimes \mathcal{O}_{\mathfrak{N}}[d] = \pi_* p^* (V_{\mu} \otimes \mathcal{O}_{G/B}) [d] \in  [\nabla_{\nu_1}^{m_{\mu}^{\nu_1}}] * [\nabla_{\nu_2}^{m_{\mu}^{\nu_2}}] * \cdots * [\nabla_{\nu_k}^{m_{\mu}^{\nu_k}}] 
\end{align*}
Given $\mathcal{A}, \mathcal{B} \in D^b(Coh^G(\mathfrak{N}))$, we view $\text{Hom}^{\bullet}(\mathcal{A}, \mathcal{B})$ as an element of $D(\text{Vect}_{\mathbb{C}})$. It follows now that:
\begin{align*} \text{Hom}^{\bullet}(V_{\mu} \otimes \mathcal{O}_{\mathfrak{N}}[d], \nabla_{\lambda}) \in [\text{Hom}^{\bullet}(\nabla_{\nu_1}^{m_{\mu}^{\nu_1}}, \nabla_{\lambda})] * [\text{Hom}^{\bullet}(\nabla_{\nu_2}^{m_{\mu}^{\nu_2}}, \nabla_{\lambda})] * \cdots * [\text{Hom}^{\bullet}(\nabla_{\nu_k}^{m_{\mu}^{\nu_k}}, \nabla_{\lambda})] \end{align*}
All $\nu_{i} \in \text{conv}(\mu)$; by the induction hypothesis, if $\nu_{i} \in \text{conv}^0(\mu) \subseteq \widetilde{conv}^0(\mu)$, then $\text{Hom}^{\bullet}(\nabla_{\nu_i}, \nabla_{\lambda})=0$; thus, for $\text{Hom}^{\bullet}(\nabla_{\nu_i}^{m_{\mu}^{\nu_i}}, \nabla_{\lambda}) \neq 0$, we require $\nu_{i} = w \mu$ for some $w \in W$ (note $m_{\mu}^{w \mu} = 1$ in this case). Either $w \mu \in \widetilde{conv}^0(\mu)$, or $w \mu = w \cdot \mu$. In the first case again we have $\text{Hom}^{\bullet}(\nabla_{\nu_i}, \nabla_{\lambda})=0$ by the induction hypothesis. In the second case, by Proposition \ref{mod}, $\nabla_{w \cdot \mu} \simeq \nabla_{\mu} (\text{mod } \mathfrak{D}_{\widetilde{conv}^0 (\mu)})$, and by the induction hypothesis, $\text{Hom}^{\bullet}(\mathcal{A}_{\mu}, \nabla_{\lambda}) = 0$ for any $\mathcal{A}_{\mu} \in \mathfrak{D}_{\widetilde{conv}^0 (\mu)}$, we may conclude $\text{Hom}^{\bullet}(\nabla_{w \cdot \mu}, \nabla_{\lambda})=\text{Hom}^{\bullet}(\nabla_{\mu}, \nabla_{\lambda})$. To summarize, $\text{Hom}^{\bullet}(\nabla_{\nu_i}^{m_{\mu}^{\nu_i}}, \nabla_{\lambda})$ is either $0$ or $\text{Hom}^{\bullet}(\nabla_{\mu}, \nabla_{\lambda})$. So re-writing the above: 
\begin{align*} \text{Hom}^{\bullet}(V_{\mu} \otimes \mathcal{O}_{\mathfrak{N}}[d], \nabla_{\lambda}) \in  \text{Hom}^{\bullet}(\nabla_{\mu}, \nabla_{\lambda}) * \cdots * \text{Hom}^{\bullet}(\nabla_{\mu}, \nabla_{\lambda}) \end{align*} However, using Theorems \ref{vanishing} and Proposition \ref{vanishing2}, note that (since $\lambda \in \widetilde{\text{conv}}(\mu) \rightarrow \lambda \in \text{conv}(\mu) $): \begin{align*} &\text{Hom}^{\bullet}(V_{\mu} \otimes \mathcal{O}_{\mathfrak{N}}[d], \nabla_{\lambda}) = \text{Hom}_G(V_{\mu}, R^{\bullet}\Gamma(\nabla_{\lambda}[-d])) = 0 \\ &\Rightarrow 0 \in \text{Hom}^{\bullet}(\nabla_{\mu}, \nabla_{\lambda}) * \cdots * \text{Hom}^{\bullet}(\nabla_{\mu}, \nabla_{\lambda}) \end{align*} Using Lemma $5$, on page $12$ of \cite{bezrukavnikov}, we now conclude that $\text{Hom}^{\bullet}(\nabla_{\mu}, \nabla_{\lambda}) = 0$. Finally, as stated above, $\text{Hom}^{\bullet}(\nabla_{w \cdot \mu}, \nabla_{\lambda}) = \text{Hom}^{\bullet}(\nabla_{\mu}, \nabla_{\lambda}) = 0$; since any weight $\mu' \in \Lambda$ is of the form $w \cdot \mu$ for some $\mu \in \Lambda^+$, this removes the initial assumption that $\mu \in \Lambda^+$ and completing the proof of the statement. 

(3) If $\lambda \notin \widetilde{conv}(\mu)$, then $\lambda \notin \widetilde{conv}(w_0 \cdot \mu)$, and the result follows from the previous part. So suppose that $\lambda \in \widetilde{conv}(\mu)$, i.e. that $\lambda + \theta \in \text{conv}(\mu+\theta)$; then $\mu \notin \widetilde{conv}(-\lambda - 2\theta)$. Recalling Lemma \ref{duality}, from the previous part we deduce the required statement (noting that $w_0 \cdot \mu = w_0 (\mu + \theta) - \theta = - \mu - 2 \theta$ since $w_0 \lambda = - \lambda$):
\begin{align*} \text{Hom}^{\bullet}(\nabla_{w_0 \cdot \mu}, \nabla_{\lambda}) = \text{Hom}^{\bullet}(\mathcal{S}(\nabla_{\lambda}), \mathcal{S}(\nabla_{w_0 \cdot \mu})) = \text{Hom}^{\bullet}(\nabla_{-\lambda-2\theta}, \nabla_{\mu}) = 0 \end{align*} \end{proof}
\begin{lemma} $D^b(\text{Coh}^G(\widetilde{\mathfrak{N}}))$ is generated as a triangulated category by the objects $\mathcal{O}_{\widetilde{\mathfrak{N}}}(\lambda)$ for $\lambda \in \Lambda$. \end{lemma}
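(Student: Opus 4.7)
The approach rests on passing to the $B$-equivariant picture. Via the standard equivalence
\begin{equation*}
\text{Coh}^G(\widetilde{\mathfrak{N}}) \simeq \text{Coh}^B((\mathbb{V} \oplus \mathfrak{s})^+)
\end{equation*}
coming from $\widetilde{\mathfrak{N}} = G \times_B (\mathbb{V} \oplus \mathfrak{s})^+$, objects of $\text{Coh}^G(\widetilde{\mathfrak{N}})$ correspond to $B$-equivariant finitely-generated modules over the polynomial ring $R := S((\mathbb{V} \oplus \mathfrak{s})^{+*})$, and $\mathcal{O}_{\widetilde{\mathfrak{N}}}(\lambda)$ corresponds to the induced module $R \otimes \mathbb{C}_\lambda^*$ (with the diagonal $B$-action). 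My plan is to reduce the lemma to (a) constructing, for each such $M$, a finite $B$-equivariant resolution by ``induced'' modules of the form $R \otimes V_i$ with $V_i$ a finite-dimensional $B$-representation, and (b) filtering each $R \otimes V_i$ so that the subquotients are individual $R \otimes \mathbb{C}_\lambda^*$.

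For step (a), I would choose a finite-dimensional $B$-stable subspace $V_0 \subseteq M$ generating $M$ as an $R$-module (which exists since the $B$-action on $M$ is rational), form the surjection $R \otimes V_0 \twoheadrightarrow M$, and iterate on its kernel (itself $B$-equivariant and $R$-finitely generated since $R$ is Noetherian). Termination in at most $d_0 := \dim (\mathbb{V} \oplus \mathfrak{s})^+$ steps follows from Hilbert's syzygy theorem, which guarantees that the $d_0$-th syzygy $K_{d_0-1}$ is $R$-projective, hence $R$-free. To conclude that this $R$-free syzygy has the $B$-equivariant form $R \otimes V_{d_0}$, one uses that the linear $B$-action on $(\mathbb{V} \oplus \mathfrak{s})^+$ commutes with the scaling $\mathbb{G}_m$-action, so $(\mathbb{V} \oplus \mathfrak{s})^+$ is $B$-equivariantly contractible to the origin, and consequently every $B$-equivariant vector bundle on it is trivial, i.e. of the form $R \otimes V$ for a $B$-representation $V$. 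For step (b), since $B = T \ltimes U$ is solvable, each $V_i$ admits a composition series over $B$ with one-dimensional subquotients $\mathbb{C}_\lambda^*$, and tensoring with $R$ yields a finite $B$-equivariant filtration of $R \otimes V_i$ whose subquotients are the $R \otimes \mathbb{C}_\lambda^*$. Concatenating these filtrations with the resolution from (a) places $M$ in the triangulated envelope of $\{R \otimes \mathbb{C}_\lambda^* : \lambda \in \Lambda\}$, which translates back under the equivalence to the triangulated envelope of the $\mathcal{O}_{\widetilde{\mathfrak{N}}}(\lambda)$ inside $D^b(\text{Coh}^G(\widetilde{\mathfrak{N}}))$.

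The hard part will be the identification of $R$-free $B$-equivariant modules with induced modules $R \otimes V$: this is the nontrivial ingredient allowing truncation of the resolution at length $d_0$, and it rests on the (standard but not purely formal) fact that equivariant vector bundles on a linear representation of an algebraic group are trivial, invoked via the equivariant retraction to the origin. Once this is in hand, the remaining steps are essentially formal manipulations with composition series and syzygies.
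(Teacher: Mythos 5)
Your reduction to the $B$-equivariant affine picture is sound and is, in substance, the same reduction the paper makes: the paper passes through $\text{Coh}^G(G/B)\simeq\text{Rep}(B)$, uses the Lie--Kolchin filtration of a finite-dimensional $B$-module by characters exactly as in your step (b), and then outsources the vector-bundle step to Lemma 6 of \cite{bezrukavnikov} (cf.\ Chriss--Ginzburg, p.~266). The difference is that you attempt to prove that vector-bundle step directly, and the attempt has a genuine gap at precisely the point you flag as the hard part: the claim that a $B$-equivariant module which is free over $R=S((\mathbb{V}\oplus\mathfrak{s})^{+*})$ must be of the induced form $R\otimes V$, equivalently that every $B$-equivariant algebraic vector bundle over the $B$-module $(\mathbb{V}\oplus\mathfrak{s})^+$ is equivariantly trivial. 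This is the algebraic \emph{equivariant Serre problem}, and ``$B$-equivariant contractibility to the origin'' is not a proof of it. Even non-equivariantly, triviality of vector bundles on affine space is Quillen--Suslin, not a consequence of contractibility; equivariantly, the analogous statement is \emph{false} in general: there exist non-trivial equivariant algebraic vector bundles over representations of suitable non-abelian reductive groups (Schwarz, Knop, Masuda--Petrie), while the known positive results give triviality only for abelian reductive groups and stable triviality for reductive groups (Bass--Haboush). For a Borel subgroup --- solvable and non-reductive, so that surjections of $B$-modules need not split and the obvious attempt to lift $K\twoheadrightarrow K/R_{+}K$ equivariantly fails --- I know of no theorem supplying what you need; and since your sheaves are $G$- but not $\mathbb{G}_m$-equivariant, the module $M$ and its syzygies are not graded, so the graded Nakayama/minimal-resolution argument is unavailable as a substitute.

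Two smaller remarks in the same direction. First, even non-equivariantly Hilbert's syzygy theorem only gives that the $d_0$-th syzygy is projective; freeness of a non-graded finitely generated projective over a polynomial ring is again Quillen--Suslin, so you are already leaning on non-formal input before the equivariant issue arises. Second, what you actually need is weaker than equivariant triviality --- it suffices that the terminal syzygy lie in the triangulated subcategory generated by the $R\otimes\mathbb{C}_{\lambda}^{*}$ --- but your argument does not establish even that weaker statement, and note that triangulated subcategories generated in the sense used in this paper are not closed under direct summands, so Bass--Haboush-type stable triviality would not rescue the argument either. The clean fix is the one the paper adopts: invoke the general statement that for a $G$-equivariant vector bundle $E\to X$ the pullbacks of a generating family for $D^b(\text{Coh}^G(X))$ generate $D^b(\text{Coh}^G(E))$ (Lemma 6 of \cite{bezrukavnikov}), rather than trying to trivialize the terminal syzygy by hand.
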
 \begin{proof} It is well-known that $\text{Coh}^G (G/B) \simeq \text{Rep}(B)$, and is generated as a category by $\mathcal{O}_{G/B}(\lambda)$. Since $p: \widetilde{\mathfrak{N}} \rightarrow G/B$ gives $\widetilde{\mathfrak{N}}$ the structure of a $G$-equivariant vector bundle over $G/B$, using Lemma $6$ in \cite{bezrukavnikov} (see also the last paragraph in page $266$ of \cite{chrissginzburg}), the required result follows. \end{proof}
\begin{lemma} \label{image} The image of the functor $R\pi_*: D^b(Coh^G(\widetilde{\mathfrak{N}})) \rightarrow D^b(Coh^G(\mathfrak{N}))$ generates $D^b(Coh^G(\mathfrak{N}))$ as a triangulated category. \end{lemma}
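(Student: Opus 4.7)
I plan to proceed by induction on the closure poset $\mathcal{Q}_n$ of $G$-orbits in $\mathfrak{N}$, using the subvarieties $\widehat{\mathbb{O}_{\mu,\nu}} \subset \widetilde{\mathfrak{N}}$ of Section $2.2$ as a substitute for the Jacobson--Morozov resolutions employed in Lemma $7$ of \cite{bezrukavnikov}. Let $\mathcal{D} \subset D^b(Coh^G(\mathfrak{N}))$ denote the strictly full triangulated subcategory generated by the image of $R\pi_*$; it suffices to prove that every $G$-equivariant coherent sheaf on $\mathfrak{N}$ lies in $\mathcal{D}$. As a first observation, $R\pi_* \mathcal{O}_{\widetilde{\mathfrak{N}}} \simeq \mathcal{O}_{\mathfrak{N}}$ (from Theorem \ref{vanishing} with $\lambda=0$, the affineness of $\mathfrak{N}$, and the normality of $\mathfrak{N}$), so by the projection formula $V \otimes \mathcal{O}_{\mathfrak{N}} = R\pi_*(V \otimes \mathcal{O}_{\widetilde{\mathfrak{N}}}) \in \mathcal{D}$ for every finite-dimensional $G$-module $V$.

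Fix an orbit $\mathbb{O}_{\mu,\nu}$, and assume as induction hypothesis that every equivariant coherent sheaf on $\mathfrak{N}$ supported on $\bigcup_{(\mu',\nu')<(\mu,\nu)}\overline{\mathbb{O}_{\mu',\nu'}}$ already lies in $\mathcal{D}$. Denoting by $\iota_{\widehat{\mathbb{O}_{\mu,\nu}}}:\widehat{\mathbb{O}_{\mu,\nu}} \hookrightarrow \widetilde{\mathfrak{N}}$ and $\iota_{\overline{\mathbb{O}_{\mu,\nu}}}:\overline{\mathbb{O}_{\mu,\nu}} \hookrightarrow \mathfrak{N}$ the inclusions, the factorization $\pi \circ \iota_{\widehat{\mathbb{O}_{\mu,\nu}}} = \iota_{\overline{\mathbb{O}_{\mu,\nu}}} \circ \theta_{\mu,\nu}$ gives $R\pi_*(\iota_{\widehat{\mathbb{O}_{\mu,\nu}} *}\mathcal{G}) = \iota_{\overline{\mathbb{O}_{\mu,\nu}} *} R\theta_{\mu,\nu *}\mathcal{G} \in \mathcal{D}$ for every equivariant coherent sheaf $\mathcal{G}$ on $\widehat{\mathbb{O}_{\mu,\nu}}$. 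Taking $\mathcal{G}$ to be the restriction of $p^*\mathcal{O}_{G/B}(\lambda)$ for varying $\lambda \in \Lambda$ and computing $R\theta_{\mu,\nu *}$ fibrewise via Borel--Weil--Bott on the flag variety $L/L\cap B'$ supplied by Corollary \ref{acyc}, I expect to realize, on the open stratum $\mathbb{O}_{\mu,\nu}$, the $G$-equivariant vector bundle whose fiber at $(v,x)$ is the irreducible $L$-representation of highest weight $\lambda$ (up to a cohomological shift), restricted to the isotropy group $G^{(v,x)}$.

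To close the inductive step I would then observe that $G^{(v,x)}$ is contained in the Levi $L$ of $P$, since it preserves the unique $(\mu,\nu)$-adapted filtration from Definition \ref{filt}; hence every irreducible $G^{(v,x)}$-representation arises as a constituent of the restriction of some $L$-irreducible, and every irreducible equivariant coherent sheaf on $\mathbb{O}_{\mu,\nu}$ becomes accessible modulo sheaves supported on the boundary. A standard d\'evissage --- filtering an arbitrary equivariant coherent sheaf on $\overline{\mathbb{O}_{\mu,\nu}}$ by its restriction to $\mathbb{O}_{\mu,\nu}$ and to $\partial\overline{\mathbb{O}_{\mu,\nu}}$, and using Noetherian induction on the length of the open-stratum piece --- then places its pushforward to $\mathfrak{N}$ inside $\mathcal{D}$, with the boundary contribution absorbed by the inductive hypothesis.

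The main obstacle will be the second half of the inductive step: globalizing the fibrewise Borel--Weil--Bott calculation to an equivariant identification of $R\theta_{\mu,\nu *}\mathcal{G}|_{\mathbb{O}_{\mu,\nu}}$, and verifying that as $\lambda$ ranges over $\Lambda^+$ the $L$-irreducibles so produced restrict onto all of $\text{Irr}(G^{(v,x)})$. This is essentially a bookkeeping exercise with the $(\mu,\nu)$-filtration of Definition \ref{filt}, but requires care; if some irreducibles remain out of reach, one can further tensor with objects of the form $V \otimes \mathcal{O}_{\mathfrak{N}}$, already known to lie in $\mathcal{D}$, to bridge the gap.
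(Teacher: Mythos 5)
You have correctly identified the two essential ingredients --- induction over the orbit stratification and the subvarieties $\widehat{\mathbb{O}_{\mu,\nu}}$ with their acyclic fibres from Corollary \ref{acyc} --- but the inductive step you propose is both much harder than necessary and contains real gaps precisely at the points you flag as ``the main obstacle.'' First, the claim that $G^{(v,x)}$ is contained in the Levi $L$ does not follow from the fact that it preserves the unique $C$-adapted filtration: that only gives $G^{(v,x)}\subseteq P$, and in general the isotropy group contains unipotent elements not lying in any fixed Levi. To salvage your argument you would need to observe that irreducible rational representations of $G^{(v,x)}$ kill its unipotent radical, that a maximal reductive subgroup of $G^{(v,x)}$ is conjugate in $P$ into $L$, and that every irreducible of a reductive subgroup of $L$ occurs in the restriction of some irreducible $L$-module; all true in characteristic zero, but none of it is proved in your sketch. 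Second, the concluding ``standard d\'evissage'' is not standard as stated: a coherent sheaf on $\overline{\mathbb{O}_{\mu,\nu}}$ is not filtered by ``its restriction to $\mathbb{O}_{\mu,\nu}$ and to the boundary''; lifting the composition series of the restriction $\mathcal{F}|_{\mathbb{O}_{\mu,\nu}}$ (a finite-length object of $\mathrm{Rep}(G^{(v,x)})$) to maps defined over the closure, with cones supported on the boundary, is exactly the nontrivial step, and you have not supplied it.

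The paper's proof sidesteps all of this and never touches the representation theory of the isotropy groups. Given $\mathcal{F}\in Coh^G(\mathfrak{N})$ with $\mathbb{O}_{\mu,\nu}$ open in its support, set $\tilde{\mathcal{F}}=\iota_{\mu,\nu\,*}(\pi\circ\iota_{\mu,\nu})^*\mathcal{F}$, where $\iota_{\mu,\nu}:\widehat{\mathbb{O}_{\mu,\nu}}\hookrightarrow\widetilde{\mathfrak{N}}$ is the inclusion. The unit of adjunction gives a map $\phi:\mathcal{F}\to R\pi_*\tilde{\mathcal{F}}$, and the projection formula together with the acyclicity of the fibres of $\theta_{\mu,\nu}$ over $\mathbb{O}_{\mu,\nu}$ (Corollary \ref{acyc}) shows that $\phi$ is an isomorphism over the open stratum; hence its cone is supported on $\overline{\mathbb{O}_{\mu,\nu}}\setminus\mathbb{O}_{\mu,\nu}$ and the induction (on dimension of support) closes immediately. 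In other words, rather than trying to manufacture every irreducible equivariant bundle on the open orbit from line bundles upstairs, you should pull back the \emph{given} sheaf $\mathcal{F}$ itself to $\widehat{\mathbb{O}_{\mu,\nu}}$ and push it back down; the only geometric input needed is that the fibres over the open orbit have no higher cohomology, which is exactly what Corollary \ref{acyc} provides. I recommend restructuring your argument along these lines; your fibrewise Borel--Weil--Bott analysis, while interesting, is not needed for this lemma.
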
 \begin{proof} Let $\mathfrak{D}$ denote the triangulated subcategory of $D^b(Coh^G(\mathfrak{N}))$ generated by the image of $R\pi_*$. It suffices to show that any $\mathcal{F} \in Coh^G(\mathfrak{N})$ lies in $\mathfrak{D}$. Given this statement, it will then also follow that $\mathcal{F}[i] \in \mathfrak{D}$; since $\{ \mathcal{F}[i] | \mathcal{F} \in Coh^G(\mathfrak{N}), i \in \mathbb{Z}\}$ generates $D^b(Coh^G(\mathfrak{N}))$, it would then follow that $\mathfrak{D} = D^b(Coh^G(\mathfrak{N}))$. 

We proceed by induction on the dimension of the support to show that an arbitrary $\mathcal{F} \in D^b(Coh^G(\mathfrak{N}))$ lies in $\mathfrak{D}$. It suffices to construct an $\tilde{\mathcal{F}} \in D^b(Coh^G(\widetilde{\mathfrak{N}}))$, and a morphism $\phi: \mathcal{F} \rightarrow R\pi_* \tilde{\mathcal{F}}$ such that the cone $\mathcal{G}$ of $\phi$ has smaller support than $\mathcal{F}$ (since $\mathcal{G} \in \mathfrak{D}$ by induction, and $R\pi_* \tilde{\mathcal{F}} \in \mathfrak{D}$, it would follow that $\mathcal{F} \in \mathfrak{D}$). Suppose that $\mathbb{O}_{\mu, \nu}$ is open in the support of $\mathcal{F}$, for some $(\mu, \nu) \in \mathcal{Q}_n$. Denote the inclusion $\iota_{\mu, \nu}: \widehat{\mathbb{O}_{\mu, \nu}} \hookrightarrow \widetilde{\mathfrak{N}}$. Let $\mathcal{F}' = (\pi \circ \iota_{\mu, \nu})^* \mathcal{F} \in Coh^G(\widehat{\mathbb{O}_{\mu, \nu}})$. We claim that $\tilde{\mathcal{F}}=\iota_{\mu,\nu *}\mathcal{F}'$ has the required property. First construct the map $\phi$ as the composition of the map $\mathcal{F} \rightarrow (\pi \circ \iota_{\mu,\nu})_* \mathcal{F}'$ (coming the adjointness of $(\pi \circ \iota_{\mu,\nu})^*$ and $(\pi \circ \iota_{\mu,\nu})_*$), with the map $(\pi \circ \iota_{\mu,\nu})_* \mathcal{F}' \rightarrow R\pi_*(\iota_{\mu,\nu *}\mathcal{F}')=R\pi_*\tilde{\mathcal{F}}$. Since the fibres of the map $\pi \circ \iota_{\mu, \nu}: \widehat{\mathbb{O}_{\mu, \nu}} \rightarrow \mathfrak{N}$ over the orbit $\mathbb{O}_{\mu, \nu}$ are acyclic (see Corollary \ref{acyc}), the projection formula gives that $\phi$ is an isomorphism when restricted to $\mathbb{O}_{\mu,\nu}$. Thus the cone of $\phi$ is supported on $\overline{\mathbb{O}_{\mu,\nu}} \setminus \mathbb{O}_{\mu,\nu}$, and has smaller dimension. \end{proof}
\begin{proposition} \label{generates} The category $D^b(\text{Coh}^G({\mathfrak{N}}))$ is generated as a triangulated category by the objects $\{\nabla_{\lambda} | \lambda \in \Lambda^+ \}$. \end{proposition}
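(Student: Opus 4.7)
The plan is to combine the two preceding lemmas with Proposition \ref{mod} to transfer generation from the full weight lattice down to $\Lambda^+$, via a two-step argument.

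\textbf{Step 1 (all weights suffice).} First I would combine the preceding lemma, which asserts that $\{\mathcal{O}_{\widetilde{\mathfrak{N}}}(\mu) : \mu \in \Lambda\}$ generates $D^b(\text{Coh}^G(\widetilde{\mathfrak{N}}))$ as a triangulated category, with Lemma \ref{image}, which asserts that the essential image of the triangulated functor $R\pi_*$ generates $D^b(\text{Coh}^G(\mathfrak{N}))$. Since $R\pi_*$ carries a generating family to a generating family of its image, the collection $\{R\pi_* \mathcal{O}_{\widetilde{\mathfrak{N}}}(\mu) : \mu \in \Lambda\}$ generates the target. Up to the fixed shift $[-d]$ (which any triangulated subcategory absorbs), this is exactly $\{\nabla_\mu : \mu \in \Lambda\}$, so $\mathfrak{D}_\Lambda = D^b(\text{Coh}^G(\mathfrak{N}))$. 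It remains to promote this to $\mathfrak{D}_{\Lambda^+} = D^b(\text{Coh}^G(\mathfrak{N}))$.

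\textbf{Step 2 (reduction to $\Lambda^+$).} I would prove $\nabla_\mu \in \mathfrak{D}_{\Lambda^+}$ for every $\mu \in \Lambda$ by Noetherian induction on $\Lambda^+$ equipped with the partial order $\lambda' \prec \lambda$ iff $\lambda' \in \widetilde{conv}(\lambda) \setminus \{\lambda\}$, which is well-founded since each $\widetilde{conv}(\lambda) \cap \Lambda$ is finite. Given $\mu \in \Lambda$, I would write $\mu = w \cdot \lambda$ for some $w \in W$ and $\lambda \in \Lambda^+$ (this is possible because $\theta$ has nonnegative pairing with all simple coroots, so on integral weights the shifted dominant chamber coincides with $\Lambda^+$). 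By Proposition \ref{mod}, $\nabla_\mu \simeq \nabla_\lambda \pmod{\mathfrak{D}_{\widetilde{conv}^0(\lambda)}}$, so it suffices to show $\mathfrak{D}_{\widetilde{conv}^0(\lambda)} \subseteq \mathfrak{D}_{\Lambda^+}$. For $\mu' \in \widetilde{conv}^0(\lambda)$, write $\mu' = w' \cdot \lambda'$ with $\lambda' \in \Lambda^+$: by $W$-invariance of $\text{conv}(W(\lambda+\theta))$, the full $W$-orbit of $\mu'+\theta$ lies in this convex hull, hence $\lambda' + \theta$ does too, i.e.\ $\lambda' \in \widetilde{conv}(\lambda)$; and $\lambda' = \lambda$ would force $\mu' \in W \cdot \lambda$, contradicting $\mu' \in \widetilde{conv}^0(\lambda)$. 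Thus $\lambda' \prec \lambda$ strictly, and by the inductive hypothesis $\nabla_{\mu'} \in \mathfrak{D}_{\Lambda^+}$. Since $\nabla_\lambda$ is trivially in $\mathfrak{D}_{\Lambda^+}$, the equivalence modulo $\mathfrak{D}_{\widetilde{conv}^0(\lambda)} \subseteq \mathfrak{D}_{\Lambda^+}$ gives $\nabla_\mu \in \mathfrak{D}_{\Lambda^+}$, completing the induction.

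The only non-formal ingredient is the convex-geometric check that passes from a non-orbit point of $\widetilde{conv}(\lambda)$ to a strictly $\prec$-smaller dominant weight, which is what makes the induction go through; the rest is assembly of machinery already built in the excerpt, chiefly Proposition \ref{mod}, Lemma \ref{image}, and the preceding generation lemma on $\widetilde{\mathfrak{N}}$.
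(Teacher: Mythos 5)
Your proposal is correct and follows essentially the same route as the paper: first deduce generation by $\{\nabla_{\mu} \mid \mu \in \Lambda\}$ from the two preceding lemmas, then use Proposition \ref{mod} and induction over $\widetilde{conv}$ to reduce to dominant weights. Your Step 2 merely rephrases the paper's induction (the paper inducts on the statement $\mathfrak{D}_{\widetilde{conv}(\lambda)\cap\Lambda^+}=\mathfrak{D}_{\widetilde{conv}(\lambda)}$, you induct on $\nabla_{w\cdot\lambda}\in\mathfrak{D}_{\Lambda^+}$), while making the well-foundedness and the fact that every $\mu\in\Lambda$ is $W$-twisted-conjugate to a dominant weight more explicit.
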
 \begin{proof} By applying the previous $2$ lemmas, we find that (after a shift), $D^b(\text{Coh}^G(\mathfrak{N}))$ is generated by the objects $\{\nabla_{\lambda} | \lambda \in \Lambda \}$. The result will follow once we establish that $\mathfrak{D}_{\widetilde{conv}(\lambda) \cap \Lambda^+} = \mathfrak{D}_{\widetilde{conv}(\lambda)}$ for each $\lambda \in \Lambda^+$. Suppose that this is true for all $\lambda' \in \Lambda^+ \cap \widetilde{conv}^0 (\lambda)$, then it follows that $\mathfrak{D}_{\widetilde{conv}^0(\lambda) \cap \Lambda^+}=\mathfrak{D}_{\widetilde{conv}^0(\lambda)}$. But since $\nabla_{w \cdot \lambda} \simeq \nabla_{\lambda} (\text{mod } \mathfrak{D}_{\widetilde{conv}^0(\lambda) \cap \Lambda^+})$ by Proposition \ref{mod}, it follows that $\nabla_{w \cdot \lambda} \in \mathfrak{D}_{\widetilde{conv}(\lambda) \cap \Lambda^+}$; the result now follows. \end{proof}
\begin{definition} Denote $\triangle_{\lambda} = \nabla_{w_0 \cdot \lambda}$. \end{definition}
\begin{theorem} Fix any total order $\preceq$ on $\Lambda^+$, such that $\lambda \in \widetilde{conv}(\mu) \Rightarrow \lambda \preceq \mu$. Then the sets $\nabla = \{ \nabla_{\lambda} | \lambda \in \Lambda^+ \}$, and $\triangle = \{ \nabla_{w_0 \cdot \lambda} | \lambda \in \Lambda^+ \}$, respectively, are a quasi-exceptional set generating $D^b(\text{Coh}^G({\mathfrak{N}}))$, and a dual quasi-exceptional set. \end{theorem}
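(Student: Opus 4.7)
The plan is to verify the four axioms of a dualizable quasi-exceptional set, together with the generating condition; each assertion is already supplied by a proposition proved earlier in the paper, so the proof is essentially a matching exercise.

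First I would handle the three Hom-vanishing axioms. The self-Hom axiom ($\text{Hom}^{<0}(\nabla_\lambda, \nabla_\lambda)=0$ and $\text{End}(\nabla_\lambda)=\mathbb{C}$ for $\lambda \in \Lambda^+$) is exactly Proposition \ref{hom}(1). For the first axiom, suppose $\lambda \prec \mu$ in $\Lambda^+$; by the defining property of the chosen order, $\mu$ cannot lie in $\widetilde{conv}(\lambda)$, since otherwise $\mu \preceq \lambda$. Proposition \ref{hom}(2) then gives $\text{Hom}^\bullet(\nabla_\lambda, \nabla_\mu)=0$. For the third axiom, whenever $\mu \succ \lambda$ in $\Lambda^+$ we have $\mu \neq \lambda$, and Proposition \ref{hom}(3) immediately produces $\text{Hom}^\bullet(\nabla_{w_0 \cdot \mu}, \nabla_\lambda)=0$, which is in fact stronger than the axiom requires.

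The main step is the fourth axiom: $\triangle_\lambda = \nabla_{w_0 \cdot \lambda} \simeq \nabla_\lambda$ in the Verdier quotient $\mathcal{C}/\mathcal{C}_{<\lambda}$. Proposition \ref{mod} applied with $w = w_0$ gives $\nabla_{w_0 \cdot \lambda} \simeq \nabla_\lambda$ modulo $\mathfrak{D}_{\widetilde{conv}^0(\lambda)}$, so what must be checked is the inclusion $\mathfrak{D}_{\widetilde{conv}^0(\lambda)} \subseteq \mathcal{C}_{<\lambda}$, after which the isomorphism descends. For this I would invoke the argument of Proposition \ref{generates}, which in fact shows $\mathfrak{D}_{\widetilde{conv}^0(\lambda)} = \mathfrak{D}_{\widetilde{conv}^0(\lambda) \cap \Lambda^+}$. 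Every $\mu \in \widetilde{conv}^0(\lambda) \cap \Lambda^+$ satisfies $\mu \in \widetilde{conv}(\lambda)$ and $\mu \neq \lambda$ (because $\widetilde{conv}^0(\lambda)$ explicitly excludes the orbit $\{w \cdot \lambda\}$, in particular $\lambda$ itself), so $\mu \prec \lambda$ strictly by the order hypothesis, whence $\nabla_\mu \in \mathcal{C}_{<\lambda}$. This yields the desired inclusion; the generating property for $\nabla$ is Proposition \ref{generates} itself.

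The main obstacle is not any single computation but rather the bookkeeping around the fourth axiom: one must carefully distinguish the ``convex hull'' subcategory $\mathfrak{D}_{\widetilde{conv}^0(\lambda)}$ indexed by all of $\Lambda$, which is what appears in Proposition \ref{mod}, from the ``ideal'' $\mathcal{C}_{<\lambda}$ indexed only by $\Lambda^+$, which is what appears in the axiom. Proposition \ref{generates} is the bridge between the two descriptions, and combined with the strict version of the order hypothesis (guaranteed by the explicit removal of the $W$-orbit $\{w \cdot \lambda\}$ from $\widetilde{conv}^0(\lambda)$) it makes the descent of the isomorphism through the Verdier quotient valid.
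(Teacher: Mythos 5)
Your proof is correct and follows essentially the same route as the paper: all three Hom-axioms come from Proposition \ref{hom}, the Verdier-quotient axiom from Proposition \ref{mod}, and generation from Proposition \ref{generates}. Your treatment of the fourth axiom is in fact slightly more careful than the paper's, since you explicitly bridge the gap between $\mathfrak{D}_{\widetilde{conv}^0(\lambda)}$ (indexed over all of $\Lambda$) and $\mathcal{C}_{<\lambda}$ (indexed over $\Lambda^+$) via the argument of Proposition \ref{generates}, a point the paper leaves implicit.
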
 \begin{proof} By definition (see section $3.1$), we must check the following:
\begin{itemize}
\item To check that $\nabla$ is quasi-exceptional, note that $\text{Hom}^{\bullet}(\nabla_{\lambda}, \nabla_{\lambda'}) = 0$ if $\lambda \preceq \lambda'$ (and are distinct), since in this case $\lambda' \notin \widetilde{conv}(\lambda)$ and we may apply Proposition \ref{hom}; this proposition also implies that $\text{Hom}^{<0}(\nabla_{\lambda}, \nabla_{\lambda})=0$ and $\text{End}(\nabla_{\lambda})=\mathbb{C}$.
\item To check that $\triangle$ is a dual quasi-exceptional set, note $\text{Hom}^{\bullet}(\triangle_{\lambda}, \nabla_{\lambda'})=0$ for $\lambda' \preceq \lambda$ using Proposition \ref{hom} (in fact, it is true provided $\lambda \neq \lambda'$); and that $\triangle_{\lambda} \simeq \nabla_{\lambda} (\text{mod } \mathfrak{D}_{\preceq \lambda})$ using Proposition \ref{mod}. 
\item Using Proposition \ref{generates}, one notes that $\nabla_{\lambda}$ generates $D^b(\text{Coh}^G(\mathfrak{N}))$. \end{itemize} \end{proof}

\begin{definition} Let $\mathfrak{D}^{q, \geq 0}, \mathfrak{D}^{q, \leq 0}$ denote the positive and negative subcategories corresponding to the t-structure on $D^b(Coh^G(\mathfrak{N}))$ given by the quasi-exceptional set $\{ \nabla_{\lambda} | \lambda \in \Lambda^+\}$. \end{definition}

\section{The perverse coherent $t$-structure}

\subsection{Recollections} For the reader's convenience, this section is a brief summary of some of the results in \cite{perversecoherent} that we will need in the following section. Let $X$ be an algebraic variety with an action of an algebraic group $G$; we will recount how to construct the ``perverse coherent" t-structure on $D^b(Coh^G(X))$. 

\begin{definition} Let $X^{top}$ denote the set of generic points of closed $G$-invariant subschemes in $X$. For $x \in X^{top}$, let $d(x)$ denote the Krull dimension of the subscheme $\overline{x}$. A perversity function $p$ is a function $X^{top} \rightarrow \mathbb{Z}$; associated to $p$ define the dual perversity function $p'$ by $p'(x) = - \text{dim}(x) - p(x)$. The function $p$ is ``monotone" if $p(x') \geq p(x)$ $\forall$ $x' \in \overline{x}$, and ``co-monotone" if $p'$ is monotone. \end{definition}

\begin{definition} Define $D^{p, \leq 0}, D^{p \geq 0} \subset D^b(Coh^G(X))$ via: \begin{align*} \mathcal{F} \in D^{p, \geq 0} \text{ iff } \forall x \in X^{top}, i_x^!(\mathcal{F}) \in D^{\geq p(x)}(\mathcal{O}_x - mod) \\ \mathcal{F} \in D^{p, \leq 0} \text{ iff } \forall x \in X^{top}, i_x^*(\mathcal{F}) \in D^{\leq p(x)}(\mathcal{O}_x - mod) \end{align*} \end{definition}

\begin{theorem} If the perversity function $p$ is monotone and co-monotone, then $(D^{p, \leq 0}, D^{p \geq 0})$ defines a t-structure on $D^b(Coh^G(X))$, which we call the perverse coherent t-structure. \end{theorem}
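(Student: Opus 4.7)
The plan is to verify the axioms of a $t$-structure directly, using the standard strategy of gluing $t$-structures along a $G$-stable stratification by locally closed subschemes and invoking the Beilinson--Bernstein--Deligne recollement formalism. For a generic point $x \in X^{top}$, the restriction $i_x^*$ lands in $D^b(\mathcal{O}_x\text{-mod})$, where $\mathcal{O}_x$ is the local ring at $x$; the conditions defining $D^{p,\leq 0}$ and $D^{p,\geq 0}$ are then just the standard $t$-structure on $D^b(\mathcal{O}_x\text{-mod})$ shifted by $p(x)$. Thus on each individual ``stratum'' (the open complement of a $G$-stable closed subscheme whose generic points have strictly higher codimension than $x$) the truncation functors are built from the standard $t$-structure, shifted by $p$. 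The task is to glue these pieces together across all strata in a compatible way to obtain a $t$-structure on all of $D^b(\text{Coh}^G(X))$.

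I would proceed by Noetherian induction on the support. Let $Z \subset X$ be a closed $G$-invariant subscheme, and let $j\colon X \setminus Z \hookrightarrow X$, $i\colon Z \hookrightarrow X$ be the complementary open and closed embeddings. The adjoint functors $(i_*, i^*, i^!)$ and $(j_!, j^*, j_*)$ form a recollement of $D^b(\text{Coh}^G(X))$ out of $D^b(\text{Coh}^G(Z))$ and $D^b(\text{Coh}^G(X\setminus Z))$; Grothendieck--Serre duality is what supplies $i^!$ and ensures the adjunctions hold in the coherent setting. By the BBD gluing theorem, given $t$-structures on $D^b(\text{Coh}^G(Z))$ and $D^b(\text{Coh}^G(X\setminus Z))$ one obtains a glued $t$-structure on $D^b(\text{Coh}^G(X))$. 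Taking $Z$ to be a suitable $G$-invariant closed subscheme (e.g.\ the closure of a minimal non-open orbit closure in the support), the $t$-structure on $X\setminus Z$ exists by induction, and the $t$-structure on $Z$ is the perverse coherent $t$-structure for the restricted perversity, which again exists by induction since $\dim Z < \dim X$. The base case is a single point, where the perverse coherent $t$-structure is just a shift of the standard one.

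The role of the two hypotheses is the content of the gluing step. The glued categories $D^{p,\leq 0}$ and $D^{p,\geq 0}$ described in the theorem differ a priori from the formulas produced by BBD gluing, which use $j_!$ and $j_*$ (not just $j^*$). The monotonicity of $p$ is exactly what is needed to verify that $j_!(D^{p,\leq 0}_{X\setminus Z}) \subseteq D^{p,\leq 0}_X$ with the ``fiberwise'' definition, because passing from a generic point of $X\setminus Z$ to one of $Z$ can only increase the relevant cohomological amplitude if $p$ is increasing along specialization. Dually, the co-monotonicity of $p$ (equivalently, monotonicity of $p'$) ensures $j_*(D^{p,\geq 0}_{X \setminus Z}) \subseteq D^{p,\geq 0}_X$, so that the positive half is also preserved. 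These two containments are precisely what makes the fiberwise definition agree with the BBD glued one, and they are the reason both conditions on $p$ are needed.

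The main obstacle I anticipate is verifying the vanishing $\text{Hom}(\mathcal{F}, \mathcal{G}[-1]) = 0$ for $\mathcal{F} \in D^{p,\leq 0}$ and $\mathcal{G} \in D^{p,\geq 0}$, because a priori the categories $D^{p,\leq 0}$ and $D^{p,\geq 0}$ are defined by pointwise conditions at generic points of \emph{all} $G$-invariant subschemes, not just those in a fixed stratification, and one must show the conditions are strong enough to control morphisms in the full derived category. The monotonicity hypothesis is what bridges this gap: it guarantees that if the conditions hold at the generic points of the maximal strata, they automatically propagate to all points of $X^{top}$, so the gluing argument applied to any refinement of a stratification produces the same pair $(D^{p,\leq 0}, D^{p,\geq 0})$. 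Once this coherence across stratifications is in place, the axioms of a $t$-structure follow from the BBD machinery applied inductively, and the heart (perverse coherent sheaves) is Noetherian and inherits a nice abelian structure, which is how one recovers the setup used in the remainder of Section 4.
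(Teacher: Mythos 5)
The paper does not actually prove this theorem: it is stated in the ``Recollections'' subsection as a quotation of results from \cite{perversecoherent}, so the comparison here is with the argument in that reference. Your sketch correctly identifies the overall shape of that argument (Noetherian induction on supports, checking the conditions stratum by stratum, monotonicity controlling the negative half and co-monotonicity the positive half), but it rests on a step that fails: the recollement $(j_!, j^*, j_*)$, $(i^*, i_*, i^!)$ does not exist on $D^b(Coh^G(X))$. There is no extension-by-zero functor $j_!$ for coherent sheaves on an open subscheme; $Rj_*$ of a bounded coherent complex on $X\setminus Z$ is in general neither coherent nor bounded; and for a closed embedding $i$ that is not a regular immersion, $Li^*$ is unbounded below and $Ri^!$ unbounded above on $D^b(Coh)$. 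So one cannot ``invoke the Beilinson--Bernstein--Deligne gluing theorem'' and then check that the fiberwise definition matches the glued one --- the glued $t$-structure is not even defined in this setting. This is precisely the difficulty that makes the coherent case genuinely harder than the constructible case, and it is where the real work in \cite{perversecoherent} lies.

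The actual proof works around this as follows. One first passes to the quasi-coherent category $D^b_{coh}(QCoh^G(X))$ (complexes with coherent cohomology), where $Rj_*$ and the local-cohomology functor $R\Gamma_Z = i_*Ri^!$ do exist; the distinguished triangle $R\Gamma_Z(\mathcal{G}) \to \mathcal{G} \to Rj_*j^*\mathcal{G}$ substitutes for the recollement when proving the orthogonality $\Hom(\mathcal{F},\mathcal{G}[-1])=0$ by induction on supports. The existence of truncation triangles is then the delicate point: one constructs the truncation over an open stratum, extends it to $X$ by choosing a coherent subobject of a quasi-coherent extension, and uses monotonicity and co-monotonicity to verify that the resulting objects still satisfy the pointwise amplitude bounds at the generic points of the smaller strata. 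Your intuition about which hypothesis controls which half of the $t$-structure is right, but as written the proposal assumes functors that do not exist, so the gluing step --- the heart of the proof --- is not actually carried out.
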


\begin{proposition} The irreducible objects in the heart $\mathcal{P} = D^{p, \leq 0} \cap D^{p \geq 0}$ of the perverse t-structure are parametrized by a $G$-orbit $\mathcal{O}$ on $X$, and an irreducible $G$-equivariant vector bundle $\mathcal{L}$ on $\mathcal{O}$; the corresponding object is denoted $j_{!*} \mathcal{L}[p(\mathcal{O})]$. \end{proposition}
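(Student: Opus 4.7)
The plan is to mimic the classical classification of simple perverse sheaves (a la Beilinson--Bernstein--Deligne), transcribed into the equivariant coherent setting. The key input is that the perversity $p$ is both monotone and co-monotone, which by the previous theorem ensures that $(D^{p,\leq 0}, D^{p,\geq 0})$ really is a $t$-structure; this gives truncation functors ${}^p\tau_{\leq 0}$, ${}^p\tau_{\geq 0}$ and an abelian heart $\mathcal{P}$. What one needs to produce is, for each $G$-orbit $\mathcal{O} \subset X$ with inclusion $j\colon \mathcal{O}\hookrightarrow X$ and each irreducible $G$-equivariant vector bundle $\mathcal{L}$ on $\mathcal{O}$, a canonical simple object $j_{!*}\mathcal{L}[p(\mathcal{O})]$, and to show that every simple of $\mathcal{P}$ is obtained uniquely in this way.

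First I would construct the intermediate extension. Write $i\colon \overline{\mathcal{O}}\setminus\mathcal{O}\hookrightarrow X$ for the complementary closed inclusion. Starting from the $G$-equivariant coherent sheaf $\mathcal{L}$ on $\mathcal{O}$, form $Rj_*\mathcal{L}$ and $j_!\mathcal{L}$ in $D^b(\mathrm{Coh}^G(X))$ (the latter being defined up to pro-completion issues that are standard in the perverse coherent setting; monotonicity guarantees the perverse truncations remain in the bounded derived category). A direct check using the definitions of $D^{p,\leq 0}$ and $D^{p,\geq 0}$ via $i_x^*$ and $i_x^!$ at the generic point $x$ of $\mathcal{O}$ shows that $\mathcal{L}[p(\mathcal{O})]$ lies in $j^*(D^{p,\leq 0})\cap j^!(D^{p,\geq 0})$. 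I then define $j_{!*}\mathcal{L}[p(\mathcal{O})]$ as the image, taken in the abelian category $\mathcal{P}$, of the natural morphism
\[
{}^p\!H^0\bigl(j_!\mathcal{L}[p(\mathcal{O})]\bigr)\longrightarrow {}^p\!H^0\bigl(Rj_*\mathcal{L}[p(\mathcal{O})]\bigr).
\]
Its defining property is that $j^* j_{!*}\mathcal{L}[p(\mathcal{O})]\cong\mathcal{L}[p(\mathcal{O})]$ and that $j_{!*}\mathcal{L}[p(\mathcal{O})]$ has neither a nonzero subobject nor a nonzero quotient supported on $\overline{\mathcal{O}}\setminus\mathcal{O}$; these are the formal consequences of taking ``image'' of the adjunction map in a $t$-structure.

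Next I would prove simplicity. If $\mathcal{K}\subset j_{!*}\mathcal{L}[p(\mathcal{O})]$ is a nonzero subobject in $\mathcal{P}$, then $j^*\mathcal{K}$ is a subobject of $\mathcal{L}[p(\mathcal{O})]$ in the heart over $\mathcal{O}$; since $\mathcal{L}$ is irreducible as a $G$-equivariant vector bundle, either $j^*\mathcal{K} = 0$ (in which case $\mathcal{K}$ is supported on $\overline{\mathcal{O}}\setminus\mathcal{O}$, contradicting the no-subobject property above) or $j^*\mathcal{K} = \mathcal{L}[p(\mathcal{O})]$, and a symmetric argument on the quotient forces $\mathcal{K} = j_{!*}\mathcal{L}[p(\mathcal{O})]$. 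Conversely, given any simple $S\in\mathcal{P}$, let $\mathcal{O}$ be the open $G$-orbit in the support of $S$; then $j^*S$ is a nonzero simple object of the heart of the induced perverse $t$-structure on $\mathcal{O}$, which is just $\mathrm{Coh}^G(\mathcal{O})[p(\mathcal{O})]$. Hence $j^*S\cong\mathcal{L}[p(\mathcal{O})]$ for a unique irreducible $G$-equivariant vector bundle $\mathcal{L}$ on $\mathcal{O}$, and by the universal property of $j_{!*}$ the natural map $S\to j_{!*}\mathcal{L}[p(\mathcal{O})]$ is an isomorphism.

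The main obstacle is verifying that the constructions of $Rj_*$, $j_!$ and their perverse truncations produce objects in the bounded derived category of equivariant coherent sheaves, rather than merely in a pro- or ind-completion; this is precisely where the monotone and co-monotone hypotheses on $p$ are used, and it is the reason we can appeal to the preceding theorem establishing the $t$-structure. Once that machinery is in place, the rest of the argument is a direct transcription of the classical simple object classification for perverse sheaves, so I would simply cite \cite{perversecoherent} for the technical details and record only the statement here.
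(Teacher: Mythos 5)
This proposition is stated in the paper's ``Recollections'' subsection and is not proved there at all: it is quoted from \cite{perversecoherent}, so there is no argument of the paper to compare yours against. Your outline is essentially the standard BBD-style classification, and in its broad strokes (characterize $j_{!*}\mathcal{L}$ as the extension with no nonzero sub- or quotient-object supported on $\overline{\mathcal{O}}\setminus\mathcal{O}$, deduce simplicity from irreducibility of $\mathcal{L}$, and recover every simple object from the open orbit in its support) it does match the proof in the cited reference.

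There is, however, one step where the transcription from the constructible setting genuinely breaks down, and it is exactly the step you wave at. In the coherent setting the functor $j_!$ does not exist: extension by zero of a coherent sheaf is not coherent, and $j^*$ has no left adjoint on $D^b(\mathrm{Coh}^G(X))$, so the definition of $j_{!*}\mathcal{L}$ as the image of ${}^p\!H^0(j_!\mathcal{L}[p(\mathcal{O})])\to{}^p\!H^0(Rj_*\mathcal{L}[p(\mathcal{O})])$ is not available, and it is not the case that ``monotonicity guarantees the perverse truncations remain bounded'' -- there is nothing to truncate. Bezrukavnikov's actual construction avoids $j_!$ entirely: the intermediate extension is characterized as the unique object $\mathcal{F}\in\mathcal{P}$ with $j^*\mathcal{F}\cong\mathcal{L}[p(\mathcal{O})]$ whose $i^*$ and $i^!$ along the closed complement land in strictly negative (resp.\ strictly positive) perverse degrees, and it is produced by iterated perverse truncation of $Rj_*\mathcal{L}[p(\mathcal{O})]$ (which does lie in $D^b$) along a filtration of the boundary. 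Moreover this classification requires the perversity to be \emph{strictly} monotone and strictly comonotone -- a stronger hypothesis than the one under which the $t$-structure exists, though it is satisfied by the middle perversity used later in the paper. Since you explicitly defer the technical details to \cite{perversecoherent}, your proposal is acceptable as a citation of a known result, but the portion you treat as routine is precisely where the coherent case differs from the classical one.
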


\subsection{Comparing the quasi-exceptional and perverse coherent $t$-structures on $D^b(\text{Coh}^G(\mathfrak{N}))$}

Consider the perverse coherent $t$-structure on $D^b(\text{Coh}^G(\mathfrak{N}))$ arising from the middle perversity (i.e. for the orbit $\mathbb{O}_{\mu, \nu} \subset \mathfrak{N}, p(\mathbb{O}_{\mu, \nu}) = - \frac{\text{dim}(\mathbb{O}_{\mu, \nu})}{2}$); let $\mathfrak{D}^{p, \geq 0}, \mathfrak{D}^{p, \leq 0}$ denote the negative and positive subcategories as defined above, and let $\mathcal{P} = \mathfrak{D}^{p, \geq 0} \cap \mathfrak{D}^{p, \leq 0}$ denote the core of this $t$-structure. The goal of this section is to prove that the perverse coherent $t$-structure on $D^b(\text{Coh}^G(\mathfrak{N}))$ coincides with the $t$-structure arising from the quasi-exceptional set $\nabla$. 

\begin{proposition} $\nabla_{\lambda} \in \mathcal{P}$ for all $\lambda \in \Lambda$. \end{proposition}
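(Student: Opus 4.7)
The plan is to check the two defining conditions $\nabla_\lambda \in \mathfrak{D}^{p,\leq 0}$ and $\nabla_\lambda \in \mathfrak{D}^{p,\geq 0}$ separately, with the second reduced to the first via Grothendieck-Serre duality. The key geometric input I would use at the outset is semi-smallness of the exotic Springer resolution $\pi: \widetilde{\mathfrak{N}} \to \mathfrak{N}$: for a generic point $x$ of $\overline{\mathbb{O}_{\mu,\nu}}$, the fibre satisfies $\dim \pi^{-1}(x) \leq (\dim \mathfrak{N} - \dim \mathbb{O}_{\mu,\nu})/2$. This is the exotic analogue of classical Springer semi-smallness and is alluded to in the introduction; it can be extracted from Kato's description of the fibres together with the fact that $\dim \widetilde{\mathfrak{N}} = \dim \mathfrak{N}$.

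For $\nabla_\lambda \in \mathfrak{D}^{p,\leq 0}$: by properness of $\pi$ and base change, for $x \in \mathfrak{N}^{top}$ lying in $\mathbb{O}_{\mu,\nu}$ the complex $i_x^* R\pi_* \mathcal{O}_{\widetilde{\mathfrak{N}}}(\lambda)$ is computed in terms of the cohomology of the line bundle restricted to the generic fibre, which is a projective variety of dimension $\dim \pi^{-1}(x)$. Its cohomology therefore vanishes in degrees above $\dim \pi^{-1}(x)$; combining the semi-smallness bound with the shift by $d = \dim \mathfrak{N}/2$, we get $i_x^* \nabla_\lambda \in D^{\leq p(\mathbb{O}_{\mu,\nu})}(\mathcal{O}_x\text{-mod})$, as required.

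For $\nabla_\lambda \in \mathfrak{D}^{p,\geq 0}$: the middle perversity is self-dual (one has $p'(\mathbb{O}) = -\dim(\mathbb{O}) - p(\mathbb{O}) = p(\mathbb{O})$), and it is a formal consequence of the definitions that the anti-autoequivalence $\mathcal{S}$ of Grothendieck-Serre duality swaps $\mathfrak{D}^{p,\leq 0}$ and $\mathfrak{D}^{p',\geq 0}$; hence when $p=p'$ it swaps $\mathfrak{D}^{p,\leq 0}$ and $\mathfrak{D}^{p,\geq 0}$. Since by Lemma \ref{duality} we have $\mathcal{S}(\nabla_\lambda) = \nabla_{-\lambda-2\theta}$, which again belongs to $\mathfrak{D}^{p,\leq 0}$ by the previous paragraph, we conclude $\nabla_\lambda \in \mathfrak{D}^{p,\geq 0}$. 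The main obstacle is verifying semi-smallness of $\pi$ carefully from Kato's orbit data; once that is in hand, the rest of the argument is a formal consequence of base change and duality.
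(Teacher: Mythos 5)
Your proposal is correct and follows essentially the same route as the paper: reduce to the condition $\nabla_\lambda \in \mathfrak{D}^{p,\leq 0}$ via Grothendieck--Serre duality (using $\mathcal{S}(\nabla_\lambda)=\nabla_{-\lambda-2\theta}$ and self-duality of the middle perversity), then deduce that condition from semi-smallness of $\pi$ together with the bound on the cohomological amplitude of $R\pi_*$ over a point with fibre dimension at most $\tfrac{1}{2}\operatorname{codim}(\mathbb{O}_{\mu,\nu})$. The only cosmetic difference is that the paper invokes the vanishing $R^i f_*\mathcal{F}=0$ for $i$ exceeding the fibre dimension (Hartshorne III.11.2) at the generic point rather than phrasing it via base change to the fibre, and, like you, it asserts semi-smallness of the exotic Springer resolution without a detailed verification.
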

\begin{proof} Using Lemma \ref{duality}, $\mathcal{S}(\nabla_{\lambda}) = \nabla_{-\lambda - \epsilon_1-\cdots-\epsilon_n}$, and the perverse coherent $t$-structure is self-dual with respect to Grothendieck-Serre duality, it suffices to prove just the first of two conditions defining the perverse coherent $t$-structure (here $\iota_{gen, \mu, \nu}: (\mathbb{O}_{\mu, \nu})_{\text{gen}} \hookrightarrow \mathfrak{N}$ denotes the inclusion of the generic point of the orbit $\mathbb{O}_{\mu, \nu}$): \begin{align*} \iota_{\text{gen}, \mu, \nu}^*(\nabla_{\lambda}) \in \mathfrak{D}^{\leq p(\mathbb{O}_{\mu, \nu})}(((\mathbb{O}_{\mu, \nu})_{\text{gen}})-mod) \\ \iota_{\text{gen}, \mu, \nu}^*(R\pi_* \mathcal{O}_{\widetilde{\mathfrak{N}}}(\lambda)) \in \mathfrak{D}^{\leq p(\mathbb{O}_{\mu, \nu})+d}(((\mathbb{O}_{\mu, \nu})_{\text{gen}})-mod) \end{align*}
Note first that $\pi: \widetilde{\mathfrak{N}} \rightarrow \mathfrak{N}$ is a semi-small resolution of singularities, i.e. $\text{dim}(\pi^{-1}(x)) \leq \frac{1}{2} \text{codim} (\mathbb{O}_{\mu, \nu}) =  p(\mathbb{O}_{\mu, \nu})+d$. Note also that from \cite{hartshorne}, Chapter $3$, Corollary $11.2$, if $f: X \rightarrow Y$ is a projective morphism of Noetherian schemes, such that $\text{dim}(f^{-1}(y)) \leq r$ for all $y \in Y$, then for $i>r$, $R^{i}f_* \mathcal{F} = 0$ for all $\mathcal{F} \in \text{Coh}(X)$. Since $\iota_{\text{gen}, \mu, \nu}^*$ is an exact functor, the required result follows. 
\end{proof}

\begin{proposition} The perverse coherent t-structure on $D^b(\text{Coh}^G(\mathfrak{N}))$ coincides with the quasi-exceptional $t$-structure corresponding to the set $\{ \nabla_{\lambda} | \lambda \in \Lambda \}$. \end{proposition}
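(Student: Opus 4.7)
The plan is to compare the two $t$-structures by showing mutual containment of their positive and negative halves, which by a standard lemma forces equality.

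First I would recall the characterization of the quasi-exceptional $t$-structure: by the construction in Section~3.1, we have $\mathfrak{D}^{q,\geq 0} = \langle \nabla_\lambda[d] : \lambda \in \Lambda^+, d \leq 0 \rangle$ and $\mathfrak{D}^{q,\leq 0} = \langle \triangle_\lambda[d] : \lambda \in \Lambda^+, d \geq 0 \rangle$. By the previous proposition, $\nabla_\lambda \in \mathcal{P} \subseteq \mathfrak{D}^{p,\geq 0}$ for all $\lambda \in \Lambda$, and in particular for all $\lambda \in \Lambda^+$. Since $\mathfrak{D}^{p,\geq 0}$ is, by construction as one half of a $t$-structure, closed under extensions and under the shift $[d]$ for $d \leq 0$, it contains all of $\langle \nabla_\lambda[d] : \lambda \in \Lambda^+, d \leq 0\rangle$, giving $\mathfrak{D}^{q,\geq 0} \subseteq \mathfrak{D}^{p,\geq 0}$. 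Dually, $\triangle_\lambda = \nabla_{w_0 \cdot \lambda} \in \mathcal{P} \subseteq \mathfrak{D}^{p,\leq 0}$, and $\mathfrak{D}^{p,\leq 0}$ is closed under extensions and under $[d]$ for $d \geq 0$, so $\mathfrak{D}^{q,\leq 0} \subseteq \mathfrak{D}^{p,\leq 0}$.

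Next I would invoke the following general fact about $t$-structures: if $(\mathfrak{D}_1^{\leq 0}, \mathfrak{D}_1^{\geq 0})$ and $(\mathfrak{D}_2^{\leq 0}, \mathfrak{D}_2^{\geq 0})$ are two $t$-structures on the same triangulated category satisfying $\mathfrak{D}_1^{\leq 0} \subseteq \mathfrak{D}_2^{\leq 0}$ and $\mathfrak{D}_1^{\geq 0} \subseteq \mathfrak{D}_2^{\geq 0}$, then they coincide. Indeed, since $\mathfrak{D}_i^{\geq 1} = (\mathfrak{D}_i^{\leq 0})^{\perp}$ (objects with no Hom \emph{from} $\mathfrak{D}_i^{\leq 0}$), the first inclusion gives $\mathfrak{D}_2^{\geq 1} \subseteq \mathfrak{D}_1^{\geq 1}$, i.e.\ $\mathfrak{D}_2^{\geq 0} \subseteq \mathfrak{D}_1^{\geq 0}$; combined with the assumed inclusion in the opposite direction, equality on the $\geq 0$ side follows, and then equality on the $\leq 0$ side is automatic.

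Applying this with $1 = q$ and $2 = p$, the two $t$-structures coincide. The only non-trivial input is that $\nabla_\lambda$ and $\triangle_\lambda$ both lie in $\mathcal{P}$; the $\nabla_\lambda$ case is the previous proposition, and the $\triangle_\lambda$ case is just the $\lambda \mapsto w_0 \cdot \lambda$ reindexing of the same statement (alternatively, one may invoke Grothendieck--Serre duality from Lemma \ref{duality} together with the fact that the middle perversity $t$-structure is self-dual). The main thing to be careful about is the shift conventions and the closure properties of $\mathfrak{D}^{p,\geq 0}$ and $\mathfrak{D}^{p,\leq 0}$, but these are immediate from the axioms of a $t$-structure, so no serious obstacle arises.
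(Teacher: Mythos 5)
Your proof is correct and follows essentially the same route as the paper: both arguments deduce $\mathfrak{D}^{q,\geq 0} \subseteq \mathfrak{D}^{p,\geq 0}$ and $\mathfrak{D}^{q,\leq 0} \subseteq \mathfrak{D}^{p,\leq 0}$ from the previous proposition (applied to $\nabla_\lambda$ and $\triangle_\lambda = \nabla_{w_0\cdot\lambda}$) together with the closure properties of the halves of a $t$-structure, and then conclude equality from the $t$-structure axioms. Your explicit orthogonality argument for why mutual containment forces equality is a useful elaboration of the paper's appeal to ``the axioms of a $t$-structure,'' but it is the same argument.
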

\begin{proof} Above we have proven that $\nabla_{\lambda} \in \mathfrak{D}^{p, \geq 0}$, so it follows that $\nabla_{\lambda}[d] \in \mathfrak{D}^{p, \geq 0}$ for $d \leq 0$. Since $\mathfrak{D}^{q, \geq 0}$ is generated by the objects $\nabla_{\lambda}[d]$, it follows that $\mathfrak{D}^{q, \geq 0} \subseteq \mathfrak{D}^{p, \geq 0}$. Since the above proof also gives us $\triangle_{\lambda} \in \mathfrak{D}^{p, \leq 0}$, it follows that $\triangle_{\lambda}[d] \in \mathfrak{D}^{p, \leq 0}$ for $d \geq 0$; hence $\mathfrak{D}^{q, \leq 0} \subseteq \mathfrak{D}^{p, \leq 0}$. It now follows from the axioms of a t-structure that we have equality in the above inclusions, i.e. two t-structures coincide. \end{proof}

The irreducible objects in the heart $\mathcal{P}$ of the perverse coherent $t$-structure are parametrized by an irreducible $G$-equivariant vector bundle $\mathcal{L}$ on an orbit $\mathbb{O}_{\mu, \nu}$, with the corresponding perverse coherent sheaf being given by $j_{!*} \mathcal{L}[- \frac{\text{dim}(\mathbb{O}_{\mu, \nu})}{2}]$. If $x \in \mathbb{O}_{\mu, \nu}$, then $\mathbb{O}_{\mu, \nu} \simeq G/G^x$; and the irreducible $G$-equivariant vector bundles are given by $G \times_{G^x} V$, where $V$ is an irreducible representation of the isotropy group $G^x$. Thus:

\begin{proposition} The irreducibles in $\mathcal{P}$ are indexed by pairs $(\mathbb{O}_{\mu, \nu}, V)$ of an orbit $\mathbb{O}_{\mu, \nu}$ and an irreducible representation $V$ of $G^{x}$; with the irreducible object given by $IC_{\mathbb{O}_{\mu, \nu}, V} := j_{!*} (G \times_{G^x} V)[- \frac{\text{dim}(\mathbb{O}_{\mu, \nu})}{2}]$. The bijection between costandard objects and irreducible objects in $\mathcal{P}$ gives a bijection between $\Lambda$ and pairs $(\mathbb{O}_{\mu, \nu}, V)$ of an orbit and an irreducible representation of the isotropy group. \end{proposition}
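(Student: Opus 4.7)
The plan is to assemble three ingredients that have already been established in the paper. First, by the general theory of perverse coherent sheaves recalled in Section 4.1 (specifically the last proposition of that subsection), the simple objects in the heart $\mathcal{P}$ are parametrized by pairs $(\mathcal{O}, \mathcal{L})$ where $\mathcal{O}$ is a $G$-orbit on $\mathfrak{N}$ and $\mathcal{L}$ is an irreducible $G$-equivariant vector bundle on $\mathcal{O}$. For the middle perversity $p(\mathbb{O}_{\mu,\nu}) = -\tfrac{1}{2}\dim \mathbb{O}_{\mu,\nu}$, the simple object corresponding to $(\mathbb{O}_{\mu,\nu}, \mathcal{L})$ is the intermediate extension $j_{!*}\mathcal{L}[-\tfrac{1}{2}\dim\mathbb{O}_{\mu,\nu}]$. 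Choosing a basepoint $x \in \mathbb{O}_{\mu,\nu}$ identifies $\mathbb{O}_{\mu,\nu}$ with $G/G^x$, under which irreducible $G$-equivariant vector bundles correspond bijectively to irreducible finite-dimensional representations $V$ of the isotropy group $G^x$ via $V \mapsto G \times_{G^x} V$. This is the standard descent/induction equivalence for $G$-equivariant coherent sheaves on a homogeneous space, and gives the stated formula for $IC_{\mathbb{O}_{\mu,\nu}, V}$.

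Second, I would invoke the preceding proposition of Section 4.2, which shows that the perverse coherent $t$-structure on $D^b(\text{Coh}^G(\mathfrak{N}))$ coincides with the quasi-exceptional $t$-structure determined by $\nabla = \{\nabla_\lambda \mid \lambda \in \Lambda^+\}$. Thus $\mathcal{P}$ is also the heart of the quasi-exceptional $t$-structure, and it is a quasi-hereditary category with costandard objects $\nabla'_\lambda = \tau_{\leq 0}\nabla_\lambda$ indexed by $\Lambda^+$.

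Third, I would apply the final proposition of Section 3.1, which asserts that in the heart of any quasi-exceptional $t$-structure there is a canonical bijection between the costandard objects $\{\nabla'_\lambda\}$ and the simple objects $\{S_\lambda\}$, realized concretely by $S_\lambda := \mathrm{im}(\theta_\lambda : \triangle'_\lambda \to \nabla'_\lambda)$ with $S_\lambda \hookrightarrow \nabla'_\lambda$ being a costandard hull. Since the costandards are indexed by $\Lambda^+$ and the simples are, by the first ingredient above, indexed by pairs $(\mathbb{O}_{\mu,\nu}, V)$, composing the two indexings produces the desired bijection $\Lambda^+ \leftrightarrow \mathbb{O}$.

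The main obstacle here is really only bookkeeping: verifying that the two parametrizations of simple objects in $\mathcal{P}$ — one from perverse coherent theory, one from quasi-hereditary structure — refer to the same set of objects, which is immediate once the two $t$-structures are identified. No further geometric input is needed beyond what has been established.
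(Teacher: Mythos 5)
Your proposal is correct and follows essentially the same route as the paper: the paper likewise obtains the parametrization of simples in $\mathcal{P}$ from the general theory of perverse coherent sheaves together with the identification of irreducible $G$-equivariant vector bundles on $\mathbb{O}_{\mu,\nu}\simeq G/G^x$ with irreducible representations of $G^x$, and then composes this with the costandard--simple bijection in the quasi-hereditary heart, using the already-established coincidence of the two $t$-structures. (One small remark: the paper's statement says ``$\Lambda$'' where it should say $\Lambda^+$, and your write-up correctly uses $\Lambda^+$, the index set of the quasi-exceptional collection.)
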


\bibliographystyle{plain}

\end{document}